\documentclass[11 pt]{amsart}
\usepackage{amsmath}
\usepackage{latexsym,mathrsfs}
\usepackage{mathdots}
\usepackage{color}
\usepackage{enumitem} 
\usepackage{bbm}
\usepackage{tikz}
\usepackage{tikz-cd}
\usepackage[T1]{fontenc}
 \usepackage[utf8]{inputenc}
 \usepackage{blkarray} 
 \usepackage{mathtools} 
 \usepackage{longtable}
 \usepackage{multicol} 
  \usepackage{fdsymbol} 
 \usepackage{hyperref}
\usepackage[capitalise]{cleveref}

\usetikzlibrary{positioning} 
\usetikzlibrary{decorations.pathreplacing}
\usetikzlibrary{arrows, decorations.markings} 
\pgfarrowsdeclarecombine{twotriang}{twotriang}{stealth}{stealth}%
{stealth}{stealth}
\tikzset{arrow/.style={-stealth}}
\tikzset{arrowshorter/.style={-stealth, shorten <=2pt, shorten >=2pt}}
\tikzset{arrowmuchshorter/.style={-stealth, shorten <=7pt, shorten >=6pt}}
\tikzset{mono/.style={>-stealth}} 
\tikzset{epi/.style={-twotriang}} 
\tikzset{twoarrowlonger/.style={double,double distance=1.5pt,
shorten <=5pt,shorten >=6pt,
decoration={markings,mark=at position -4pt with {\arrow[scale=1.75]{>}}},
preaction={decorate}}} 

\tikzset{twoarrow/.style={double,double distance=1.5pt,
shorten <=6pt,shorten >=7pt, 
decoration={markings,mark=at position -4pt
with {\arrow[scale=1.75]{>}}},
preaction={decorate} 
}
}
\tikzset{%
    symbol/.style={%
        draw=none,
        every to/.append style={%
            edge node={node [sloped, allow upside down, auto=false]{$#1$}}}
    }
}

\tikzset{mapstikz/.style={-stealth, 
decoration={markings,mark=at position 0pt with {\arrow[scale=0.5]{|}}}, preaction={decorate}}}
\usetikzlibrary{backgrounds,shapes,arrows,calc,patterns} 

\pgfdeclarelayer{background}
\pgfsetlayers{background,main}

\theoremstyle{plain}   
\newtheorem{thm}{Theorem}[section] 
\makeatletter\let\c@thm\c@thm\makeatother

\makeatletter\let\c@cor\c@thm\makeatother
\newtheorem{lem}{Lemma}[section]
\makeatletter\let\c@lem\c@thm\makeatother
\newtheorem{prop}{Proposition}[section]
\makeatletter\let\c@prop\c@thm\makeatother

\makeatletter\let\c@claim\c@thm\makeatother

\makeatletter\let\c@conjecture\c@thm\makeatother

\newtheorem*{unnumberedtheorem}{Theorem}

\theoremstyle{definition}

\newtheorem{defn}{Definition}[section]
\makeatletter\let\c@defn\c@thm\makeatother

\makeatletter\let\c@const\c@thm\makeatother
\newtheorem{notn}{Notation}[section]
\makeatletter\let\c@notn\c@thm\makeatother

\makeatletter\let\c@convention\c@thm\makeatother

\theoremstyle{remark}

\newtheorem{rmk}{Remark}[section]
\makeatletter\let\c@rmk\c@thm\makeatother

\makeatletter\let\c@ex\c@thm\makeatother

\makeatletter\let\c@observation\c@thm\makeatother

\makeatletter\let\c@warning\c@thm\makeatother

\makeatletter\let\c@digression\c@thm\makeatother

\makeatletter\let\c@answ\c@thm\makeatother

\makeatletter\let\c@answ\c@thm\makeatother

\makeatletter\let\c@aside\c@thm\makeatother

\makeatletter
\let\c@equation\c@thm
\numberwithin{equation}{section}
\makeatother

\newcommand{\newrefformat}[2]{}

\crefname{lem}{Lemma}{Lemmas}
\crefname{thm}{Theorem}{Theorems}
\crefname{defn}{Definition}{Definitions}
\crefname{notn}{Notation}{Notations}
\crefname{const}{Construction}{Constructions}
\crefname{prop}{Proposition}{Propositions}
\crefname{rmk}{Remark}{Remarks}
\crefname{cor}{Corollary}{Corollaries}
\crefname{equation}{Display}{Displays}
\crefname{ex}{Example}{Examples}
\crefname{thmalph}{Theorem}{Theorems}
\crefname{answ}{Answer}{Answers}
\crefname{crzcond}{Crazy Condition}{Crazy Conditions}

\newcommand{\cA}{\mathcal{A}}

\newcommand{\cC}{\mathcal{C}}
\newcommand{\cD}{\mathcal{D}}
\newcommand{\cE}{\mathcal{E}}

\newcommand{\cG}{\mathcal{G}}

\newcommand{\cI}{\mathcal{I}}

\newcommand{\cM}{\mathcal{M}}

\newcommand{\cO}{\mathcal{O}}
\newcommand{\cP}{\mathcal{P}}

\newcommand{\cS}{\mathcal{S}}

\newcommand{\glboundary}{\partial^{\mathrm{gl}}}
\newcommand{\cat}{\cC\!\mathit{at}}
\newcommand{\set}{\cS\!\mathit{et}}
\newcommand{\sset}{\mathit{s}\set}

 \renewcommand*\colon{%
   \nobreak
   \mskip2mu
   \mathpunct{}%
   \nonscript
   \mkern-\thinmuskip
   {:}%
   \mskip6mu plus1mu
   \relax
 }   

 \newcommand*\noloc{%
   \nobreak
   \mskip6mu plus1mu
   \mathpunct{}%
   \nonscript
   \mkern-\thinmuskip
   {:}%
   \mskip2mu
   \relax
}

\DeclareMathOperator{\colim}{colim}
 \DeclareMathOperator{\id}{id}

\author{Viktoriya Ozornova}
\address{Max Planck Institute for Mathematics, Bonn, Germany}
\email{viktoriya.ozornova@mpim-bonn.mpg.de}

\author{Martina Rovelli}
\address{
University of Massachusetts Amherst, Amherst, USA}
\email{mrovelli@umass.edu} 
\address{
University of Ottawa, 
Ottawa,
Canada
}
\email{mrovelli@uottawa.ca}

\author{Tashi Walde}
\address{Universität Regensburg, Regensburg, Germany}
\email{tashi.walde@ur.de}

\DeclareMathOperator{\Cone}{Cone}
\usepackage{boxedminipage}
\tikzcdset{arrow style=tikz,
           diagrams={>=stealth}
           }

\usepackage[margin=1.4in]{geometry}

\title{About the contractibility of the walking coinductive equivalence}

\begin{document}

\maketitle

\begin{abstract}
We study the marked simplicial set obtained as the Roberts--Street nerve of the walking coinductive equivalence. We show that it is a non-contractible saturated complicial set for which all of its finite truncations are contractible. When regarding saturated complicial sets as a model for right $(\infty,\infty)$-categories, it represents a concrete and explicit example of a right $(\infty,\infty)$-category that is itself non-contractible, but whose reflection to a left $(\infty,\infty)$-category is contractible.
\end{abstract}

\addtocontents{toc}{\protect\setcounter{tocdepth}{1}}
\tableofcontents

\section*{Introduction}
The inclusion
\[
\mathscr Cat_{(\infty,n)}\hookrightarrow\mathscr Cat_{(\infty,n+1)}
\]
of the $\infty$-category of $(\infty,n)$-categories into that of $(\infty,n+1)$-categories admits both a left and a right adjoint, giving rise to two towers. It was established in \cite{GH-hyper,ORW1} that the limit of the tower of left adjoints, denoted $\mathscr Cat_{(\infty,\infty)}^{L}$, is a localization of the limit of the tower of right adjoints, denoted $\mathscr Cat_{(\infty,\infty)}^{R}$, through an adjunction
\begin{equation} \label{localization} \tag{$\star$} L\colon \mathscr Cat_{(\infty,\infty)}^{R}\rightleftarrows \mathscr Cat_{(\infty,\infty)}^{L}\noloc R. \end{equation}
Despite the existence of this localization, a precise understanding of the information it discards remains a central challenge.

Verity introduced \emph{complicial sets} \cite{VerityComplicialI}, and later \emph{saturated complicial sets} \cite{EmilyNotes,or,RiehlVerityBook,VeritySlides}; the theory has since been further developed in \cite{Loubaton4}. These are simplicial sets endowed with a class of marked simplices satisfying axioms that ensure the marked simplices represent weakly invertible cells when the marked simplicial set is regarded as a higher category. In particular, saturated complicial sets in which every simplex of dimension greater than $n$ is marked model $(\infty,n)$-categories.

The homotopy theory of saturated complicial sets for which all simplices are marked in dimension higher than $n$, encoded by the model category $m\sset_{(\infty,n)}$, is a model for $(\infty,n)$-categories $\mathscr Cat_{(\infty,n)}$, and the homotopy theory of saturated complicial sets, encoded by the model category $m\sset_{\mathrm{cmp,sat}}$, was intended as a model for the right limit of the tower of $(\infty,n)$-categories $\mathscr Cat_{(\infty,\infty)}^{R}$.

Assuming this identification, the reflector $L$ in the localization \eqref{localization} is computed levelwise by applying the thinification functors $\mathrm{th}_n$. More precisely, it sends a saturated complicial set $X$ to the compatible family
$\{\mathrm{th}_nX\}_{n\geq0}$,
where $\mathrm{th}_nX$ is obtained from $X$ by freely marking every simplex of dimension greater than $n$.

In past work \cite{ORsurvey,HLOR}, an explicit $\omega$-category $\cE$ which is gaunt, contractible, and non-trivial was constructed. In this paper we study its Roberts--Street nerve $N^{RS}\cE$; that is, its nerve $N\cE$ when endowed with the marking of simplices for which the top dimensional simplex is an identity in $\cE$. We establish as \cref{CompleteTheorem} (combining \cref{NEfibrant,NEnotContractible,LnEcontractible}) the following properties of $N^{\mathrm{RS}}\cE$:

\begin{unnumberedtheorem}
Let $\cE$ denote the walking coinductive equivalence from \cite{ORsurvey,HLOR}, let $N\cE$ be its simplicial nerve, and let $N^{\mathrm{RS}}\cE$ be its Roberts--Street nerve. Then:
\begin{enumerate}
  \setcounter{enumi}{-1}
    \item$N^{\mathrm{RS}}\cE$ is a saturated complicial set;
    \item $N^{\mathrm{RS}}\cE$ is not contractible in the homotopy theory of saturated complicial sets;
    \item for every $n\geq0$, the thinification $\mathrm{th}_nN^{\mathrm{RS}}\cE$ is contractible in the homotopy theory of saturated complicial sets.
\end{enumerate}
\end{unnumberedtheorem}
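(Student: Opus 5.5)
The plan is to prove the three claims separately. Throughout I use the description of $\cE$ from \cite{ORsurvey,HLOR}: it is the $\omega$-category freely generated by two objects $x,y$, a $1$-cell $f\colon x\to y$ and a $1$-cell $g\colon y\to x$, together with an infinite tower of cells witnessing that $f$ and $g$ are coinductively inverse. Concretely, there are $2$-cells $\eta\colon \id_x\Rightarrow gf$ and $\epsilon\colon fg\Rightarrow \id_y$, these are in turn coinductive equivalences with chosen inverses, and so on in every dimension. In particular every cell of dimension $\ge1$ of $\cE$ is a coinductive equivalence, yet no non-identity cell is an identity, since $\cE$ is gaunt and non-trivial.

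For (0), I would invoke the Roberts--Street/Verity characterization: the Roberts--Street nerve of any strict $\omega$-category is a complicial set, the thin simplices being those whose top cell is an identity (Verity's theorem). It then remains to check saturation. A marked simplicial set is saturated if every simplex that is part of a marked $\Delta[3]_{eq}$-type configuration (the 2-out-of-6 condition) is itself marked. In the nerve of a strict $\omega$-category, such a configuration forces the relevant cell to be an identity exactly when the category is gaunt, meaning that every equivalence cell is an identity. Here I need the inductive (not coinductive) notion of equivalence: the saturation condition for $N^{RS}$ is detected by the \emph{strict} invertibility of cells with strictly invertible witnesses, and $\cE$ has no strictly invertible non-identity cells. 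So I would reduce saturation to the statement that $\cE$ has no non-identity strictly invertible cells, which follows from the freeness of $\cE$ (a normal-form or length argument) as in \cite{HLOR}.

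For (1), I would argue by contradiction. If $N^{RS}\cE$ were contractible, then, since it is fibrant by (0), the map $N^{RS}\cE\to\Delta[0]$ would be a trivial fibration in $m\sset_{\mathrm{cmp,sat}}$. Lifting against the boundary inclusion $\partial\Delta[1]\to\Delta[1]$ (unmarked) with endpoints $x,x$ would give a $1$-simplex, and then higher lifts against $\Delta[n]\to\Delta[n]_t$-type inclusions would force unmarked simplices to admit marked fillers. A simpler route: a trivial fibration has the right lifting property against $\Delta[1]\to\Delta[1]^\sharp$, the inclusion of the unmarked into the marked 1-simplex. Applying this to $f$ would force $f$ to be marked, that is, to be an identity in $\cE$, which is false. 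This argument is the cleanest, and I would use it.

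For (2), the argument is as follows. In $\mathrm{th}_nN^{RS}\cE$ all simplices above dimension $n$ are marked. I would show by downward induction that all simplices become equivalent to marked ones. The $n$-cells of $\cE$ are equivalences whose witnesses now lie in the marked range, so they are invertible in the $(\infty,n)$-sense. Hence $\mathrm{th}_nN^{RS}\cE$ is fibrant-equivalent to the $(\infty,n)$-category presented by the $n$-truncation, in which $f$ is an equivalence with the finite stack of witnesses. Contractibility then reduces to showing that the $n$-category ``walking $n$-equivalence'' is contractible. I would construct an explicit anodyne map $\Delta[0]\to \mathrm{th}_nN^{RS}\cE$ by comparing with the known contractible marked model of the walking equivalence, for instance the $n$-fold iterated saturation of $\Delta[1]^\sharp$, via a map that is a weak equivalence by checking on homotopy categories, or by Whitehead-type reasoning in $m\sset_{(\infty,n)}$. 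The main obstacle is this step, namely producing an honest weak equivalence rather than merely an equivalence of homotopy $n$-categories. I would try to exhibit $\mathrm{th}_nN^{RS}\cE$ as obtained from $\Delta[0]$ by iterated pushouts along saturation/complicial anodyne extensions, attaching the cells of $\cE$ dimension by dimension as free cells together with their witnesses.
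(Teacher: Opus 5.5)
Your arguments for (0) and (1) are essentially the paper's. For (1), lifting the trivial fibration $N^{\mathrm{RS}}\cE\to\Delta[0]$ against $\Delta[1]\to\Delta[1]^\sharp$ at $f$ is a streamlined version of the paper's lift against $N^{\mathrm{RS}}\cE\hookrightarrow\mathrm{th}_0N\cE$.

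For (0), the saturation extensions $\Delta[3]^{\mathrm{eq}}\star\Delta[m]\to\Delta[3]^\sharp\star\Delta[m]$ with $m\geq0$ are not just a 2-out-of-6 condition on edges. So your claim that ``the configuration forces the relevant cell to be an identity'' needs an argument. The paper supplies it as follows:
\begin{enumerate}
\item Such a map lands in $N^{\mathrm{RS}}\mathrm{core}_{4+m}\cE$.
\item There, gauntness gives $N^{\mathrm{RS}}=N^\natural$. This is exactly where your inductive-versus-coinductive remark is needed, since on $\cE$ itself $N^\natural\cE=N^\sharp\cE$.
\item Loubaton's theorem then says $N^\natural$ is saturated.
\end{enumerate}

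The genuine gap is in (2).
\begin{itemize}
\item The reduction of $\mathrm{th}_nN^{\mathrm{RS}}\cE$ to ``the $(\infty,n)$-category presented by the $n$-truncation'' is asserted without any mechanism, and none is available. $N^{\mathrm{RS}}$ does not turn equivalences of $\omega$-categories into weak equivalences; that is precisely (1), applied to $\cE\to[0]$. Moreover, $N^{\mathrm{RS}}\mathrm{tr}_n\cE$ is not even saturated, since $\mathrm{tr}_n\cE$ has non-identity invertible $n$-cells.
\item Your fallback of building $\mathrm{th}_nN^{\mathrm{RS}}\cE$ from $\Delta[0]$ by attaching the generators of $\cE$ along anodyne maps does not describe $N\cE$. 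The nerve is a right adjoint, and $N\cE$ has a simplex for every map $\cO[k]\to\cE$, i.e.\ for every pasting composite.
\item Comparing homotopy $n$-categories does not detect weak equivalences, as you note yourself. Also, $\mathrm{th}_nN^{\mathrm{RS}}\cE$ is not fibrant for $n\geq1$, so any Whitehead-type argument would have to pass through a fibrant replacement you have no control over.
\item Your plan never addresses non-globular simplices, whose top cells go between pasting composites rather than being cells of the tower of witnesses.
\end{itemize}

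The missing idea is to compare $\mathrm{th}_n$ with $\mathrm{th}_0$, not with a model of the walking equivalence, and to run your ``downward induction'' on the nerve itself.

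First, $\mathrm{th}_0N^{\mathrm{RS}}\cE=N^\sharp\cE$ is contractible because $N\cE$ is a contractible Kan complex. A filler for $\partial\Delta[k]\to\Delta[k]$ is a single $k$-cell with prescribed boundary, which exists since $\cE$ is contractible. One then passes through $m\sset_{(\infty,0)}$ and the left Quillen functor $\mathrm{th}_0$.

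Second, each inclusion $\mathrm{th}_{\ell+1}N^{\mathrm{RS}}\cE\hookrightarrow\mathrm{th}_{\ell}N^{\mathrm{RS}}\cE$ is an acyclic cofibration in $m\sset_{\mathrm{cmp,sat}}$:
\begin{itemize}
\item Take an $(\ell+1)$-simplex factoring through $\Sigma^\ell\Delta[1]$. The relevant iterated hom of $\cE$ is again contractible, which yields a $3$-simplex with degenerate diagonals $[0,2],[1,3]$ around the transposed $1$-simplex. One pushes out along $\Sigma^\ell(\Delta[3]^{\mathrm{eq}}\to\Delta[3]^\sharp)$, which is acyclic because $\Sigma^\ell$ is left Quillen.
\item A general $(\ell+1)$-simplex is linked to such a globular one by a chain of Riehl--Verity complicial companions, computed in the iterated homs. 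It is marked by pushouts along suspended thinness extensions $\Sigma^p(\Delta^{t+1}[\ell+2-p]'\to\Delta^{t+1}[\ell+2-p]'')$, organized by a filtration by depth $p$ and type $t$.
\end{itemize}

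Two-out-of-three then finishes. This argument uses only the contractibility of $\cE$, not its particular tower of witnesses.
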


In the intended interpretation of saturated complicial sets as a model for $\mathscr Cat_{(\infty,\infty)}^{R}$,
the object $N^{\mathrm{RS}}\cE$ provides an explicit example in the model of complicial sets of an object that is non-contractible in $\mathscr Cat_{(\infty,\infty)}^{R}$, but whose image $L(N^{\mathrm{RS}}\cE)$ in $\mathscr Cat_{(\infty,\infty)}^{L}$ is contractible. In particular, the example also produces a map $N^{RS}\cE\to\Delta[0]$ that is not an equivalence in $\mathscr Cat_{(\infty,\infty)}^{R}$, but whose image $L(N^{RS}\cE\to\Delta[0])$ in $\mathscr Cat_{(\infty,\infty)}^{L}$ is an equivalence. Another example of a map that is not an equivalence in $\mathscr Cat_{(\infty,\infty)}^{R}$ but whose image in $\mathscr Cat_{(\infty,\infty)}^{L}$ is one was provided in \cite[Construction 4.33]{HL}.
In a similar vein, one could possibly exploit the key constructions and ideas from \cite{Cheng} to show that a to-be-designed saturated complicial set of higher bordisms is not equivalent to an $(\infty,0)$-category, but becomes so after reflecting. The example we provide remains in a sense smaller and more minimal than these alternative approaches.

\subsubsection*{Acknowledgments}
MR is grateful for support from the NSF grant DMS-2203915 and NSERC grant RGPIN-2026-05664.
We are grateful to Dimitri Ara (for discussions around \cref{AcyclicCofOmegaCat0}), Amar Hadzihasanovic (for discussions around
\cref{IteratedSuspensionMarked,SurvivingNonDegenerate}), F\'elix Loubaton (for discussions around \cref{CvsCone}) and Lennart Meier for insightful conversations on this topic.

\subsubsection*{AI use disclosure}
This article contains no AI-generated content.
ChatGPT~5.6 Sol was used with read-only access
for light proofreading in the final stage of manuscript preparation.


\section{About $\omega$-categories}
\label{OmegaCategories}

\subsection{Background on $\omega$-categories}

While we refer the reader to e.g.~\cite{StreetOrientedSimplexes}, \cite{AraMaltsiniotisJoin}, \cite[\textsection14.2]{PolygraphBook} for the detailed definition of an $\omega$-category, we briefly recall the main features here.

The data of a \emph{strict $\omega$-category} (henceforth just called \emph{$\omega$-category})
$\cD$ consists of a sequence of sets $\cD_k$ (for ${k \geq 0})$,
where $\cD_0$ is called the set of \emph{objects} of $\cD$ and $\cD_k$ (for $k>0$)
is the set of \emph{$k$-cells} or \emph{$k$-morphisms} or cells of \emph{dimension} $k$ of $\cD$, together with:
\begin{itemize}[leftmargin=*]
    \item \emph{source} and \emph{target} operators $s_k, t_k \colon \cD_m \to \cD_k$
    for all $m > k \geq 0$;
    \item \emph{identity} operators $\id_m \colon \cD_k \to \cD_{m}$ for all $m\geq k\ge0$;
    \item \emph{composition} operators $\circ_k \colon \cD_m \times_{\cD_k} \cD_m \to \cD_m$
    defined for all $m > k \geq 0$ and all pairs of $m$-cells $(g, f)$ such that $s_k(g) = t_k(f)$.
\end{itemize}
This data is subject to appropriate conditions of associativity, unitality and interchange.

An \emph{$\omega$-functor} $F \colon \cD \to \cD'$ between $\omega$-categories $\cD$ and $\cD'$
is a sequence of maps $F_k \colon \cD_k \to \cD'_k$
(for $k \geq 0$)
that preserves source, target, identity, and composition.
We denote by $\omega\cat$ (resp.~$n\cat$ for $n\geq0$) the category of $\omega$-categories and $\omega$-functors (resp.~$n$-categories and $n$-functors), as considered e.g.~in \cite[\textsection14.2]{PolygraphBook}, \cite{AraMaltsiniotisJoin}, \cite{StreetOrientedSimplexes}.
There is a full inclusion $n\cat\hookrightarrow\omega\cat$.

We collect here the specific notations for classical $\omega$-categories that will be used throughout the paper.

\begin{notn}
  \label{not:notable-omega-cats}
We consider the following $\omega$-categories.
\begin{itemize}[leftmargin=*]
\item We denote by $\cI$ the \emph{walking $1$-isomorphism}, and more generally for $k\geq0$ by $\Sigma^k\cI$ the \emph{walking $(k+1)$-isomorphism}.
\item For $k\geq0$, we denote by $\cC[k]$ the \emph{walking $k$-cell} and by $\partial\cC[k]$ its \emph{boundary}.
\item For $k\geq0$, we denote by $\cO[k]$ the \emph{$k$-th oriental} (see~e.g.~\cite{StreetOrientedSimplexes} for more details).
\end{itemize}
\end{notn}

We collect here the notations for some classical specific constructions on $\omega$-categories that will be used throughout the paper.

\begin{notn}
\label{TruncationsCat}
Let $\cD$ be an $\omega$-category.
\begin{itemize}[leftmargin=*]
    \item For $n\geq0$, the $n$-category $\mathrm{core}_n\cD$ is the $n$-th \emph{core} of $\cD$ from \cite[\textsection1.2]{AraMaltsiniotisJoin} (under the notation $\tau^b_{\leq n}\cD$). It is obtained from $\cD$ by forgetting all morphisms in dimensions higher than $n$. The $n$-th core defines a functor $\mathrm{core}_n\colon\omega\cat\to n\cat$. 
    \item For $n\geq0$, the $n$-category $\mathrm{tr}_n\cD$ is the $n$-th \emph{truncation} of $\cD$ from \cite[\textsection1.2]{AraMaltsiniotisJoin} (under the notation $\tau^i_{\leq n}\cD$). It is obtained from $\cD$ by identifying two $n$-morphisms whenever there exists a zig-zag of $(n+1)$-morphisms between them (and otherwise forgetting the morphisms in dimension higher than $n$ and leaving the ones in dimension $<n$ unchanged).
    The $n$-th truncation defines a functor $\mathrm{tr}_n\colon\omega\cat\to n\cat$.
\end{itemize}
\end{notn}

\begin{rmk}
\label{TruncationsAdjunctionCat}
As discussed in \cite[\textsection1.2]{AraMaltsiniotisJoin}, for $n\geq0$, the canonical inclusion $I_n\colon n\cat\hookrightarrow\omega\cat$, the truncation and core constructions from \cref{TruncationsCat} form adjunctions
\[
\mathrm{tr}_n\colon \omega\cat\rightleftarrows n\cat\noloc I_n\quad\text{ and }\quad I_n\colon n\cat\rightleftarrows\omega\cat\noloc\mathrm{core}_n.
\]
\end{rmk}

\begin{rmk}
\label{ChaosAdjunctionCat}
As noted in \cite[\textsection1.2]{AraMaltsiniotisJoin},
for $n\geq0$, the core construction from \cref{TruncationsCat} fits into an adjunction
\[
\mathrm{core}_n\colon \omega\cat\rightleftarrows n\cat\noloc\mathrm{ch}_n.
\]
Roughly speaking, the right adjoint $\mathrm{ch}_n\colon n\cat\to\omega\cat$ associates with
each $n$-category $\cD$ an $\omega$-category $\mathrm{ch}_n\cD$ obtained from $\cD$ by adding a unique $(n+1)$-morphism between any two parallel $n$-morphisms in $\cD$.
In particular, the core construction preserves colimits.
\end{rmk}
    
\begin{notn}
\label{SuspensionCat}
Let $\cD$ be an $\omega$-category.
\begin{itemize}[leftmargin=*]
    \item The $\omega$-category $\Cone\cD\coloneq\cD\star[0]$ is the \emph{join} of $\cD$ with the terminal $\omega$-category $[0]$
    constructed in \cite[\textsection6]{AraMaltsiniotisJoin}. Although the construction is involved, we will only use that $\Cone\cO[k]\cong\cO[k+1]$ for $k\geq0$ and that the join construction defines a cocontinuous functor $\mathrm{Cone}\colon \omega\cat\to{^{[0]/}\omega\cat}$.
    \item The $\omega$-category $\Sigma\cD$ is the \emph{two-point suspension} of $\cD$ from \cite[\textsection2.2]{ORquillen} (cf.~also \cite[\textsection B.6.5]{AraMaltsiniotisJoin} for a variant of the same construction). It has two objects $\bot,\top$, and homs given by
    \[(\Sigma\cD)(\bot,\bot)=(\Sigma\cD)(\top,\top)=\cC[0],\quad(\Sigma\cD)(\bot,\top)=\cD,\quad(\Sigma\cD)(\top,\bot)=\varnothing.\]
    Combining \cite[Corollaire B.5.6, Corollaire B.6.6]{AraMaltsiniotisJoin}, we obtain a natural isomorphism
    of $\omega$-categories
    \[\Sigma\cD\cong(\cD\star[0])/\cD.\]
    The suspension construction defines a cocontinuous functor $\Sigma\colon \omega\cat\to{^{\partial\cC[1]/}\omega\cat}$.
    \item Given objects $d,d'$ in $\cD$, the $\omega$-category $\cD(d,d')$ is the \emph{hom} $\omega$-category in $\cD$ from $d$ to $d'$ from e.g.~\cite[\textsection1.1]{HLOR}.  The set of $k$-cells of $\cD(d,d')$ is given by
    \[
    (\cD(d,d'))_k=\left({^{\partial\cC[1]/}\omega\cat}\right)((\Sigma\cC[k],\bot,\top),(\cD,d,d')).
    \]
    Roughly speaking, the set of $k$-cells of $\cD(d,d')$ is given by the $(k+1)$-cells of $\cD$ whose $0$-dimensional source and target are $d$ and $d'$, respectively.
    The hom construction defines a functor $(-)(-,-)\colon ^{\partial\cC[1]/}\omega\cat\to\omega\cat$.
\end{itemize}
\end{notn}

\begin{rmk}
\label{SuspensionAdjunctionCat}
As mentioned e.g.~in \cite[Proposition 1.1]{HLOR}, the suspension-hom pair forms an adjunction
\[
\Sigma\colon\omega\cat\rightleftarrows{^{\partial\cC[1]/}\omega\cat}\noloc(-)(-,-).
\]
\end{rmk}

We now consider iterated versions of suspension and hom:

\begin{prop}
\label{IteratedSuspensionAdjunctionCat}
Let $\ell\geq0$. The adjunction from \cref{{SuspensionAdjunctionCat}} induces an adjunction
\[\Sigma^{\ell}\colon\omega\cat
\rightleftarrows
{}^{\partial\cC[\ell]/}\omega\cat\noloc(-)^{(\ell)}(-),\]
where ${}^{\partial\cC[\ell]/}\omega\cat$ denotes the category of $\omega$-categories under $\partial\cC[\ell]$.
\end{prop}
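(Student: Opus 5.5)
We argue by induction on $\ell$, composing adjunctions. For $\ell=0$ we have $\partial\cC[0]=\varnothing$, so ${}^{\partial\cC[0]/}\omega\cat\cong\omega\cat$; we take $\Sigma^0$ and $(-)^{(0)}(-)$ to be the identity. For $\ell=1$ we have $\partial\cC[1]=\cC[0]\amalg\cC[0]$, and the statement is exactly \cref{SuspensionAdjunctionCat}.

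For the inductive step, we first upgrade the suspension adjunction to slice categories. Given any $\omega$-category $\cA$, the adjunction of \cref{SuspensionAdjunctionCat} induces an adjunction
\[
\Sigma\colon{}^{\cA/}\omega\cat\rightleftarrows{}^{\Sigma\cA/}\omega\cat\noloc(-)(-,-).
\]
This is the general fact that an adjunction $F\dashv G\colon\cC\rightleftarrows\cD$ induces $F\colon{}^{A/}\cC\rightleftarrows{}^{FA/}\cD\noloc G'$. Here $G'$ sends $FA\to D$ to its adjunct $A\to GD$, and the slicing is compatible because $\Sigma$ lands in ${}^{\partial\cC[1]/}\omega\cat$. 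Concretely, an object of ${}^{\Sigma\cA/}\omega\cat$ is an $\omega$-functor $\Sigma\cA\to\cD$. Such a functor picks out objects $d,d'$, so it is an object of ${}^{\partial\cC[1]/}\omega\cat$ together with a map $\Sigma\cA\to(\cD,d,d')$ under $\partial\cC[1]$. By adjunction this map is the same as a functor $\cA\to\cD(d,d')$.

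The key input is then the isomorphism
\[
\Sigma\partial\cC[\ell]\cong\partial\cC[\ell+1]
\]
of $\omega$-categories, compatible with the inclusions $\Sigma\partial\cC[\ell]\hookrightarrow\Sigma\cC[\ell]\cong\cC[\ell+1]$, where $\Sigma\partial\cC[0]=\Sigma\varnothing=\partial\cC[1]$. We verify it using the description of $\Sigma$ in \cref{SuspensionCat}, namely two objects with hom equal to the input, together with $\partial\cC[\ell+1]$ having two objects and hom $\partial\cC[\ell]$ from $\bot$ to $\top$. One could also argue by cocontinuity of $\Sigma$ as a functor to ${}^{\partial\cC[1]/}\omega\cat$, though colimits in the slice are computed relative to $\partial\cC[1]$, so the direct description is safer. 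We also need that $\cC[\ell]$ is the suspension of $\cC[\ell-1]$, which is used implicitly when $\Sigma\cC[k]$ appears in the definition of homs.

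With this in hand we define
\[
\Sigma^{\ell+1}\colon\omega\cat\xrightarrow{\Sigma^\ell}{}^{\partial\cC[\ell]/}\omega\cat\xrightarrow{\Sigma}{}^{\Sigma\partial\cC[\ell]/}\omega\cat\cong{}^{\partial\cC[\ell+1]/}\omega\cat.
\]
Its right adjoint $(-)^{(\ell+1)}(-)$ is the composite of right adjoints: it sends $\partial\cC[\ell+1]\to\cD$, that is, a pair of parallel $\ell$-cells $x,y$ of $\cD$, to the iterated hom. Since a composite of adjunctions is an adjunction, this completes the induction.

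The main obstacle is checking carefully that $\Sigma$ identifies $\partial\cC[\ell]$ with $\partial\cC[\ell+1]$ compatibly with the basepoint structure. Once that is settled, the rest is formal.
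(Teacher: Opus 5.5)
Your proposal is correct and follows essentially the same route as the paper: the paper's proof invokes \cref{IteratedAdjunction}, which inducts on $\ell$ by composing the $\ell$-fold adjunction with the suspension adjunction sliced under $\Sigma^\ell\varnothing$ and then identifying ${}^{\Sigma(\Sigma^\ell\varnothing)/}({}^{\partial\cC[1]/}\omega\cat)$ with ${}^{\Sigma^{\ell+1}\varnothing/}\omega\cat$. This is exactly your inductive step, with the identification $\partial\cC[\ell]\cong\Sigma^\ell\varnothing$ playing the role of your $\Sigma\partial\cC[\ell]\cong\partial\cC[\ell+1]$. The only real difference is that the paper states the formal part once for an arbitrary endofunctor with an initial object, so it can reuse it verbatim for marked simplicial sets and for Quillen adjunctions.
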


\begin{proof}
This is an instance of \cref{IteratedAdjunction}, applied to
$\cC=\omega\cat$, $c_0=\varnothing$, $F=\Sigma$, and $G=(-)(-,-)$, using the suspension-hom adjunction of \cref{SuspensionAdjunctionCat}
and the identification $\partial\cC[\ell]\cong\Sigma^\ell\varnothing$.
\end{proof}

\begin{lem}
\label{IteratedAdjunction}
Let $F\colon\cC\to\cC$ be a functor, and $c_0$ be an initial object of $\cC$. If $F$ induces an adjunction
\begin{equation}
  \label{eq:original-ell=-1-adjunction}
  F\colon
  \cC \simeq {}^{c_0/}\cC
  \rightleftarrows
  {}^{F(c_0)/}\cC
  \noloc G
\end{equation}
then for all $\ell\geq0$ there is an adjunction
\[
F^\ell\colon\cC\simeq c_0/\cC
\rightleftarrows
{}^{F(c_0)/}\cC
\rightleftarrows
{}^{F^2(c_0)/}\cC
\rightleftarrows
\cdots
\rightleftarrows
{}^{F^\ell(c_0)/}\cC\noloc G^\ell.
\]
\end{lem}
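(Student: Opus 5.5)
The plan is to construct the adjunctions by induction on $\ell$ and then compose them. The key observation is that an adjunction $F\colon\cC\simeq{}^{c_0/}\cC\rightleftarrows{}^{F(c_0)/}\cC\noloc G$ induces, for every object $c$ of $\cC$, an adjunction on the corresponding coslice categories
\[
F\colon {}^{c/}\cC\rightleftarrows {}^{F(c)/}\cC\noloc G',
\]
where the left adjoint sends $(c\to x)$ to $(F(c)\to F(x))$. Applying this with $c=F^{k}(c_0)$ for $k=0,\dots,\ell-1$ gives the individual adjunctions in the displayed chain. Composing the $\ell$ adjunctions yields $F^\ell\dashv G^\ell$, where $G^\ell$ denotes the composite of the induced right adjoints. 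Adjunctions compose, so nothing beyond the existence of each step is needed.

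For the general coslice step, I define the right adjoint as follows. Take an object $(F(c)\xrightarrow{u} y)$ of ${}^{F(c)/}\cC$. Since $F(c)\to F(c)$ is the identity only after choosing $F(c_0)\to F(c)$, we may regard $u$ as an object of ${}^{F(c_0)/}\cC$, by precomposing with $F(!)\colon F(c_0)\to F(c)$. Its image $G(y)$ comes with a map $c\to G(y)$, namely the adjunct of $u$ under the original adjunction. This gives $G'(u)=(c\to G(y))$.

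The hom-set bijection is then checked directly. Maps $F(x)\to y$ under $F(c)$ correspond, via the original bijection $\cC(x,G(y))\cong{}^{F(c_0)/}\cC(F(x),y)$, to maps $x\to G(y)$. The condition of being compatible with $c$ on one side matches the condition of being compatible with $F(c)$ on the other, by naturality of the bijection in $x$ applied to the map $c\to x$.

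The main, though mild, obstacle is bookkeeping. One must confirm that the original bijection is natural in the variable of $\cC\simeq{}^{c_0/}\cC$ (so that precomposition with $c\to x$ commutes with taking adjuncts), and that all the coslice structure maps agree. In particular the structure maps $F^{k}(c_0)\to F^{k+1}(c_0)$ are the $F^k$ of the unique map $c_0\to F(c_0)$, and these are compatible with $F^k(c_0)\to F^k(x)$. This is routine naturality; in the application to \cref{IteratedSuspensionAdjunctionCat} it specialises to the identification $\partial\cC[\ell]\cong\Sigma^\ell\varnothing$.
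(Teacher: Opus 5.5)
Your proposal is correct and follows essentially the same route as the paper: slice the original adjunction under $F^k(c_0)$ and identify the double coslice ${}^{F^{k+1}(c_0)/}({}^{F(c_0)/}\cC)$ with ${}^{F^{k+1}(c_0)/}\cC$, which you do implicitly by precomposing with $F(!)$. Then compose these adjunctions inductively to obtain $F^\ell\dashv G^\ell$. Your explicit description of the sliced right adjoint, together with the hom-set check via naturality in $x$, spells out what the paper summarizes as ``taking the slice of the original adjunction.''
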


\begin{proof}
We show this by induction on $\ell\geq0$. For $\ell=0$, using that $c_0$ is initial, this is just the canonical equivalence
\[
\cC\simeq{}^{c_0/}\cC,
\]
and for $\ell=1$ this is the given adjunction
\eqref{eq:original-ell=-1-adjunction}.
Now, by induction, assume that there is the adjunction
\begin{equation}
\label{eq:induction}
F^\ell\colon\cC\rightleftarrows{}^{F^\ell(c_0)/}\cC\noloc G^\ell.
\end{equation}
Furthermore, by taking the slice of the original adjunction
\eqref{eq:original-ell=-1-adjunction}
under the object $F^\ell(c_0)$,
we obtain an adjunction
\begin{equation}
  \label{eq:sliced-ell-adjunction}
  {}^{F^\ell(c_0)/}\cC
  \rightleftarrows
  {}^{F(F^\ell(c_0))/}\left ({}^{F(c_0)/}\cC\right).
\end{equation}
Using the identification $F(F^\ell(c_0))= F^{\ell+1}(c_0)$, 
we also obtain a canonical equivalence
\begin{equation}
  \label{eq:slice-equivalence-adju}{}^{F(F^\ell(c_0))/}\left({}^{F(c_0)/}\cC\right)
  \simeq
  {}^{F^{\ell+1}(c_0)/}\cC.
\end{equation}
Composing the three adjunctions
\eqref{eq:induction},
\eqref{eq:sliced-ell-adjunction} and
\eqref{eq:slice-equivalence-adju},
we obtain the adjunction
\[
F^{\ell+1}\colon\cC\rightleftarrows{}^{F^{\ell+1}(c_0)/}\cC\noloc G^{\ell+1},
\]
as desired.
\end{proof}

\subsection{Descriptions for cones of cells}

The goal of this subsection is to prove \cref{ConeCell3}, which gives a formula for cone of the $\ell$-cell $\cC[\ell]$ in terms
of the cone of the $(\ell+1)$-cell $\cC[\ell+1]$, for $\ell\geq0$. We will make use of this formula in the proof of \cref{CvsCone}.

\begin{rmk}
By direct inspection, one can see that there is an isomorphism of $\omega$-categories
\[
\Cone\cC[0]\cong\cC[1].
\]    
\end{rmk}

First, we use existing results to record how many cells need to be added to the cones of $\cC[\ell-1]$ to obtain the cone of $\cC[\ell]$.

\begin{lem}
\label{ConeCell1}
For all $\ell>0$, the cone $\Cone\cC[\ell]$ can be expressed as an iterated pushout of the form
\[
\begin{tikzcd}
  \partial\cC[\ell-1]
  \arrow[rr]
  \arrow[d,hook]
  \arrow[drr, phantom, "\pushout", very near end,
  yshift=0.2cm, xshift=0.1cm]
  &
  &
  \Cone\cC[\ell-1]\arrow[d]
  &
  &
  \\
  \cC[\ell-1]
  \arrow[r,hookrightarrow]
  &
  \partial\cC[\ell]
  \arrow[d,hook]
  \arrow[r]
  \arrow[dr, phantom, "\pushout",
  very near end, yshift=0.2cm, xshift=0.1cm]
  &
  \cA[\ell]\arrow[d]
  &
  &
  \\
  &
  \cC[\ell]\arrow[r]
  &
  \cA'[\ell]
  \arrow[d]
  &
  &
  \partial\cC[\ell]\arrow[ll]
  \arrow[d,hook]
  \arrow[dll, phantom, "\text{\reflectbox{$\pushout$}}",
  very near end, yshift=0.2cm, xshift=-0.1cm]
  \\
  &
  &
  \cA''[\ell]\arrow[d]
  &
  \partial\cC[\ell+1]
  \arrow[l]
  \arrow[d,hook]
  &
  \cC[\ell]
  \arrow[l,hookrightarrow]
  \arrow[dll, phantom, "\text{\reflectbox{$\pushout$}}",
  very near end, yshift=0.2cm, xshift=-0.1cm]
  \\
  &
  &
  \Cone\cC[\ell]
  &
  \cC[\ell+1]\arrow[l]
  &
\end{tikzcd}
\]
Moreover, the vertical composite
\[
\Cone\cC[\ell-1]\to\cA[\ell]\to\cA'[\ell]\to\cA''[\ell]\to\Cone\cC[\ell]
\]
is induced by the cotarget map $\cC[\ell-1]\to\cC[\ell]$.
\end{lem}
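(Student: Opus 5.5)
Since $\Cone$ is cocontinuous as a functor $\omega\cat\to{}^{[0]/}\omega\cat$ (\cref{SuspensionCat}), the plan is to write $\cC[\ell]$ as a cell attachment and apply $\Cone$ to it. Concretely, $\cC[\ell]$ is obtained from $\cC[\ell-1]$ in two steps. First glue a second copy of $\cC[\ell-1]$ along $\partial\cC[\ell-1]$ to get $\partial\cC[\ell]$. Then fill with $\partial\cC[\ell]\hookrightarrow\cC[\ell]$. Applying $\Cone$ to these two pushout squares expresses $\Cone\cC[\ell]$ as
\[\Cone\cC[\ell-1]\amalg_{\Cone\partial\cC[\ell-1]}\Cone\cC[\ell-1]\amalg_{\Cone\partial\cC[\ell]}\Cone\cC[\ell].\]
These pushouts are computed in ${}^{[0]/}\omega\cat$; connected pushouts there agree with those in $\omega\cat$. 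This is not yet the desired form, though. The cleaner route is to compare the cone with an explicit polygraphic (computadic) presentation.

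The approach I would actually carry out is via generators. $\cC[\ell]$ is free on a polygraph with generators $x^\pm_k$ for $k<\ell$ and one top generator $x_\ell$, and $\Cone\cC[\ell]$ is again free (Steiner / Ara–Maltsiniotis). Its generators are the old generators $x$, the cone point $v$, and one new generator $x*v$ of dimension $|x|+1$ for each old generator $x$. Passing from $\cC[\ell-1]$ to $\cC[\ell]$ renames $x_{\ell-1}$ as the cotarget generator $x^+_{\ell-1}$, so the map $\Cone\cC[\ell-1]\to\Cone\cC[\ell]$ induced by the cotarget is injective on generators. Exactly four generators are missing from its image:
\begin{itemize}
\item $x^-_{\ell-1}$, of dimension $\ell-1$;
\item $x_\ell$, of dimension $\ell$;
\item $x^-_{\ell-1}*v$, of dimension $\ell$;
\item $x_\ell*v$, of dimension $\ell+1$.
\end{itemize}
For a free $\omega$-category, adding one generator at a time whose boundary already lies in the previous stage is precisely a pushout along $\partial\cC[k]\hookrightarrow\cC[k]$. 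I would add them in the order listed, which matches the four squares in the statement:
\begin{enumerate}
\item $x^-_{\ell-1}$ has boundary $\partial x^+_{\ell-1}$. This is the pushout along $\partial\cC[\ell-1]\to\cC[\ell-1]$, giving $\cA[\ell]$.
\item $x_\ell$ has boundary $x^-_{\ell-1}\cup x^+_{\ell-1}=\partial\cC[\ell]$. This gives $\cA'[\ell]$, and its boundary map is indeed the composite $\partial\cC[\ell]\to\cA[\ell]$ appearing in the statement.
\item $x^-_{\ell-1}*v$ has boundary determined by the cone formulas (Steiner's join of chain complexes). Its boundary consists of $x^-_{\ell-1}$ together with cells built from $\partial x^-_{\ell-1}*v$ and $v$, all already present. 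This gives $\cA''[\ell]$.
\item $x_\ell*v$ has boundary built from $x_\ell$, $x^\pm_{\ell-1}*v$ and $v$. This boundary is a copy of $\partial\cC[\ell+1]$ containing the $\cC[\ell]$ corresponding to $x^-_{\ell-1}*v$, which accounts for the reflected squares. This last pushout yields $\Cone\cC[\ell]$.
\end{enumerate}

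The main obstacle is justifying the claim in steps (3) and (4) that each new generator's source and target are single cells of the right shape in the earlier stage, so that the attaching maps really factor through $\partial\cC[\ell]$ and $\partial\cC[\ell+1]$ as drawn. I would settle this using Steiner's theory: the cones are the $\omega$-categories associated with loop-free unital bases of the join chain complexes, and their atoms are exactly the generators above. The key step is to compute $\langle x*v\rangle^\pm$ by the join formula $\partial(x*v)=\partial x*v\pm x$ (with $\partial v=0$ on the cone side) in dimensions $\ell$ and $\ell+1$, and to check the globularity needed for the boundary to form a $\partial\cC[k]$. The final claim, that the vertical composite is induced by the cotarget, then holds by construction, since the first stage is the image of $\Cone\cC[\ell-1]$ under the cotarget map.
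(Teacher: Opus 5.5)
Your strategy is the paper's. Its proof counts the Steiner basis of $\Cone\cC[\ell]$ and declares that the pushout decomposition follows; you make the four missing generators $x^-_{\ell-1}$, $x_\ell$, $x^-_{\ell-1}*v$, $x_\ell*v$ explicit and attach them in order. Steps (1)--(3) are fine.

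Step (4) fails for even $\ell$, and the failure sits in the computation you deferred by writing $\partial(x*v)=\partial x*v\pm x$. The sign is not free. With $\partial(x*v)=v-x$ for $|x|=0$ and $\partial^2=0$, one gets $\partial(x*v)=\partial x*v+(-1)^{|x|+1}x$. This is the convention for which $\Cone\cO[k]\cong\cO[k+1]$, and it matches the paper's picture of $\Cone\cC[1]$. Hence
\[
\partial(x_\ell*v)=x^+_{\ell-1}*v-x^-_{\ell-1}*v+(-1)^{\ell+1}x_\ell .
\]
For odd $\ell$, the source of $x_\ell*v$ is the single generator $x^-_{\ell-1}*v$, as you claim. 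For even $\ell$, $x_\ell$ lies on the source side, so the source is a composite of $x^-_{\ell-1}*v$ and $x_\ell$. The only face of $x_\ell*v$ that is a single generator is then the target $x^+_{\ell-1}*v$. That cell is the image of $x_{\ell-1}*v$ under the cotarget, so it already lies in the bottom stage $\Cone\cC[\ell-1]$.

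Concretely, for $\ell=2$ take $\alpha\colon f\Rightarrow g\colon a\to b$. The $3$-cell $\alpha*v$ goes from $((b*v)\circ_0\alpha)\circ_1(f*v)$ to $g*v$. You can check this by writing $\cC[2]$ as $\cO[2]$ with the edge $(12)$ collapsed. Then $\Cone\cC[2]$ is $\cO[3]$ with $(12)$ and $(123)$ collapsed, and the target of $(0123)$ becomes the single cell $(013)=g*v$. So no boundary copy of $\cC[2]$ in $\partial\cC[3]$ is sent to the freshly attached $f*v$, and the bottom reflected square cannot be formed.

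Reordering steps (2) and (3) does not help. For even $\ell$, neither new $\ell$-dimensional generator ($x_\ell$ or $x^-_{\ell-1}*v$) is a whole face of $x_\ell*v$. The generators of a free $\omega$-category are intrinsic (they are its indecomposable cells), so any presentation of the stated shape would have to use exactly these cells. Hence, for even $\ell$, the statement as written, with the vertical map induced by the cotarget, is false. The paper's basis count does not check this point either.

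What your argument does prove is a corrected version: use the cotarget for odd $\ell$ and the cosource for even $\ell$. In the even case, steps (1) and (3) then attach $x^+_{\ell-1}$ and $x^+_{\ell-1}*v$, and the latter is exactly the target of $x_\ell*v$. The same correction is needed in \cref{ConeCell2}. It does not affect \cref{ConeCell3} or anything downstream, because there only the canonical retraction $\cC[\ell]\to\cC[\ell-1]$ enters, and it retracts both the cosource and the cotarget.
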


To provide intuition, let's unpack the construction for the case $\ell=1$.

\begin{rmk}
The iterated pushout from \cref{ConeCell1} for $\ell=1$ can be described as
\begin{minipage}[t]{0.5\textwidth}
\begin{flushleft}
 \vspace{0.5cm}
\begin{tikzcd}[column sep=0.2cm, row sep=1.48cm]
  \partial\cC[0]
  \arrow[rr]
  \arrow[d,hook]
  \arrow[drr, phantom, "\pushout", very near end,
  yshift=0.2cm, xshift=0.1cm]
  &
  &
  \Cone\cC[0]\arrow[d]
  &
  &
  \\
  \cC[0]
  \arrow[r,hookrightarrow]
  &
  \partial\cC[1]
  \arrow[d,hook]
  \arrow[r]
  \arrow[dr, phantom, "\pushout",
  very near end, yshift=0.2cm, xshift=0.1cm]
  &
  \cA[1]\arrow[d]
  &
  &
  \\
  &
  \cC[1]\arrow[r]
  &
  \cA'[1]
  \arrow[d]
  &
  &
  \partial\cC[1]\arrow[ll]
  \arrow[d,hook]
  \arrow[dll, phantom, "\text{\reflectbox{$\pushout$}}",
  very near end, yshift=0.2cm, xshift=-0.1cm]
  \\
  &
  &
  \cA''[1]\arrow[d]
  &
  \partial\cC[2]
  \arrow[l]
  \arrow[d,hook]
  &
  \cC[1]\arrow[l,hookrightarrow]
  \arrow[dll, phantom, "\text{\reflectbox{$\pushout$}}",
  very near end, yshift=0.2cm, xshift=-0.1cm]
  \\
  &
  &
  \Cone\cC[1]
  &
  \cC[2]\arrow[l]
  &
\end{tikzcd}
\end{flushleft}
\end{minipage}
\begin{minipage}[t]{0.5\textwidth}
\begin{flushleft}
\begin{align*}
  \Cone\cC[0] = 
        \begin{boxedminipage}[t][2.0cm][t]{0.5\textwidth}
			\begin{tikzcd}[ampersand replacement=\&]
            \& \color{yellow!50!black}{\top} \&\\
            \phantom{\color{magenta!90!red}{\bullet}}
				\&\& \color{red!50!white}{\bullet}
                \arrow[lu, orange, very thick]
			\end{tikzcd}
            \end{boxedminipage}
            \\
	\cA[1] = 
           \begin{boxedminipage}[t][2.0cm][t]{0.5\textwidth}
            \begin{tikzcd}[ampersand replacement=\&]
            \& \color{yellow!50!black}{\top} \&\\
				\color{magenta!90!red}{\bullet}
				\&\& \color{red!50!white}{\bullet}
                \arrow[lu, orange, very thick]
			\end{tikzcd}
            \end{boxedminipage}
            \\
	\cA'[1] =
        \begin{boxedminipage}[t][2.0cm][t]{0.5\textwidth}
			\begin{tikzcd}[ampersand replacement=\&]
            \& \color{yellow!50!black}{\top} \&\\
				\color{magenta!90!red}{\bullet}
                \arrow[rr, blue, very thick, ""{name=D}]
				\&\& \color{red!50!white}{\bullet}
                \arrow[lu, orange, very thick]
			\end{tikzcd}
            \end{boxedminipage}
            \\
    \cA''[1] = 
    \begin{boxedminipage}[t][2.0cm][t]{0.5\textwidth}
			\begin{tikzcd}[ampersand replacement=\&]
            \& \color{yellow!50!black}{\top} \&\\
				\color{magenta!90!red}{\bullet}
                \arrow[ru, green, very thick] 
                \arrow[rr, blue, very thick, ""{name=D}]
				\&\& \color{red!50!white}{\bullet}
                \arrow[lu, orange, very thick]
			\end{tikzcd}
            \end{boxedminipage}
            \\
    \Cone\cC[1] = 
        \begin{boxedminipage}[t][2.0cm][t]{0.5\textwidth}
			\begin{tikzcd}[ampersand replacement=\&]
            \& \color{yellow!50!black}{\top} \&\\
				\color{magenta!90!red}{\bullet}
                \arrow[ru, green, very thick, ""{name=G, below}] 
                \arrow[rr, blue, very thick, ""{name=D}]
				\&\& \color{red!50!white}{\bullet}
                \arrow[lu, orange, very thick, ""{name=O}]
				\ar[
                    Rightarrow,
                    to=O,
                    from=G,
                    shorten >=5pt,
                    shorten <=5pt,
                    cyan,
                    ""
                ]
			\end{tikzcd}
            \end{boxedminipage}
\end{align*}
\end{flushleft}
\end{minipage}
\end{rmk}

We can now prove the lemma:

\begin{proof}[Proof of \cref{ConeCell1}]
By \cite[Th\'eor\`eme~6.29]{AraMaltsiniotisJoin}, we know that $\Cone\cC[\ell]$ is a strong Steiner $\omega$-category (in the sense of \cite[\textsection 2.15]{AraMaltsiniotisJoin}).
As such, it admits a basis in the sense of \cite[Definition~1.21]{AGOR}, which can be explicitly understood in terms of its algebraic model $\lambda\Cone\cC[\ell]$ by \cite[Th\'eor\`eme~2.12]{AraMaltsiniotisJoin}.
Finally, by \cite[\textsection9.1]{AraMaltsiniotisJoin}, we have an explicit description of one such
basis for the algebraic model $\lambda\Cone\cC[\ell]$ of $\Cone\cC[\ell]$.
This basis consists of:
\begin{itemize}
    \item a $0$-cell
    \item for $j=0,\dots,\ell-1$, two $j$-cells and two $(j+1)$-cells,
and
    \item an $\ell$-cell and an $(\ell+1)$-cell.
\end{itemize}
Hence, we see by induction on $\ell>0$ that the basis of $\Cone\cC[\ell]$ is obtained from that of $\Cone\cC[\ell-1]$ by adding exactly
\begin{itemize}
    \item an $(\ell-1)$-cell and an $\ell$-cell, and
    \item an $\ell$-cell and an $(\ell+1)$-cell.
\end{itemize}
The first claim then follows, and it is then straightforward to check that the total composite is the desired induced map.
\end{proof}

The formula above for cones simplifies as follows:

\begin{lem}
\label{ConeCell2}
For all $\ell>0$, the cone $\Cone\cC[\ell]$ can be expressed as an iterated pushout of the form
\[
\begin{tikzcd}
\cC[\ell-1]\arrow[dr, phantom, "\pushout", very near end, yshift=0.2cm, xshift=0.1cm]\arrow[r]\arrow[d,hook]\arrow[d]&\Cone\cC[\ell-1]\arrow[d]&\\
\cC[\ell]\arrow[r]&\cA'[\ell]\arrow[d]&\cC[\ell]\arrow[dl, phantom, "\text{\reflectbox{$\pushout$}}", very near end, yshift=0.2cm, xshift=0.1cm]\arrow[l]\arrow[d,hook]\\
&\Cone\cC[\ell]&\cC[\ell+1]\arrow[l]\\
\end{tikzcd}
\]
Moreover, the vertical composite
\[
\Cone\cC[\ell-1]\to\cA'[\ell]\to\Cone\cC[\ell]
\]
is induced by the cotarget map $\cC[\ell-1]\to\cC[\ell]$.
\end{lem}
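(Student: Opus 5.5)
The plan is to derive \cref{ConeCell2} from \cref{ConeCell1} by collapsing adjacent pushout squares using the pasting law for pushouts. The top portion of the diagram in \cref{ConeCell1} is a composite of two pushout squares: first $\partial\cC[\ell-1]\to\Cone\cC[\ell-1]$ is pushed out along $\partial\cC[\ell-1]\hookrightarrow\cC[\ell-1]\hookrightarrow\partial\cC[\ell]$ to give $\cA[\ell]$. Then $\partial\cC[\ell]\to\cA[\ell]$ is pushed out along $\partial\cC[\ell]\hookrightarrow\cC[\ell]$ to give $\cA'[\ell]$.

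The first step is to reorganize this. I would note that there is a canonical map $\cC[\ell-1]\to\Cone\cC[\ell-1]$: it is the top-left piece of the pushout defining $\cA[\ell]$, since $\cC[\ell-1]\to\partial\cC[\ell]\to\cA[\ell]$ factors as $\cC[\ell-1]$ glued along $\partial\cC[\ell-1]$. More precisely, I would decompose $\partial\cC[\ell]\cong\cC[\ell-1]\amalg_{\partial\cC[\ell-1]}\cC[\ell-1]$ (source and target copies). The inclusion $\cC[\ell-1]\hookrightarrow\partial\cC[\ell]$ appearing in \cref{ConeCell1} is one of these copies, and the other copy is the one freshly adjoined in $\cA[\ell]$. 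Pasting then gives $\cA[\ell]\cong\Cone\cC[\ell-1]\amalg_{\partial\cC[\ell-1]}\cC[\ell-1]$, with the adjoined copy of $\cC[\ell-1]$ being the one not hit by the given map. Consequently the composite $\partial\cC[\ell]\to\cA[\ell]$ restricts on the \emph{other} copy of $\cC[\ell-1]$ to a map $\cC[\ell-1]\to\Cone\cC[\ell-1]$, namely the one determined by the top-right arrow. With this identification, the composite of the two upper squares is exhibited as the single pushout of $\Cone\cC[\ell-1]\leftarrow\cC[\ell-1]\hookrightarrow\cC[\ell]$. To see this, I would compare universal properties: a map out of $\cA'[\ell]$ is a map from $\Cone\cC[\ell-1]$, a map from $\cC[\ell-1]$ agreeing on $\partial\cC[\ell-1]$, and a map from $\cC[\ell]$ whose restriction to $\partial\cC[\ell]$ matches. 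The free copy of $\cC[\ell-1]$ is then determined by the restriction of $\cC[\ell]$, leaving exactly the data of the claimed square.

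The bottom portion is handled in the same way. $\cA''[\ell]$ is obtained by freely adjoining $\cC[\ell]$ along $\partial\cC[\ell]$, and then $\cC[\ell+1]$ is attached along $\partial\cC[\ell+1]\cong\cC[\ell]\amalg_{\partial\cC[\ell]}\cC[\ell]$, where one copy is the fresh one. The same cancellation argument shows that the composite of the two lower squares equals the pushout of $\cA'[\ell]\leftarrow\cC[\ell]\hookrightarrow\cC[\ell+1]$, with $\cC[\ell]\to\cA'[\ell]$ given by the remaining copy. The statement about the vertical composite is inherited directly from \cref{ConeCell1}, because the replaced objects $\cA'[\ell]$ and $\Cone\cC[\ell]$ and the maps between them are unchanged.

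The main obstacle is bookkeeping: correctly identifying which copy (source or target) of the lower-dimensional cell inside each boundary is the freshly adjoined one, so that the cancellation is legitimate. I would check this against the $\ell=1$ picture in the remark following \cref{ConeCell1}, where the new vertex and then the green edge are the freely added cells.
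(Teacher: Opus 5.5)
Your proposal is correct and follows the paper's proof. The paper likewise writes $\partial\cC[\ell]$ and $\partial\cC[\ell+1]$ as two glued copies of the lower cell, uses pushout cancellation to see that the squares with corners $\cC[\ell-1]\to\Cone\cC[\ell-1]$ and $\cC[\ell]\to\cA'[\ell]$ are pushouts, and then pastes them with the cell-attachment squares; your universal-property comparison is this cancellation-plus-pasting argument unpacked.

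One bookkeeping correction concerns which copy is fresh. In the diagram of \cref{ConeCell1}, the copy picked out by the drawn inclusion $\cC[\ell-1]\hookrightarrow\partial\cC[\ell]$ is the freshly adjoined (source) one. It is the other copy, the target, that factors through $\Cone\cC[\ell-1]$, which is exactly why the resulting map $\cC[\ell-1]\to\cC[\ell]$ is the cotarget.
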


To provide intuition, let's unpack the construction for the case $\ell=1$.

\begin{rmk}
The iterated pushout from \cref{ConeCell2} for $\ell=1$ can be described as
\begin{minipage}[t]{0.5\textwidth}
\begin{flushleft}
 \vspace{0.5cm}
\begin{tikzcd}[column sep=1cm, row sep=1.48cm]
  \cC[0]
  \arrow[r]
  \arrow[d,hook]
  \arrow[dr, phantom, "\pushout", very near end,
  yshift=0.2cm, xshift=0.1cm]
  &
  \Cone\cC[0]\arrow[d]
  &
  \\
  \cC[1]
  \arrow[r]
  &
  \cA'[1]\arrow[d]
  &
  \cC[1]
  \arrow[l]
  \arrow[d,hook]
  \arrow[dl, phantom, "\text{\reflectbox{$\pushout$}}",
  very near end, yshift=0.2cm, xshift=-0.1cm]
  \\
  &
  \Cone\cC[1]
  &
  \cC[2]\arrow[l]
\end{tikzcd}
\end{flushleft}
\end{minipage}
 \begin{minipage}[t]{0.5\textwidth}
 \begin{flushleft}
\begin{align*}
  \Cone\cC[0] = 
        \begin{boxedminipage}[t][2.0cm][t]{0.5\textwidth}
			\begin{tikzcd}[ampersand replacement=\&]
            \& \color{yellow!50!black}{\top} \&\\
				\phantom{\color{magenta!90!red}{\bullet}}
				\&\& \color{red!50!white}{\bullet}
                \arrow[lu, orange,  very thick]
			\end{tikzcd}
            \end{boxedminipage}
            \\
	\cA'[1] = 
           \begin{boxedminipage}[t][2.0cm][t]{0.5\textwidth}
            \begin{tikzcd}[ampersand replacement=\&]
            \& \color{yellow!50!black}{\top} \&\\
				\color{magenta!90!red}{\bullet}
                \arrow[rr, blue, very thick, ""{name=D}]
				\&\& \color{red!50!white}{\bullet} 
                \arrow[lu, orange,  very thick]
			\end{tikzcd}
            \end{boxedminipage}
            \\
    \Cone\cC[1] = 
        \begin{boxedminipage}[t][2.0cm][t]{0.5\textwidth}
			\begin{tikzcd}[ampersand replacement=\&]
            \& \color{yellow!50!black}{\top} \&\\
				\color{magenta!90!red}{\bullet}\arrow[ru, green, very thick, ""{name=G, below}] 
                \arrow[rr, blue, very thick, ""{name=D}] 
				\&\& \color{red!50!white}{\bullet} 
                \arrow[lu, orange,  very thick, ""{name=O}]
				\ar[Rightarrow, from=G, to=O, 
                shorten >= 5pt, shorten <= 5pt,  cyan,
                ""]
			\end{tikzcd}
            \end{boxedminipage}
\end{align*}
\end{flushleft}
\end{minipage}
\end{rmk}

We can now prove the lemma:

\begin{proof}[Proof of \cref{ConeCell2}]
Consider the following two commutative diagrams:
\[
\begin{tikzcd}[column sep=small]
  \partial\cC[\ell-1]
  \arrow[r]
  \arrow[d,hook]
  &
  \cC[\ell-1]
  \arrow[r]
  \arrow[d]
  &
  \Cone\cC[\ell-1]\arrow[d]
  \\
  \cC[\ell-1]
  \arrow[r,hookrightarrow]
  &
  \partial\cC[\ell]
  \arrow[d,hook]
  \arrow[r]
  &
  \cA[\ell]\arrow[d]
  \\
  &
  \cC[\ell]\arrow[r]
  &
  \cA'[\ell].
\end{tikzcd}
\qquad
\begin{tikzcd}[column sep=small]
  \partial\cC[\ell]
  \arrow[r]
  \arrow[d,hook]
  &
  \cC[\ell]
  \arrow[r]
  \arrow[d]
  &
  \cA'[\ell]\arrow[d]
  \\
  \cC[\ell]
  \arrow[r,hookrightarrow]
  &
  \partial\cC[\ell+1]
  \arrow[d,hook]
  \arrow[r]
  &
  \cA''[\ell]\arrow[d]
  \\
  &
  \cC[\ell+1]\arrow[r]
  &
  \Cone\cC[\ell]
\end{tikzcd}
\]
We know by \cref{ConeCell1} that the upper rectangles are pushouts, and the left top squares are pushouts by definition.
By pushout cancellation, so are the upper right squares.
By pushout pasting, so are the right rectangles. These are precisely the two pushout squares asserted in the statement. The first claim then follows, and it is then straightforward to check that the total composite is the desired induced map.
\end{proof}

\begin{thm}
\label{ConeCell3}
For all $k\geq0$, the cone $\Cone\cC[k]$ can be expressed as an iterated pushout of the form
\[
  \begin{tikzcd}
\cC[k+2]
\arrow[r]
\arrow[d]
\arrow[dr, phantom, "\pushout", very near end,
yshift=0.2cm, xshift=0.1cm]
&
\Cone\cC[k+1]\arrow[d]
&
\\
\cC[k+1]
\arrow[r]
&
\cA'[k+1]\arrow[d]
&
\cC[k+1]
\arrow[l]
\arrow[d]
\arrow[dl, phantom, "\text{\reflectbox{$\pushout$}}",
very near end, yshift=0.2cm, xshift=0.1cm]
\\
&
\Cone\cC[k]
&
\cC[k]\arrow[l]
\end{tikzcd}
\]
Moreover, the vertical composite
\[
\Cone\cC[k+1]\to\cA'[k+1]\to\Cone\cC[k]
\]
is induced by the canonical retraction $\cC[k+1]\to\cC[k]$.
\end{thm}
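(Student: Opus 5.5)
The plan is to run \cref{ConeCell2} backwards: glue in cells, then collapse them using the retraction. Apply \cref{ConeCell2} with $\ell=k+1>0$. It exhibits $\Cone\cC[k+1]$ through two pushout squares. The first is
\[
\cA'[k+1]\cong\Cone\cC[k]\amalg_{\cC[k]}\cC[k+1],
\]
where $\cC[k]\to\cC[k+1]$ is the cotarget inclusion $\iota$ and $\cC[k]\to\Cone\cC[k]$ is the structure map. The second is
\[
\Cone\cC[k+1]\cong\cA'[k+1]\amalg_{\cC[k+1]}\cC[k+2],
\]
where $\cC[k+1]\to\cC[k+2]$ is the relevant (co)face inclusion $j$. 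The key observation is that each of these inclusions has a retraction, namely the canonical collapse maps $r\colon\cC[k+2]\to\cC[k+1]$ and $r'\colon\cC[k+1]\to\cC[k]$, with $r\circ j=\id$ and $r'\circ\iota=\id$. Since the retraction $\cC[k+1]\to\cC[k]$ is the same map regardless of which boundary inclusion one uses, $r\circ j=\id$ holds for $j$ either the source or target inclusion.

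The first step is to identify the top square of the statement. I take the map $\cC[k+2]\to\Cone\cC[k+1]$ to be the one from the second pushout above, and $\cC[k+2]\to\cC[k+1]$ to be $r$. By pushout pasting, pushing out $\Cone\cC[k+1]=\cA'[k+1]\amalg_{\cC[k+1]}\cC[k+2]$ along $r$ gives
\[
\cA'[k+1]\amalg_{\cC[k+1]}\cC[k+2]\amalg_{\cC[k+2]}\cC[k+1]\cong\cA'[k+1]\amalg_{\cC[k+1]}\cC[k+1]\cong\cA'[k+1].
\]
The middle identification uses that the composite $\cC[k+1]\xrightarrow{j}\cC[k+2]\xrightarrow{r}\cC[k+1]$ is the identity. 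So the top square is a pushout with corner $\cA'[k+1]$. The map $\cC[k+1]\to\cA'[k+1]$ in it is the one from \cref{ConeCell2}, and the map $\Cone\cC[k+1]\to\cA'[k+1]$ is the canonical collapse.

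The second step treats the lower square the same way. Using $\cA'[k+1]\cong\Cone\cC[k]\amalg_{\cC[k]}\cC[k+1]$ and $r'\circ\iota=\id$, pushing out along $r'\colon\cC[k+1]\to\cC[k]$ yields $\Cone\cC[k]$. The map $\cC[k]\to\Cone\cC[k]$ in this square is again the structure map.

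Finally, I check the claim about the vertical composite $\Cone\cC[k+1]\to\cA'[k+1]\to\Cone\cC[k]$. Since $\Cone$ is a functor, it suffices to show this composite agrees with $\Cone(r'')$, where $r''\colon\cC[k+1]\to\cC[k]$ is the canonical retraction. By the universal property of the pushout description of $\Cone\cC[k+1]$ from \cref{ConeCell2}, it is enough to compare the two maps on the pieces $\Cone\cC[k]$, $\cC[k+1]$ and $\cC[k+2]$. On $\Cone\cC[k]$, the vertical composite of \cref{ConeCell2} is $\Cone(\iota)$, and $\Cone(r'')\circ\Cone(\iota)=\id$, matching the collapse. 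On the glued cells $\cC[k+1]$ and $\cC[k+2]$, one must check that $\Cone(r'')$ sends them to the images of $r'$ and $r'\circ r$ respectively.

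This last verification is the main obstacle, because it requires knowing explicitly which cells of $\Cone\cC[k+1]$ the maps from $\cC[k+1]$ and $\cC[k+2]$ pick out. I would settle it using the Steiner basis description invoked in the proof of \cref{ConeCell1}, following \cite[\S9.1]{AraMaltsiniotisJoin}. There, $\Cone(r'')$ acts on basis elements by sending the two new $(k+1)$-cells and the new $(k+2)$-cell to identities on the corresponding cells of $\Cone\cC[k]$. This matches precisely the effect of collapsing along $r$ and $r'$. Both maps therefore agree on generators, and hence agree everywhere.
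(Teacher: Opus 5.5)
Your derivation of the two pushout squares is the paper's proof. You apply \cref{ConeCell2} with $\ell=k+1$ and push out along the retractions $r\colon\cC[k+2]\to\cC[k+1]$ and $r'\colon\cC[k+1]\to\cC[k]$, so that pasting reduces to a pushout along $r\circ j=\id$ (resp.\ $r'\circ\iota=\id$). The paper packages this pasting step as \cref{lem:retract-pushout} and leaves the identification of the vertical composite as a ``straightforward check''. Keep in mind that the two maps $\cC[k+1]\to\cA'[k+1]$ in the statement are different. The one in the lower square is the first attaching map $\psi$ of \cref{ConeCell2}, and the one in the upper square is the second attaching map $\phi$. For $k=0$, in the paper's pictures, $\psi$ picks the blue arrow and $\phi$ picks the composite of blue and orange.

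Your sketch of that check is not accurate as stated, and the difference between $\psi$ and $\phi$ is exactly where it goes wrong. The restriction of $[r',\id]\circ[\id,r]$ to $\cC[k+2]$ is $[r',\id]\circ\phi\circ r$. It is not $r'\circ r$ followed by the structure map $\cC[k]\to\Cone\cC[k]$.

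Likewise, $\Cone(r')$ (your $\Cone(r'')$) does not send both new $(k+1)$-generators to identities. Let $a$ be the top generator of $\cC[k+1]$, $x^-_k$ its source, and $y$ the top generator of $\cC[k]$. In the Steiner basis, the generators of $\Cone\cC[k+1]$ not coming from $\Cone\cC[k]$ are $x^-_k$, $a$, $x^-_k\star\top$ and $a\star\top$. Then $\Cone(r')$ acts as follows:
\begin{itemize}
\item it sends $x^-_k$ to $y$;
\item it sends $a$ to an identity;
\item it sends $x^-_k\star\top$ to $y\star\top$, which is the non-identity top generator of $\Cone\cC[k]$ (for $k=0$, the green arrow goes to the orange one);
\item it sends $a\star\top$ to the identity on $y\star\top$.
\end{itemize}
The collapse does the same thing. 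Collapsing $\cC[k+2]$ along $r$ identifies $x^-_k\star\top$ with the cell picked by $\phi$, and $[r',\id]$ sends that cell to $y\star\top$, not to an identity. With this corrected bookkeeping, your generator-by-generator comparison goes through.
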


To provide intuition, let's unpack the construction for the case $k=0$.

\begin{rmk}
The iterated pushout from \cref{ConeCell1} for $k=0$
can be described as
\begin{minipage}[t]{0.5\textwidth}
\begin{flushleft}
 \vspace{0.5cm}
\begin{tikzcd}[column sep=1cm, row sep=1.48cm]
\cC[2]
\arrow[r]
\arrow[d]
\arrow[dr, phantom, "\pushout", very near end,
yshift=0.2cm, xshift=0.1cm]
&
\Cone\cC[1]\arrow[d]
&
\\
\cC[1]
\arrow[r]
&
\cA'[1]\arrow[d]
&
\cC[1]
\arrow[l]
\arrow[d]
\arrow[dl, phantom, "\text{\reflectbox{$\pushout$}}",
very near end, yshift=0.2cm, xshift=-0.1cm]
\\
&
\Cone\cC[0]
&
\cC[0]\arrow[l]
\\
\end{tikzcd}
\end{flushleft}
\end{minipage}
 \begin{minipage}[t]{0.5\textwidth}
 \begin{flushleft}
\begin{align*}
\Cone\cC[1] = \begin{boxedminipage}[t][2.0cm][t]{0.5\textwidth}
			\begin{tikzcd}[ampersand replacement=\&]
            \& \color{yellow!50!black}{\top} \&\\
				\color{magenta!90!red}{\bullet}\arrow[ru, green, very thick, ""{name=G, below}] 
                \arrow[rr, blue, very thick, ""{name=D}]
				\&\& \color{red!50!white}{\bullet} 
                \arrow[lu, orange,  very thick, ""{name=O}]
				\ar[Rightarrow, from=G, to=O, 
                shorten >= 5pt, shorten <= 5pt,  cyan,
                ""]
			\end{tikzcd}
            \end{boxedminipage}
            \\
	\cA'[1] = 
           \begin{boxedminipage}[t][2.0cm][t]{0.5\textwidth}
            \begin{tikzcd}[ampersand replacement=\&]
            \& \color{yellow!50!black}{\top} \&\\
				\color{magenta!90!red}{\bullet}
                \arrow[rr, blue, very thick, ""{name=D}]
				\&\& \color{red!50!white}{\bullet} 
                \arrow[lu, orange,  very thick]
			\end{tikzcd}
            \end{boxedminipage}
            \\
      \Cone\cC[0] = \begin{boxedminipage}[t][2.0cm][t]{0.5\textwidth}
			\begin{tikzcd}[ampersand replacement=\&]
            \& \color{yellow!50!black}{\top} \&\\
				\phantom{\color{magenta!90!red}{\bullet}}
				\&\& \color{red!50!white}{\bullet}
                \arrow[lu, orange,  very thick]
			\end{tikzcd}
            \end{boxedminipage}
\end{align*}
\end{flushleft}
\end{minipage}
\end{rmk}

\begin{lem}
\label{lem:retract-pushout}
Consider a pushout square
  \[
    \begin{tikzcd}
      X
      \ar[r]
      \ar[d,"s"swap]
      \arrow[dr, phantom, "\pushout", very near end, yshift=0.2cm, xshift=0.1cm]
      &
      X'
      \ar[d,"s'"]
      \\
      Y
      \ar[r]
      &Y'
    \end{tikzcd}
  \] 
  and let $r\colon Y\to X$ be a retraction of $s\colon X\to Y$.
  Then $s'$ has a unique retraction $r':Y'\to X$ that makes the square
  \[
    \begin{tikzcd}
      Y
      \ar[r]
      &Y'
      \\
      X
      \ar[r]
      \ar[from=u,"r"swap]
      &
      X'
      \ar[from=u,"r'",dashed]
    \end{tikzcd}
  \] 
commute and moreover this square is also a pushout square.
\end{lem}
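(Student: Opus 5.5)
The plan is to construct $r'$ directly from the universal property of the given pushout, and then to check the pushout property of the new square by hand. I read the statement's $r'$ as a map $Y'\to X'$, since that is what makes the displayed square typecheck. Write $f\colon X\to X'$ and $g\colon Y\to Y'$ for the horizontal maps, so that $s'f=gs$ and $Y'\cong Y\amalg_X X'$.

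\emph{Existence and uniqueness of $r'$.} A map $r'\colon Y'\to X'$ amounts to a compatible pair of maps out of $Y$ and out of $X'$. The two conditions we want, namely $r's'=\id_{X'}$ (retraction of $s'$) and $r'g=fr$ (commutativity of the new square), prescribe these two components as $\id_{X'}$ and $fr$.

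\begin{itemize}
\item Compatibility on $X$ is the identity $fr s=f=\id_{X'}f$, which holds because $rs=\id_X$.
\item The universal property of the pushout therefore gives a unique $r'$ with both properties.
\item Any other retraction making the square commute must agree with $r'$ on both legs of the pushout, hence equals it.
\end{itemize}

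\emph{The new square is a pushout.} The square has span $Y'\xleftarrow{g}Y\xrightarrow{r}X$ and cocone $r'\colon Y'\to X'$, $f\colon X\to X'$. Let $u\colon Y'\to Z$ and $v\colon X\to Z$ satisfy $ug=vr$. I will take $w\coloneq us'\colon X'\to Z$ as the candidate map and check three things.

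\begin{itemize}
\item $wf=v$: we have $wf=us'f=ugs=vrs=v$.
\item $wr'=u$: I test this on the two legs of the original pushout $Y'$.
\begin{itemize}
\item On $s'$: $wr's'=w=us'$.
\item On $g$: $wr'g=wfr=vr=ug$.
\end{itemize}
So $wr'$ and $u$ agree on both legs, hence are equal.
\item Uniqueness: any $w'$ with $w'r'=u$ satisfies $w'=w'r's'=us'=w$.
\end{itemize}

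This establishes the pushout property.

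\emph{Expected obstacle.} The only real obstacle is bookkeeping. The step $wr'=u$ needs the universal property of the \emph{original} pushout a second time, as a uniqueness statement for maps out of $Y'$. I expect that to be the one point requiring care. Everything else is diagram chasing using $rs=\id_X$ and $s'f=gs$.
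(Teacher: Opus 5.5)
Your argument is correct. Reading $r'$ as a map $Y'\to X'$ is right (the codomain $X$ in the statement is a typo), and building $r'$ from the compatible pair $(fr,\mathrm{id}_{X'})$ is exactly what the paper does. For the pushout claim, the paper stacks the two squares so that the vertical composites are $rs=\mathrm{id}_X$ and $r's'=\mathrm{id}_{X'}$, notes that the outer rectangle is then trivially a pushout, and concludes by pushout cancellation; your direct check of the universal property with $w=us'$ is that cancellation argument unwound in this special case, so it is self-contained at the cost of a few extra lines.
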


\begin{proof}
  By the universal property of pushout there is a unique dashed map
  in the following commutative diagram
  \[
    \begin{tikzcd}
      X
      \ar[r]
      \ar[d,"s"]
      \ar[dd,"="', bend right]
      \arrow[dr, phantom, "\pushout", very near end, yshift=0.2cm, xshift=0.1cm]
      &
      X'
      \ar[d,"s'"]
      \ar[dd,"=", bend left=45]
      \\
      Y
      \ar[r]
      &Y'
      \\
      X
      \ar[r]
      \ar[from=u,"r"]
      &
      X'
      \ar[from=u,"r'",dashed]
    \end{tikzcd}
  \] 
  Moreover, the lower square is pullback by cancellation.
\end{proof}

We can now prove the main formula:

\begin{proof}[Proof of \cref{ConeCell3}]
By \cref{ConeCell3} we have pushout diagrams
\[
\begin{tikzcd}
\cC[k]\arrow[dr, phantom, "\pushout", very near end, yshift=0.2cm, xshift=0.1cm]\arrow[r]\arrow[d,hook]\arrow[d,hook]&\Cone\cC[k]\arrow[d,hook]\\
\cC[k+1]\arrow[r]&\cA'[k+1]
\end{tikzcd}
\quad\quad
\begin{tikzcd}
\cA'[k+1]\arrow[d,hook]&\cC[k+1]\arrow[d,hook]\arrow[l]\ar[ld,phantom,"\text{\reflectbox{$\pushout$}}",very near end,yshift=0.2cm, xshift=0.1cm]\\
\Cone\cC[k+1]&\cC[k+2]\arrow[l]
\end{tikzcd}
\]
Recall that the cotarget $\cC[k]\hookrightarrow\cC[k+1]$ and $\cC[k+1]\hookrightarrow\cC[k+2]$
admit retractions $r_k\colon\cC[k+1]\to\cC[k]$ and $r_{k+1}\colon\cC[k+2]\to\cC[k+1]$.
By \cref{lem:retract-pushout}, they induce retractions
\[
[r_k,\id]\colon\cA'[k+1]\to\Cone\cC[k]\quad\text{ and }\quad [\id,r_{k+1}]\colon\Cone\cC[k+1]\to\cA'[k+1]
\]
for the inclusions
\[
\Cone\cC[k]\hookrightarrow\cA'[k+1]\quad\text{ and }\quad \cA'[k+1]\hookrightarrow\Cone\cC[k+1],
\]
and there are pushout diagrams
\[
\begin{tikzcd}
\cC[k+1]
\arrow[dr, phantom, "\pushout", very near end, yshift=0.2cm, xshift=0.1cm]
\arrow[r]\arrow[d,"r_k"swap]\arrow[d,two heads]&
\cA'[k+1]
\arrow[d,two heads,"{[r_k,\id]}"]\\
\cC[k]\arrow[r]&\Cone\cC[k]
\end{tikzcd}
\quad\quad
\begin{tikzcd}
\Cone\cC[k+1]
\arrow[d,two heads,"{[\id,r_{k+1}]}"swap]&\cC[k+2]\arrow[d,two heads,"r_{k+1}"]\arrow[l]
\ar[ld,phantom,"\text{\reflectbox{$\pushout$}}",very near end,yshift=0.2cm, xshift=0.1cm]\arrow[l]
\\
\cA'[k+1]&\cC[k+1]\arrow[l]
\end{tikzcd}
\]
The first claim then follows, and it is then straightforward to check that the total composite is the desired induced map.
\end{proof}

\section{About $\omega$-categories - homotopy theory}

In this section we will assume the reader to be familiar with the basics of model category theory; see e.g.~\cite{hovey}.

\subsection{Background on the homotopy theory of $\omega$-categories}

\begin{rmk}
\label{ModelStructuresCat}
As proven in \cite[\textsection4,~\textsection6]{LMW},
\begin{itemize}[leftmargin=*]
\item there is a model structure $\omega\cat_{\mathrm{can}}$ on the category $\omega\cat$ of $\omega$-categories in which
\begin{itemize}
    \item every object is fibrant;
    \item the cofibrant objects are the polygraphs; these are $\omega$-categories obtained from the initial $\omega$-category by successively adjoining cells along the boundary inclusions $\partial\cC[n]\hookrightarrow\cC[n]$, in non-decreasing order of dimension.
    \item the set of boundary inclusions $\partial\cC[k]\hookrightarrow\cC[k]$ for $k\geq0$ generates the class of cofibrations
    \item given, for $k\geq0$, a factorization     $\partial\cC[k+1]\hookrightarrow\cP[k]\xrightarrow{\simeq}\cC[k]$ of the canonical map $\partial\cC[k+1]\to\cC[k]$ as a cofibration followed by a trivial fibration,
    the class of inclusions $\cC[k]\hookrightarrow\cP[k]$ induced by source inclusion $\cC[k]\hookrightarrow\partial\cC[k+1]$ for $k\geq0$ generates the class of acyclic cofibrations.
    \end{itemize}
    \item for $n\geq0$, there is a model structure $n\cat_{\mathrm{can}}$
    on the category $n\cat$ of $n$-categories in which every object is fibrant and the class of boundary inclusions $\partial\cC[k]\hookrightarrow\cC[k]$ for $0\leq k\leq n$ together with the folding map $\partial\cC[n+1]\to\cC[n]$ generates the class of cofibrations. 
\end{itemize}
\end{rmk}

\begin{prop}
\label{SigmaPreservesCofibrations}
For all $n\geq0$, the functor
$\Sigma^n\colon\omega\cat_{\mathrm{can}}\to\omega\cat_{\mathrm{can}}$ preserves cofibrations.
\end{prop}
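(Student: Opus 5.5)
It suffices to treat $n=1$ and iterate, since $\Sigma^n=\Sigma\circ\cdots\circ\Sigma$ as endofunctors of $\omega\cat$ (forgetting the basepoints) and a composite of cofibration-preserving functors preserves cofibrations. So the task is to show that $\Sigma\colon\omega\cat_{\mathrm{can}}\to\omega\cat_{\mathrm{can}}$ preserves cofibrations. By \cref{ModelStructuresCat}, the cofibrations are the retracts of transfinite composites of pushouts of the boundary inclusions $\partial\cC[k]\hookrightarrow\cC[k]$, $k\geq0$. I therefore want to check that $\Sigma$ preserves the relevant colimits and sends each generator to a cofibration.

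First, \cref{SuspensionAdjunctionCat} makes $\Sigma\colon\omega\cat\to{}^{\partial\cC[1]/}\omega\cat$ a left adjoint, so it preserves all colimits computed in the slice. The forgetful functor ${}^{\partial\cC[1]/}\omega\cat\to\omega\cat$ preserves pushouts and transfinite (connected, nonempty) colimits, since these are computed underlying. Hence the composite $\Sigma\colon\omega\cat\to\omega\cat$ preserves pushouts and transfinite composites. Retracts are preserved by any functor. The class of cofibrations is closed under pushouts, transfinite composites and retracts, so it is enough to show that each $\Sigma(\partial\cC[k]\hookrightarrow\cC[k])$ is a cofibration.

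For the generators I use the identifications $\Sigma\cC[k]\cong\cC[k+1]$ and $\Sigma\partial\cC[k]\cong\partial\cC[k+1]$; the latter matches the identification $\partial\cC[\ell]\cong\Sigma^\ell\varnothing$ used in \cref{IteratedSuspensionAdjunctionCat}. Both can be verified from the explicit description of homs in $\Sigma\cD$ in \cref{SuspensionCat}. Alternatively one can argue by adjunction: maps $\Sigma\cC[k]\to\cD$ under $\partial\cC[1]$ correspond to $k$-cells of $\cD(d,d')$, i.e. $(k+1)$-cells of $\cD$ with the given $0$-dimensional endpoints, and this is exactly what maps $\cC[k+1]\to\cD$ under $\partial\cC[1]$ classify. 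For the boundary, write $\partial\cC[k]$ as the pushout of two copies of $\cC[k-1]$ along $\partial\cC[k-1]$ and apply cocontinuity inductively, starting from $\Sigma\varnothing=\partial\cC[1]$. Under these identifications $\Sigma(\partial\cC[k]\hookrightarrow\cC[k])$ is the boundary inclusion $\partial\cC[k+1]\hookrightarrow\cC[k+1]$, which is a generating cofibration.

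The main point requiring care is the colimit bookkeeping. Colimits in the slice ${}^{\partial\cC[1]/}\omega\cat$ do not agree with those in $\omega\cat$ for non-connected diagrams: for example, the empty colimit is $\partial\cC[1]$ rather than $\varnothing$, and coproducts become pushouts over $\partial\cC[1]$. I avoid this by never using coproducts of generators, only single-cell pushouts and transfinite composites, whose indexing categories are connected. A pushout of a coproduct of generators can always be rewritten as a transfinite composite of pushouts of single generators, so nothing is lost. Finally, $\Sigma\varnothing=\partial\cC[1]$ is itself a polygraph (two objects), so $\Sigma$ also sends the initial object to a cofibrant one, although this fact is not needed for preservation of cofibrations as maps.
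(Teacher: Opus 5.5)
Your proof is correct and uses the same idea as the paper: $\Sigma$ is a left adjoint into ${}^{\partial\cC[1]/}\omega\cat$ that sends each boundary inclusion $\partial\cC[k]\hookrightarrow\cC[k]$ to $\partial\cC[k+1]\hookrightarrow\cC[k+1]$, and one then iterates. The only difference is where the saturation argument is run. The paper runs it inside the slice model structure, where $\Sigma$ preserves all colimits and the forgetful functor creates cofibrations, so your restriction to connected colimits (single-cell pushouts and transfinite composites) is unnecessary there. You instead stay in $\omega\cat$ and handle that bookkeeping explicitly, which is equally valid.
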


\begin{proof}
First, we observe that the functor $\Sigma\colon\omega\cat_{\mathrm{can}}\to{}^{\partial\cC[1]/}\omega\cat_{\mathrm{can}}$ is a left adjoint functor which preserves generating cofibrations; hence, it preserves all cofibrations. Moreover, the forgetful functor $U\colon {}^{\partial\cC[1]/}\omega\cat_{\mathrm{can}}\to\omega\cat_{\mathrm{can}}$ preserves (in fact, creates) cofibrations by definition. It follows that the functor $\Sigma\colon\omega\cat_{\mathrm{can}}\to\omega\cat_{\mathrm{can}}$ preserves cofibrations, and that the iterated composite $\Sigma^n\colon\omega\cat_{\mathrm{can}}\to\omega\cat_{\mathrm{can}}$ preserves cofibrations, concluding the proof.
\end{proof}

We record a more explicit description for a set of generating acyclic cofibrations in $\omega\cat_{\mathrm{can}}$.

\begin{lem}
\label{AcyclicCofOmegaCat0}
Let $\cP$ be any $\omega$-category that is contractible and cofibrant in $\omega\cat_{\mathrm{can}}$
and has at least two objects. Given any injective-on-objects
map $\partial\cC[1]\to\cP$, the following hold.
\begin{enumerate}[leftmargin=*, ref=(\arabic*)]
\item\label{SuspPoly1} For all $n\geq0$ the induced map $\Sigma^n\partial\cC[1]\to\Sigma^n\cP$ is a cofibration in $\omega\cat_{\mathrm{can}}$.
\item\label{SuspPoly2} For all $n\geq0$ the induced map $\Sigma^n\cP\to\Sigma^n\cC[0]$ is an acyclic fibration in $\omega\cat_{\mathrm{can}}$.
\end{enumerate}
\end{lem}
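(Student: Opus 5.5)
For \ref{SuspPoly1}, I first reduce to the case $n=0$ and then invoke \cref{SigmaPreservesCofibrations}. The claim for $n=0$ is that $\partial\cC[1]\to\cP$ is a cofibration in $\omega\cat_{\mathrm{can}}$. Since $\cP$ is a polygraph, its $0$-cells are generators. An injective-on-objects map $\partial\cC[1]\to\cP$ picks two distinct generating $0$-cells, so it is the inclusion of a subpolygraph. Concretely, $\cP$ is obtained from $\partial\cC[1]$ by attaching the remaining generating $0$-cells along $\partial\cC[0]=\varnothing\hookrightarrow\cC[0]$, followed by all the higher generators in the order in which they appear in the polygraph structure of $\cP$. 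Hence $\partial\cC[1]\to\cP$ is a relative cell complex built from the generating cofibrations of \cref{ModelStructuresCat}, and therefore a cofibration. Applying $\Sigma^n$ and using \cref{SigmaPreservesCofibrations}, the map $\Sigma^n\partial\cC[1]\to\Sigma^n\cP$ is again a cofibration.

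For \ref{SuspPoly2}, the case $n=0$ is immediate. Every object of $\omega\cat_{\mathrm{can}}$ is fibrant, so $\cP\to\cC[0]$ is a fibration. It is also a weak equivalence because $\cP$ is contractible. Hence it is an acyclic fibration, which means it has the right lifting property against every $\partial\cC[k]\hookrightarrow\cC[k]$.

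For general $n$ I will induct using a single suspension. Since $\Sigma^n\cC[0]\cong\cC[n]$, it suffices to show that $\Sigma$ sends an acyclic fibration $f\colon\cD\to\cD'$ to an acyclic fibration $\Sigma f$. I will check this by lifting against $\partial\cC[k]\hookrightarrow\cC[k]$. Suppose we are given a square with $\cC[k]\to\Sigma\cD'$ and $\partial\cC[k]\to\Sigma\cD$.
\begin{itemize}
\item If $k=0$, lift the object directly; $\Sigma f$ is a bijection on objects.
\item If $k\geq1$, the source and target of the $k$-cell in $\Sigma\cD'$ determine objects of $\Sigma\cD$.
\begin{itemize}
\item If both objects are $\bot$, or both are $\top$, the only cells in that hom are identities, and these lift uniquely.
\item The pair $(\top,\bot)$ cannot occur, since that hom is empty.
\item In the remaining case $(\bot,\top)$, the square corresponds under the adjunction of \cref{SuspensionAdjunctionCat} (using $\cC[k]\cong\Sigma\cC[k-1]$ and $\partial\cC[k]\cong\Sigma\partial\cC[k-1]$, both under $\partial\cC[1]$) to a lifting problem for $\partial\cC[k-1]\hookrightarrow\cC[k-1]$ against $f\colon\cD\to\cD'$ on homs. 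That problem is solvable because $f$ is an acyclic fibration.
\end{itemize}
\end{itemize}
The adjunction is formulated for maps under $\partial\cC[1]$. The square, however, is not a priori under $\partial\cC[1]$ as a map from $\Sigma\cC[k-1]$ unless the endpoints are $\bot$ and $\top$ in that order. That is exactly why the case split on endpoints is needed, and it is where I expect care is required.

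The main obstacle is therefore the degenerate endpoint cases. In particular, I must make sure that a $k$-cell of $\Sigma\cD'$ with both endpoints equal to $\bot$ is necessarily an identity, so that its lift, together with compatibility with the given boundary, is automatic. This holds because the endo-homs of $\Sigma\cD'$ are $\cC[0]$.
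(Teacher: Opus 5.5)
Your proposal is correct and follows the paper's proof essentially step by step. For (1) the paper likewise factors $\partial\cC[1]\hookrightarrow\mathrm{ob}\,\cP\hookrightarrow\cP$ and then uses that $\Sigma^n$ preserves cofibrations; for (2) it runs the same lifting argument against $\partial\cC[k]\hookrightarrow\cC[k]$, transposing along the suspension--hom adjunction at arbitrary endpoints $(a,b)$ and observing that the hom map is an isomorphism unless $(a,b)=(\bot,\top)$, the only difference being that you package the inductive step as the general fact that $\Sigma$ preserves acyclic fibrations while the paper applies the induction hypothesis directly to $\Sigma^{n-1}\cP\to\Sigma^{n-1}\cC[0]$.
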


\begin{proof}
We first show \ref{SuspPoly1} by induction on $n\geq0$; that is, the map $\Sigma^n\partial\cC[1]\to\Sigma^n\cP$ is a cofibration in $\omega\cat_{\mathrm{can}}$.
For $n=0$, we observe that the map $\partial\cC[1]\to\cP$ factors as
\[
\partial\cC[1]\hookrightarrow \mathrm ob\cP\hookrightarrow \cP.
\]
The first map $\partial\cC[1]\hookrightarrow \mathrm ob\cP$ is a cofibration, since it is obtained by adjoining the remaining objects of $\cP$, while the second map $\mathrm ob\cP\hookrightarrow \cP$ is a cofibration, since $\cP$ is a polygraph and is obtained from its set of objects by successively adjoining its positive-dimensional generators. Hence, the map $\partial\cC[1]\hookrightarrow\cP$ is a cofibration.
For $n>0$, the fact that $\Sigma^n\partial\cC[1]\to\Sigma^n\cP$ is a cofibration in $\omega\cat_{\mathrm{can}}$ follows from \cref{SigmaPreservesCofibrations}, proving \ref{SuspPoly1}.

We now show \ref{SuspPoly2} by induction on $n\geq0$; that is, the map $\Sigma^n\cP\to\Sigma^n\cC[0]$ is an acyclic fibration in $\omega\cat_{\mathrm{can}}$.
For $n=0$, we know that $\cP\to\cC[0]$ is a weak equivalence because $\cP$ is contractible and that it is a fibration because $\cP$ (like any other object) is fibrant on $\omega\cat_{\mathrm{can}}$. For $n>0$, assume we are given the lifting problem in $\omega\cat$:
\[
\begin{tikzcd}
\partial \cC[k] \arrow[r]\arrow[d]& \Sigma^n \cP\arrow[d]\\
\cC[k] \arrow[r]\arrow[ru,dashed,"?"]& \Sigma^n \cC[0].
\end{tikzcd}
\]
We note that the right vertical map is a bijection on objects. Then, if $k=0$, a lift exists. If $k>0$, thanks to the identifications
\[
\partial\cC[k]\cong \Sigma\partial\cC[k-1]
\qquad\text{and}\qquad
\cC[k]\cong \Sigma\cC[k-1],
\]
the left vertical map is in the image of $\Sigma$,
we can upgrade it to a lifting problem in ${}^{\partial\cC[1]/}\omega\cat$:
\[
\begin{tikzcd}
(\Sigma\partial \cC[k-1],\bot,\top) \arrow[r]\arrow[d]& (\Sigma^n \cP,a,b)\arrow[d]\\
(\Sigma\cC[k-1],\bot,\top) \arrow[r]\arrow[ru,dashed,"?"]& (\Sigma^n \cC[0],a,b).
\end{tikzcd}
\]
Transposing along the adjunction from \cref{SuspensionAdjunctionCat} we can rewrite it as a lifting problem in $\omega\cat$ for some $a$ and $b$ objects in $\Sigma^n\cP$:
\[
\begin{tikzcd}
    \partial \cC[k-1] \arrow[r]\arrow[d]& (\Sigma^n \cP)(a,b)\arrow[d]\\
    \cC[k-1] \arrow[r]\arrow[ru,dashed,"?"]& (\Sigma^n \cC[0])(a,b).
\end{tikzcd}
\]
If $a\neq\bot$ or $b\neq\top$, the right vertical map is an isomorphism, so a lift exists. Otherwise, the lifting problem becomes
\[
\begin{tikzcd}
    \partial \cC[k-1] \arrow[r]\arrow[d]& \Sigma^{n-1} \cP\arrow[d]\\
    \cC[k-1] \arrow[r]\arrow[ru,dashed,"?"]& \Sigma^{n-1}\cC[0].
\end{tikzcd}
\]
and a lift exists by induction hypothesis, proving the claim.
\end{proof}

\begin{prop}
\label{AcyclicCofOmegaCat}
Given a factorization
$\partial \cC[1] \hookrightarrow \cP \xrightarrow{\simeq} \cC[0]$ 
of the canonical map $\partial\cC[1]\to\cC[0]$ as a cofibration followed by a weak equivalence, 
the set of inclusions $\Sigma^n\cC[0]\hookrightarrow\Sigma^n\cP$
induced by the source inclusion $\cC[0]\hookrightarrow\partial\cC[1]$
for $n\geq0$ generates the class of acyclic cofibrations of the model category $\omega\cat_{\mathrm{can}}$.
\end{prop}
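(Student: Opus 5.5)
The plan is to compare with the generating set of acyclic cofibrations recorded in \cref{ModelStructuresCat}, namely the inclusions $\cC[k]\hookrightarrow\cP[k]$ for a chosen factorization $\partial\cC[k+1]\hookrightarrow\cP[k]\xrightarrow{\simeq}\cC[k]$. We show that the maps $\Sigma^n\cC[0]\hookrightarrow\Sigma^n\cP$ can serve as such a choice, with $k=n$. Since the generating set in \cref{ModelStructuresCat} is stated for an arbitrary such factorization, this identification suffices.

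First, we identify $\Sigma^n\partial\cC[1]\cong\partial\cC[n+1]$ and $\Sigma^n\cC[0]\cong\cC[n]$. The first holds because $\partial\cC[\ell]\cong\Sigma^\ell\varnothing$ (as used in \cref{IteratedSuspensionAdjunctionCat}). For the second, $\cC[k]\cong\Sigma\cC[k-1]$ and $\cC[0]$ is terminal, so $\Sigma^n\cC[0]\cong\cC[n]$. Under these identifications, $\Sigma^n$ of the fold map $\partial\cC[1]\to\cC[0]$ is the canonical map $\partial\cC[n+1]\to\cC[n]$. Likewise, $\Sigma^n$ of the source inclusion $\cC[0]\hookrightarrow\partial\cC[1]$ is the source inclusion $\cC[n]\hookrightarrow\partial\cC[n+1]$. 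We check this by induction on $n$, using that $\Sigma$ sends $(\partial\cC[k]\hookrightarrow\cC[k])$ to $(\partial\cC[k+1]\hookrightarrow\cC[k+1])$ and sends source inclusions to source inclusions, since it merely shifts dimensions by one.

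Next, we verify that applying $\Sigma^n$ yields a factorization of the required kind. The hypotheses of \cref{AcyclicCofOmegaCat0} hold: $\cP$ is cofibrant, because $\varnothing\to\partial\cC[1]\to\cP$ is a composite of cofibrations, and it is contractible, since $\cP\to\cC[0]$ is a weak equivalence. It has at least two objects and $\partial\cC[1]\to\cP$ is injective on objects, because the composite with $\cP\to\cC[0]$ forces nothing, but any cofibration is a monomorphism on underlying objects. More directly, the two objects of $\partial\cC[1]$ remain distinct in the polygraph $\cP$, since the map is a relative cell complex and objects are only added, never identified. \cref{AcyclicCofOmegaCat0} then gives that $\Sigma^n\partial\cC[1]\to\Sigma^n\cP$ is a cofibration and that $\Sigma^n\cP\to\Sigma^n\cC[0]$ is an acyclic fibration. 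We therefore obtain a factorization
\[\partial\cC[n+1]\hookrightarrow\Sigma^n\cP\xrightarrow{\simeq}\cC[n]\]
of the canonical map as a cofibration followed by a trivial fibration. With $\cP[n]\coloneq\Sigma^n\cP$, the map $\Sigma^n\cC[0]\hookrightarrow\Sigma^n\cP$ is exactly the inclusion $\cC[n]\hookrightarrow\cP[n]$ induced by the source inclusion. By \cref{ModelStructuresCat}, these maps for $n\geq0$ generate the acyclic cofibrations.

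The main obstacle is the injectivity-on-objects point, together with the compatibility of the identifications with the source and target maps. I would handle injectivity by noting that a cofibration in $\omega\cat_{\mathrm{can}}$ is a retract of a relative cell complex built from the inclusions $\partial\cC[k]\hookrightarrow\cC[k]$. Each such cell attachment is injective on objects, and injectivity is stable under transfinite composition and retracts. A secondary concern is whether \cref{ModelStructuresCat} genuinely allows an arbitrary factorization. If it does not, one would instead argue directly: each map $\cC[n]\hookrightarrow\Sigma^n\cP$ is an acyclic cofibration, being a cofibration between objects weakly equivalent to $\cC[n]$ by two-out-of-three. Conversely, a map with the right lifting property against all of these maps is a fibration, via the standard argument that the generating acyclic cofibrations for different factorizations are retracts of one another up to lifting.
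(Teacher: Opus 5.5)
Your proposal is correct and follows essentially the same route as the paper: apply $\Sigma^n$ to the factorization, use \cref{AcyclicCofOmegaCat0} to see that $\partial\cC[n+1]\cong\Sigma^n\partial\cC[1]\hookrightarrow\Sigma^n\cP\to\Sigma^n\cC[0]\cong\cC[n]$ is a cofibration followed by an acyclic fibration, and conclude from \cref{ModelStructuresCat}, which does allow an arbitrary such factorization, so the fallback in your last paragraph is not needed. The paper does not spell out the hypotheses of \cref{AcyclicCofOmegaCat0}, so your check of them is a useful addition. Injectivity on objects is cleanest via the fact that $\mathrm{core}_0$ preserves colimits (\cref{ChaosAdjunctionCat}), which makes every cofibration injective on objects; your phrase about the composite with $\cP\to\cC[0]$ ``forcing nothing'' should simply be deleted.
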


\begin{proof}
Applying $\Sigma^n$ for $n\geq0$ to the given factorization we obtain
\[
\partial \cC[n+1]\cong\Sigma^n\partial \cC[1] \hookrightarrow \Sigma^n\cP \xrightarrow{\simeq} \Sigma^n\cC[0]\cong\cC[n].
\]
By \cref{AcyclicCofOmegaCat0}, this is a factorization
of the map $\partial\cC[n+1]\to\cC[n]$ as a cofibration followed by a weak equivalence.
Hence, the set of maps $\Sigma^n\cC[0]\hookrightarrow\Sigma^n\cP$ is by \cref{ModelStructuresCat} a set of generating acyclic cofibrations for $\omega\cat_{\mathrm{can}}$, as desired.
\end{proof}

\begin{prop}
\label{SuspensionQuillenCat}
The suspension-hom pair from \cref{SuspensionAdjunctionCat} forms a Quillen adjunction
\[\Sigma\colon\omega\cat_{\mathrm{can}}\rightleftarrows{}^{\partial\cC[1]/}\omega\cat_{\mathrm{can}}\noloc(-)(-,-),\]
where the undercategory ${}^{\partial\cC[1]/}\omega\cat_{\mathrm{can}}$ is equipped with the induced model structure of \cite{HirschhornOvercategories}.
\end{prop}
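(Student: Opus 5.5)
To prove that $\Sigma\dashv(-)(-,-)$ is a Quillen adjunction, I will check that the left adjoint $\Sigma\colon\omega\cat_{\mathrm{can}}\to{}^{\partial\cC[1]/}\omega\cat_{\mathrm{can}}$ preserves cofibrations and acyclic cofibrations. Since $\Sigma$ is a left adjoint by \cref{SuspensionAdjunctionCat}, it preserves colimits and hence all cell complexes and retracts. It therefore suffices to check that $\Sigma$ sends a set of generating cofibrations to cofibrations and a set of generating acyclic cofibrations to acyclic cofibrations.

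Recall that in the model structure of \cite{HirschhornOvercategories} on the undercategory, a map is a cofibration (resp.\ weak equivalence, fibration) exactly when its underlying map in $\omega\cat_{\mathrm{can}}$ is one. So it is enough to work with the underlying maps after forgetting the basepoint $\partial\cC[1]$.

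\textbf{Cofibrations.} The generating cofibrations are the boundary inclusions $\partial\cC[k]\hookrightarrow\cC[k]$ for $k\geq0$. Their images under $\Sigma$ are $\Sigma\partial\cC[k]\cong\partial\cC[k+1]\hookrightarrow\cC[k+1]\cong\Sigma\cC[k]$, which are again boundary inclusions and hence cofibrations. This is just the case $n=1$ of \cref{SigmaPreservesCofibrations}.

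\textbf{Acyclic cofibrations.} Fix a factorization $\partial\cC[1]\hookrightarrow\cP\xrightarrow{\simeq}\cC[0]$ as in \cref{AcyclicCofOmegaCat}. By that proposition, the maps $\Sigma^n\cC[0]\hookrightarrow\Sigma^n\cP$ for $n\geq0$ generate the acyclic cofibrations. Applying $\Sigma$ to such a map gives $\Sigma^{n+1}\cC[0]\hookrightarrow\Sigma^{n+1}\cP$, which is again a member of the same generating family, now with index $n+1$. It is therefore an acyclic cofibration in $\omega\cat_{\mathrm{can}}$, and hence in the undercategory.

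The main obstacle is making sure the generating set of \cref{AcyclicCofOmegaCat} applies as stated. Two points need care:
\begin{enumerate}
\item The map $\partial\cC[1]\to\cP$ must be injective on objects, so that \cref{AcyclicCofOmegaCat0} applies. This holds because a cofibration with source $\partial\cC[1]$ is a relative polygraph, which cannot identify the two objects.
\item The identification $\Sigma\circ\Sigma^n=\Sigma^{n+1}$ must be compatible with the source inclusions. This follows from the construction of the iterated adjunction in \cref{IteratedSuspensionAdjunctionCat}.
\end{enumerate}

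As an alternative route that avoids generators, one can show that the right adjoint preserves fibrations and acyclic fibrations. For acyclic fibrations this is exactly the transposition argument in the proof of \cref{AcyclicCofOmegaCat0}: a lifting problem of $\partial\cC[k-1]\hookrightarrow\cC[k-1]$ against $\cD(a,b)\to\cD'(a',b')$ corresponds under the adjunction to a lifting problem of $\partial\cC[k]\hookrightarrow\cC[k]$ against $\cD\to\cD'$. The first argument above is cleaner, so I would present that one.
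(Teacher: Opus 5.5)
Your proposal is correct and follows essentially the same argument as the paper. $\Sigma$ is a left adjoint that sends the generating cofibrations $\partial\cC[k]\hookrightarrow\cC[k]$ to boundary inclusions, and, using \cref{AcyclicCofOmegaCat}, sends the generating acyclic cofibrations $\Sigma^n\cC[0]\hookrightarrow\Sigma^n\cP$ to members $\Sigma^{n+1}\cC[0]\hookrightarrow\Sigma^{n+1}\cP$ of the same family; your extra checks (cofibrations in the undercategory are detected on underlying maps, $\partial\cC[1]\to\cP$ is injective on objects, and $\Sigma\circ\Sigma^n=\Sigma^{n+1}$ is compatible with the source inclusions) simply make explicit what the paper uses implicitly.
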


\begin{proof}
By definition, the suspension $\Sigma$ preserves generating cofibrations and, since it is a left adjoint by \cref{SuspensionAdjunctionCat}, all cofibrations. By \cref{AcyclicCofOmegaCat}, the suspension $\Sigma$ preserves a set of generating acyclic cofibrations and since it is a left adjoint, all acyclic cofibrations.
\end{proof}

\begin{prop}
\label{IteratedSuspensionQuillenCat}
The Quillen adjunction from \cref{SuspensionQuillenCat} induces a Quillen adjunction
\[\Sigma^{\ell}\colon\omega\cat_{\mathrm{can}}\rightleftarrows{}^{\partial\cC[\ell]/}\omega\cat_{\mathrm{can}}\noloc(-)^{(\ell)}(-),\]
where the undercategory ${}^{\partial\cC[\ell]/}\omega\cat_{\mathrm{can}}$ is equipped with the induced model structure of \cite{HirschhornOvercategories}.
\end{prop}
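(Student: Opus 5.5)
The plan is to mimic the proof of \cref{IteratedSuspensionAdjunctionCat}: build the adjunction as the composite of $\ell$ sliced copies of the Quillen adjunction from \cref{SuspensionQuillenCat}, and check that each factor is Quillen for the induced model structures. By \cref{IteratedAdjunction}, with $\cC=\omega\cat$, $c_0=\varnothing$, $F=\Sigma$ and $G=(-)(-,-)$, the adjunction $\Sigma^\ell\dashv(-)^{(\ell)}(-)$ is a composite of adjunctions
\[
{}^{\partial\cC[j]/}\omega\cat\rightleftarrows{}^{\partial\cC[j+1]/}\omega\cat,\qquad 0\leq j<\ell.
\]
Here the $j$-th factor is obtained by slicing the suspension-hom adjunction under $\partial\cC[j]$ and then using $\partial\cC[j+1]\cong\Sigma\partial\cC[j]$. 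Since a composite of Quillen adjunctions is Quillen, it suffices to treat each factor.

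First I would check that slicing preserves the Quillen property. Let $L\colon\cM\rightleftarrows\cN\noloc R$ be a Quillen adjunction and $m\in\cM$. The induced adjunction ${}^{m/}\cM\rightleftarrows{}^{Lm/}\cN$ sends $(m\to x)$ to $(Lm\to Lx)$. In the model structures of \cite{HirschhornOvercategories}, cofibrations and acyclic cofibrations in an undercategory are exactly those maps whose underlying maps are such. The sliced left adjoint acts as $L$ on underlying maps, so it preserves cofibrations and acyclic cofibrations and is therefore left Quillen. Applying this with $L=\Sigma\colon\omega\cat_{\mathrm{can}}\to{}^{\partial\cC[1]/}\omega\cat_{\mathrm{can}}$ and $m=\partial\cC[j]$ shows that each first stage is Quillen.

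Next I would show that the identification
\[
{}^{\Sigma\partial\cC[j]/}\left({}^{\partial\cC[1]/}\omega\cat\right)\simeq{}^{\partial\cC[j+1]/}\omega\cat
\]
is an equivalence of model categories, i.e.\ that it preserves and reflects the three classes. This holds because in both categories the cofibrations, fibrations and weak equivalences are all created by the forgetful functor to $\omega\cat_{\mathrm{can}}$, and the equivalence commutes with these forgetful functors. Composing the $\ell$ factors then gives the claimed Quillen adjunction. As a consistency check on the left adjoint directly, $\Sigma^\ell$ preserves cofibrations by \cref{SigmaPreservesCofibrations}, and it preserves acyclic cofibrations because each $\Sigma$ does by \cref{SuspensionQuillenCat}.

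The main obstacle I expect is bookkeeping rather than homotopy theory. One has to confirm that the iterated right adjoint $(-)^{(\ell)}(-)$ from \cref{IteratedSuspensionAdjunctionCat} really is the composite of the sliced hom functors. One also has to confirm that the base point of $\partial\cC[j+1]$ obtained through $\Sigma$ (namely $\bot,\top$ together with the suspended boundary) agrees with the one used in the undercategory structure. Both points follow from the construction in the proof of \cref{IteratedAdjunction}.
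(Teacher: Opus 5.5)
Your proposal is correct and follows the paper's argument: the paper invokes \cref{IteratedQuillenAdjunction} (with $c_0=\varnothing$, $F=\Sigma$), and its inductive proof composes the same sliced Quillen adjunctions and the same canonical slice-of-slice identification that you describe, so you are unrolling that induction and writing out the Quillen checks. Your closing ``consistency check'' is in fact already a complete proof by itself: $\Sigma^\ell$ is a left adjoint by \cref{IteratedSuspensionAdjunctionCat}, and it preserves cofibrations and acyclic cofibrations, which can be checked on underlying maps.
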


\begin{proof}
This is an instance of \cref{IteratedQuillenAdjunction}, applied to
$\cC=\omega\cat_{\mathrm{can}}$, $c_0=\varnothing$, $F=\Sigma$, and $G=(-)^{(-)}(-)$,
using the suspension-hom Quillen adjunction of \cref{SuspensionQuillenCat}.
\end{proof}

\begin{lem}
\label{IteratedQuillenAdjunction}
Let $\cM$ be a model category, let $F\colon\cM\to\cM$ be a functor, and let $c_0$ be an initial object of $\cM$. If $F$ induces a Quillen adjunction
\begin{equation}
\label{eq:original-quillen-adjunction}
F\colon
\cM
\simeq
{}^{c_0/}\cM
\rightleftarrows
{}^{F(c_0)/}\cM
\noloc G,
\end{equation}
then for all $\ell\geq0$ there is a Quillen adjunction
\[
F^\ell\colon\cM\simeq {}^{c_0/}\cM
\rightleftarrows
{}^{F(c_0)/}\cM
\rightleftarrows
{}^{F^2(c_0)/}\cM
\rightleftarrows
\cdots
\rightleftarrows
{}^{F^\ell(c_0)/}\cM\noloc G^\ell.
\]
where, for $\ell\geq0$, the undercategory ${}^{F^\ell(c_0)/}\cM$ is equipped with the induced model structure of \cite{HirschhornOvercategories}.
\end{lem}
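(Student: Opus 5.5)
The proof will mirror that of \cref{IteratedAdjunction}, proceeding by induction on $\ell\geq0$, with the additional bookkeeping that every adjunction in the composite is Quillen. For $\ell=0$, since $c_0$ is initial, the forgetful functor ${}^{c_0/}\cM\to\cM$ is an isomorphism of categories. Under it the induced model structure of \cite{HirschhornOvercategories} on ${}^{c_0/}\cM$ corresponds exactly to that of $\cM$, because a map in the undercategory is a cofibration, fibration or weak equivalence precisely when its underlying map is. So the identity is a Quillen equivalence. For $\ell=1$ the statement is the hypothesis \eqref{eq:original-quillen-adjunction}.

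For the inductive step, assume $F^\ell\colon\cM\rightleftarrows{}^{F^\ell(c_0)/}\cM\noloc G^\ell$ is a Quillen adjunction. We slice the given Quillen adjunction $F\colon\cM\rightleftarrows{}^{F(c_0)/}\cM\noloc G$ under the object $F^\ell(c_0)$ of $\cM$. The slice of an adjunction $L\dashv R$ under an object $a$ is the adjunction
\[
{}^{a/}\cM\rightleftarrows{}^{L(a)/}\left({}^{F(c_0)/}\cM\right),
\]
whose left adjoint is $(a\to x)\mapsto(L a\to L x)$ and whose right adjoint sends $(La\to y)$ to the composite $a\to RLa\to Ry$ built from the unit. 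In the undercategory model structures both categories carry cofibrations, fibrations and weak equivalences created by the forgetful functors. Hence the sliced left adjoint preserves cofibrations and acyclic cofibrations, because on underlying maps it is just $F$, which does so by hypothesis. This shows the sliced adjunction is Quillen.

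Next we identify ${}^{F^{\ell+1}(c_0)/}\left({}^{F(c_0)/}\cM\right)$ with ${}^{F^{\ell+1}(c_0)/}\cM$, via the structure map $F(c_0)\to F^{\ell+1}(c_0)$ obtained by applying $F$ to the map $c_0\to F^\ell(c_0)$. This is an isomorphism of categories, as in \eqref{eq:slice-equivalence-adju}. It respects the induced model structures, since on both sides the three classes of maps are detected in $\cM$. Composing the three Quillen adjunctions then gives the claimed Quillen adjunction $F^{\ell+1}\dashv G^{\ell+1}$, because a composite of Quillen adjunctions is Quillen.

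The only delicate point is the slicing step. There one must check that the sliced left adjoint is computed on underlying objects by $F$, since the forgetful functor ${}^{F(c_0)/}\cM\to\cM$ is involved. One must also check that the undercategory model structures of \cite{HirschhornOvercategories} are genuinely created by the forgetful functor to $\cM$, including for an iterated slice. Both facts are standard, so I expect this step to amount to a careful unwinding of definitions rather than a real obstacle.
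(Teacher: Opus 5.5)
Your proposal is correct and follows the same route as the paper: induction on $\ell$ with base cases $\ell=0,1$, then composing the inductive Quillen adjunction with the slice of the original Quillen adjunction under $F^\ell(c_0)$ and the canonical identification ${}^{F^{\ell+1}(c_0)/}\bigl({}^{F(c_0)/}\cM\bigr)\cong{}^{F^{\ell+1}(c_0)/}\cM$. Your extra checks fill in details the paper leaves implicit: the undercategory model structures are created by the forgetful functors, so the sliced left adjoint and the identification are Quillen.
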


\begin{proof}
We show this by induction on $\ell\geq0$. For $\ell=0$, using that $c_0$ is initial, this is just the canonical Quillen equivalence
\[
\cM\simeq{}^{c_0/}\cM,
\]
and for $\ell=1$ this is the given Quillen adjunction \eqref{eq:original-quillen-adjunction}. Now, by induction, assume that we have constructed a Quillen adjunction
\begin{equation}
\label{eq:quilleninduction}
F^\ell\colon\cM\rightleftarrows{}^{F^\ell(c_0)/}\cM\noloc G^\ell.
\end{equation}
Furthermore, by taking the slice of the original Quillen adjunction
\eqref{eq:original-quillen-adjunction}
under the object $F^\ell(c_0)$ of $\cM$, we obtain a Quillen adjunction
\begin{equation}
\label{eq:sliced-quillen-adjunction}
{}^{F^\ell(c_0)/}\cM\rightleftarrows{}^{F(F^\ell(c_0))/}\left({}^{F(c_0)/}\cM\right).
\end{equation}
Using the identification $F(F^\ell(c_0))=F^{\ell+1}(c_0)$,
we also obtain a canonical Quillen equivalence
\begin{equation}
\label{eq:slice-equivalence-quillen}
{}^{F(F^\ell(c_0))/}\left({}^{F(c_0)/}\cM\right)\simeq{}^{F^{\ell+1}(c_0)/}\cM.
\end{equation}
Composing the Quillen adjunctions
\eqref{eq:quilleninduction}, \eqref{eq:sliced-quillen-adjunction}, and \eqref{eq:slice-equivalence-quillen}, we obtain the Quillen adjunction
\[
F^{\ell+1}\colon \cM \rightleftarrows {}^{F^{\ell+1}(c_0)/}\cM \noloc G^{\ell+1},
\]
as desired.
\end{proof}

\begin{rmk}
\label{TruncationQuillenCat}
It follows from \cite[Theorem 6.1]{LMW} that the adjunction
\[
\mathrm{tr}_n\colon \omega\cat\rightleftarrows n\cat\noloc I_n
\]
is a Quillen adjunction. Instead, the adjunction
\[
I_n\colon n\cat\rightleftarrows\omega\cat\noloc\mathrm{core}_n
\]
is not. In fact, the core functor $\mathrm{core}_n$ does not preserve weak equivalences.
\end{rmk}

\subsection{Contractible gaunt $\omega$-categories}
\label{ContractibleGaunt}

In this subsection we collect several properties of $\omega$-categories which are both contractible and gaunt, whose definitions we now recall.

\begin{defn}
  \label{defn:contr-gaunt}
Given an $\omega$-category $\cD$, we say that it is:
    \begin{itemize}[leftmargin=*]
        \item \emph{contractible} if $\cD\to[0]$ is a weak equivalence in the model structure $\omega\cat_{\mathrm{can}}$.
        Equivalently, $\cD$ is contractible if it has the right lifting property with respect to the cell boundary inclusions $\partial\cC[k]\hookrightarrow\cC[k]$ in $\omega\cat$ for all $k\geq0$.
      \item \emph{gaunt} if for all $k\geq0$ the $k$-core $\mathrm{core}_k\cD$ is a gaunt $k$-category; that is, $\mathrm{core}_k\cD$ has no non-identity isomorphisms.
Equivalently, $\cD$ is gaunt if it has the right lifting property with respect to the canonical map $\Sigma^k\cI\to\Sigma^k[0]\cong\cC[k]$ in $\omega\cat$ for all $k\geq0$.
    \end{itemize}
\end{defn}

\begin{prop}
\label{PolygraphIsGaunt}
If an $\omega$-category $\cD$ is a polygraph, then $\cD$ is gaunt.
\end{prop}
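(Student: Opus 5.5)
A polygraph is gaunt; the natural strategy is to show that polygraphs are free in each dimension, so that no non-trivial invertible cells can exist.

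By the lifting characterization in \cref{defn:contr-gaunt}, it suffices to show that for every $k\geq0$ and every $k$-core, any $(k+1)$-cell of $\cD$ that is invertible in $\mathrm{core}_{k+1}\cD$ is an identity. Equivalently, any map $\Sigma^k\cI\to\cD$ factors through $\Sigma^k\cI\to\cC[k]$.

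The tool I would use is the \emph{length} (or size) function on a polygraph. Since $\cD$ is a polygraph, for each $m\geq1$ the $m$-cells of $\cD$ are freely generated, as an $m$-category over the $(m-1)$-skeleton, by the set of $m$-generators. Hence there is a well-defined function $|\cdot|_m\colon\cD_m\to\mathbb N$ counting the number of occurrences of $m$-generators in a cell. It is defined inductively on the free construction and is determined by the universal property: a functor from the free $m$-category to the $m$-category $\mathrm{ch}$-type structure $\Sigma^{m-1}(\mathbb N,+)$ suspended appropriately, sending each generator to $1$. Concretely, I would construct an $\omega$-functor $\cD\to\Sigma^{m-1}B\mathbb N$, where $B\mathbb N$ is the one-object category with morphisms $\mathbb N$ under addition. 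This functor kills all cells of dimension $<m$, and in dimension $m$ it sends generators to $1$ and sends composites in any direction $\circ_j$ (for $j<m$) to sums. On cells of dimension $>m$ it is determined by the target. The existence of this functor follows from the cellular presentation by successive pushouts along $\partial\cC[n]\hookrightarrow\cC[n]$: at stage $m$ we send each new generator to $1$, at stages $n>m$ there is a unique extension because $\Sigma^{m-1}B\mathbb N$ has only identity cells above dimension $m$ and is thin there, and at stages $n<m$ everything goes to the unit. The key property is that identity $m$-cells have size $0$, while a cell of size $0$ is an identity. The second statement needs freeness: in the free $m$-category over the $(m-1)$-skeleton, every non-identity $m$-cell is a composite of whiskered generators, and so has size at least $1$.

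With this in hand, suppose $f\colon a\to b$ is an $(k+1)$-cell with inverse $g$ up to identities in $\mathrm{core}_{k+1}\cD$, so that $g\circ_k f=\id_a$ and $f\circ_k g=\id_b$. Applying the size functor for $m=k+1$ gives $|f|+|g|=0$, hence $|f|=0$, hence $f$ is an identity. This produces the required factorization of $\Sigma^k\cI\to\cD$, and hence the right lifting property against $\Sigma^k\cI\to\cC[k]$ for every $k$.

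The main obstacle is the claim that size $0$ forces an identity, that is, the precise normal form of cells in a polygraph. I would handle it by citing the standard description of free extensions in \cite[\textsection14--16]{PolygraphBook}: every $m$-cell of the free extension is either an identity or an expression built from generators and lower cells. An alternative is to give a counting argument, showing that the sub-$m$-category of cells of size $0$ is exactly the image of the $(m-1)$-skeleton, because the set of identities is closed under all compositions and is preserved by the pushout construction.
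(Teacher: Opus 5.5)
Your strategy is the paper's: a weight on $(k+1)$-cells that is additive under $\circ_k$ and vanishes only on identities, applied to $x\circ_k x^{-1}=\id$ to force $x=\id$. The paper uses Guetta's per-generator counts $w_\alpha$ on $\mathrm{core}_{k+1}\cD$. Your total size is just $\sum_\alpha w_\alpha$, so nothing is gained or lost there.

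\textbf{The construction of your size functor fails as written.} An $\omega$-functor from $\cD$ into an $\omega$-category with only identity cells above dimension $m$ must send the source and target of every $(m+1)$-generator to the same $m$-cell. In other words, they would need equal size, and this is false in general. Take $1$-generators $f\colon a\to b$, $g\colon b\to c$, $h\colon a\to c$ and a $2$-generator $\alpha\colon h\Rightarrow g\circ_0 f$. For $m=1$ there is no extension over $\alpha$, since $|h|=1\neq 2=|g\circ_0 f|$. So the claim that ``at stages $n>m$ there is a unique extension'' is wrong. Having ``only identities'' above dimension $m$ and being ``thin'' there are different conditions, and only thinness in the chaotic sense of $\mathrm{ch}_m$ gives extensions.

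A separate, smaller point: with the paper's two-point suspension, $\Sigma^{m-1}B\mathbb N$ does not collapse the cells below dimension $m$. What you want is the $m$-category with one cell in each dimension $<m$ and $\mathbb N$ as $m$-cells, every $\circ_j$ being addition. This is a strict $m$-category because $\mathbb N$ is commutative.

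\textbf{The repair is exactly the reduction the paper makes first.} Only $m$-cells and $\circ_{m-1}$ enter the argument, so use the adjunction of \cref{TruncationsAdjunctionCat} to replace $\cD$ by $\mathrm{core}_m\cD$. Since $\mathrm{core}_m$ is cocontinuous (\cref{ChaosAdjunctionCat}) and turns attachments of cells of dimension $>m$ into isomorphisms, $\mathrm{core}_m\cD$ is the free $m$-category on the generators of dimension $\leq m$. The size functor then exists by the universal property of the pushout presentation. Equivalently, map $\cD$ into $\mathrm{ch}_m$ of the delooping of $\mathbb N$.

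\textbf{The step you flag as the main obstacle.} The paper simply cites Guetta for the fact that size zero forces an identity. Your alternative sketch aims at the wrong statement: closure of identities under composition is not what is needed. A correct argument is short. In $\mathrm{core}_m\cD$, the $m$-cells that are identities or have positive size, together with all lower cells, form a sub-$m$-category, because size is additive under every $\circ_j$ and identities compose to identities. This sub-$m$-category contains the $(m-1)$-dimensional part and all $m$-generators, so by the universal property of the pushout presentation it is everything.

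With these corrections, your proof is the paper's proof.
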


\begin{proof}
To show that $\cD$ is gaunt, it suffices to solve the following lifting problems in $\omega\cat$ for all $k\geq0$:
\[\begin{tikzcd}
\Sigma^k\cI\arrow[r,"{}"]\arrow[d]&\cD\arrow[d]\\
\Sigma^k[0]\arrow[ru,dashed,"?"]\arrow[r]&{[0]}
\end{tikzcd}
\]
Using the adjunction
from \cref{TruncationsAdjunctionCat},
we see that the lifting problem is equivalent to the lifting problem in $(k+1)\cat$
\[\begin{tikzcd}
\Sigma^k\cI\arrow[r,"{}"]\arrow[d]&\mathrm{core}_{k+1}\cD\arrow[d]\\
\Sigma^k[0]\arrow[ru,dashed,"?"]\arrow[r]&{[0]}
\end{tikzcd}
\]
Since $\cD$ is an $\omega$-category generated by a polygraph, and using the fact that the core functor preserves pushouts from \cref{ChaosAdjunctionCat}, we see that $\mathrm{core}_{k+1}\cD$ is a polygraph.
Then $\mathrm{core}_{k+1}\cD$ admits a $(k+1)$-basis $E_{k+1}$ in the sense of  \cite[Definition~1.2.1]{GuettaThesis}.
By \cite[Proposition~1.3.5]{GuettaThesis}, for all $\alpha$ in $E_{k+1}$ there exists a function
$w_\alpha\colon\cD_{k+1}\to\mathbb N$
which essentially counts the occurrences of the basis element $\alpha$ in the expression of a given $(k+1)$-cell of $\cD$, and has in particular the following properties:
\begin{enumerate}
    \item Given a $(k+1)$-cell $f$ of $\cD$, we have that $w_\alpha(f)=0$ for all $\alpha$ in $E_{k+1}$ if and only if $f$ is an identity $(k+1)$-cell (cf.~\cite[Remark 1.8.13]{GuettaThesis}).
    \item Given composable $(k+1)$-cells $g$ and $h$ in $\cD$, we have that $w_{\alpha}(h\circ g)=w_{\alpha}(g)+w_{\alpha}(h)$.
\end{enumerate}
If $x$ denotes the cell determined by the canonical map $\cC[k+1]\hookrightarrow\Sigma^k\cI\to\cD$, we have that $x$ is invertible and
\[
x\circ x^{-1}=\mathrm{id}.
\]
Then, for all $\alpha$ in $E_{k+1}$, we have $w_\alpha(x)+ w_{\alpha}(x^{-1})=w_\alpha(\mathrm{id})$, and further
\[0\leq w_\alpha(x)=w_{\alpha}(\mathrm{id})-w_{\alpha}(x^{-1})
\leq w_{\alpha}(\mathrm{id})=0.\]
Then, $w_\alpha(x)=0$ for all $\alpha$ in $E_{k+1}$. Then $x$ must be an identity $(k+1)$-cell. This means that the desired lifting problem admits a lift, concluding the proof.
\end{proof}

\begin{prop}
\label{E:exists}
The $\omega$-category $\cE$ constructed in \cite[Construction~1.5.13]{ORsurvey}---further studied in \cite[\textsection1.4]{HLOR}---is an $\omega$-category which has two objects, is gaunt and is contractible.
\end{prop}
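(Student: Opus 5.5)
I will first recall the construction from \cite[Construction~1.5.13]{ORsurvey}. There $\cE$ is built as a colimit of a sequence of polygraphs
\[
\cE^{(0)}\hookrightarrow\cE^{(1)}\hookrightarrow\cE^{(2)}\hookrightarrow\cdots.
\]
The stage $\cE^{(0)}$ is $\partial\cC[1]$, with objects $x,y$. At stage $m$ one adjoins, along boundary inclusions, the cells witnessing the coinductive equivalence data in dimension $m$:
\begin{itemize}
\item for $m=1$, the $1$-cells $f\colon x\to y$ and $g\colon y\to x$;
\item for $m=2$, the $2$-cells $\varepsilon\colon fg\Rightarrow\id_y$ and $\eta\colon \id_x\Rightarrow gf$, together with their weak inverses;
\item in general, cells in dimension $m+1$ that witness the invertibility of the $m$-cells added at the previous stage.
\end{itemize}
Since cells are always adjoined along $\partial\cC[k]\hookrightarrow\cC[k]$ in non-decreasing dimension, and a transfinite composite of such pushouts is again one, $\cE$ is a polygraph. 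By construction it has exactly the two objects $x,y$, because no $0$-cells are added after stage $0$.

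Gauntness then follows immediately from \cref{PolygraphIsGaunt}, since $\cE$ is a polygraph. Nothing more is needed for that part.

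The main step is contractibility. Following \cref{defn:contr-gaunt}, I would show that $\cE$ has the right lifting property against $\partial\cC[k]\hookrightarrow\cC[k]$ for every $k\geq0$.
\begin{itemize}
\item For $k=0$, the lift exists because $\cE$ is nonempty.
\item For $k\geq1$, transpose along the iterated suspension--hom adjunction of \cref{IteratedSuspensionAdjunctionCat}, as in the proof of \cref{AcyclicCofOmegaCat0}. The lifting problem becomes one of producing a $k$-cell between two given parallel $(k-1)$-cells.
\item The key input is the coinductive invertibility in \cite[\textsection1.4]{HLOR}. Every cell of $\cE$ is coinductively invertible, meaning it is an equivalence whose witnesses are again equivalences, and this property is preserved under composition.
\item Hence every hom $\omega$-category $\cE(a,b)$ is again ``coinductively connected''. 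Concretely, $\cE(x,y)$ and $\cE(y,x)$ each contain a cell, namely $f$ and $g$, and any two parallel cells are related by a cell built from the adjoined witnesses.
\end{itemize}

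I expect this last point to be the main obstacle. The issue is that an arbitrary pair of parallel cells of $\cE$ is a composite of generators, not a generator itself. So one must show, by induction on dimension, that the set of cells admitting a connecting cell to a fixed one is closed under all compositions and contains all generators.

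I would handle this by arguing that each generator $c$ comes equipped with a chosen weak inverse $\bar c$ and witnesses relating $c\bar c$ and $\bar c c$ to identities. From these witnesses, any parallel composite can be rewritten, up to a cell, into a normal form, so that connecting cells exist between any two parallel cells. If direct rewriting becomes unwieldy, I would instead cite the result of \cite[\textsection1.4]{HLOR}, which identifies $\cE$ as a contractible object via its universal property as the walking coinductive equivalence. Since that universal property is formulated through lifting against cell boundaries, it gives exactly the lifts required here.
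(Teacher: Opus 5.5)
Your treatment of the two objects and of gauntness ($\cE$ is a polygraph, so \cref{PolygraphIsGaunt} applies) is exactly what the paper does. The gap is in contractibility, where your direct argument does not work. The step ``every cell of $\cE$ is coinductively invertible, hence every hom is coinductively connected'' does not follow, because invertibility of individual cells says nothing about connecting \emph{arbitrary} parallel pairs. Consider $\cE\amalg_{\partial\cC[1]}\cE$. There all generators are still coinductively invertible, and hence, by the closure under composition you invoke, so are all cells. Yet there is no $2$-cell between the two copies $f,f'$ of the generating $1$-cell. To see this, map to the groupoid with objects $x,y$, every hom-set equal to $\mathbb{Z}$, and only identity higher cells. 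Send the first copy with winding $0$ ($f\mapsto 0$, $g\mapsto 0$) and the second with winding $1$ ($f'\mapsto 1$, $g'\mapsto -1$), and all higher generators to identities. These functors exist because $\cE$ is free and parallel cells have equal images. A $2$-cell $f\Rightarrow f'$ would then give a $2$-cell $0\Rightarrow 1$, which does not exist. So the step ``rewrite any parallel composite into a normal form, with connecting cells in both directions'' is not a routine consequence of having witnesses. It is the entire nontrivial content, and in your proposal it is only asserted.

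Falling back on \cite{HLOR} is the right move, and it is all the paper does: it invokes \cite[Theorem~1.33]{HLOR}, where contractibility of $\cE$ is proved as a genuine theorem. However, your justification of that citation is wrong. The universal property of $\cE$ as the walking coinductive equivalence describes $\omega$-functors \emph{out of} $\cE$: these correspond to $1$-cells equipped with a full system of coinductive witnesses. Contractibility is instead the right lifting property of $\cE\to[0]$ against $\partial\cC[k]\hookrightarrow\cC[k]$, a statement about maps \emph{into} $\cE$, and no universal property of that kind can produce those lifts. Drop the rewriting sketch and the universal-property remark, and simply cite \cite[Theorem~1.33]{HLOR} for contractibility. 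With that change your proof coincides with the paper's.
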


\begin{proof}
The fact that this $\omega$-category has two objects is by construction. It was shown in \cite[Theorem~1.33]{HLOR} that the $\omega$-category $\cE$ is contractible. By construction, the $\omega$-category $\cE$ is a polygraph, hence gaunt by \cref{PolygraphIsGaunt}.
\end{proof}

We will need the following properties of contractible $\omega$-categories.

\begin{lem}
\label{E:hom}
If $\cD$ is a gaunt (resp.~contractible) $\omega$-category,
then for all objects $x,y\in\cD$,
the hom-$\omega$-category $\cD(x,y)$ is gaunt
(resp.~contractible).
\end{lem}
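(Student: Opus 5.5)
The plan is to use the lifting characterizations of gauntness and contractibility from \cref{defn:contr-gaunt}, and to transport lifting problems along the suspension--hom adjunction of \cref{SuspensionAdjunctionCat}, exactly as in the proof of \cref{AcyclicCofOmegaCat0}. The key observation is that a map $A\to\cD(x,y)$ in $\omega\cat$ corresponds to a map $(\Sigma A,\bot,\top)\to(\cD,x,y)$ in ${}^{\partial\cC[1]/}\omega\cat$. Moreover, the maps we need to lift against suspend to maps of the same type: $\Sigma\partial\cC[k]\cong\partial\cC[k+1]$, $\Sigma\cC[k]\cong\cC[k+1]$, and $\Sigma(\Sigma^k\cI)=\Sigma^{k+1}\cI$, $\Sigma(\Sigma^k[0])=\Sigma^{k+1}[0]$. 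Each of these is a map under $\partial\cC[1]$ which is the identity on objects.

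\textbf{Contractible case.} Fix $k\geq0$ and a square with $\partial\cC[k]\to\cD(x,y)$ on top, $\cC[k]\to[0]$ on the bottom, and the boundary inclusion on the left. Note that $[0]$ is the hom of the terminal $\omega$-category, so $\cD(x,y)\to[0]$ is the image of $(\cD,x,y)\to([0],*,*)$ under the hom functor. Transposing, the square becomes a lifting problem of $\partial\cC[k+1]\hookrightarrow\cC[k+1]$ against $\cD\to[0]$ in ${}^{\partial\cC[1]/}\omega\cat$. It is equivalent to lift in ${}^{\partial\cC[1]/}\omega\cat$ or in $\omega\cat$, since the maps under $\partial\cC[1]$ are identities on objects: a lift in $\omega\cat$ automatically respects the chosen objects $\bot,\top$, because $\partial\cC[1]\to\partial\cC[k+1]\to\cC[k+1]$ already pins them down. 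The lift exists because $\cD$ is contractible, and transposing back gives the desired lift for $\cD(x,y)$.

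\textbf{Gaunt case.} This is identical. A map $\Sigma^k\cI\to\cD(x,y)$ transposes to a map $\Sigma^{k+1}\cI\to\cD$ sending $\bot,\top$ to $x,y$. Gauntness of $\cD$ provides an extension along $\Sigma^{k+1}\cI\to\Sigma^{k+1}[0]\cong\cC[k+1]$. This extension is automatically a map under $\partial\cC[1]$, since the map $\Sigma^{k+1}\cI\to\Sigma^{k+1}[0]$ is bijective on objects. Transposing back yields the extension along $\Sigma^k\cI\to\Sigma^k[0]$.

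\textbf{Main obstacle.} The main point needing care is justifying that a lifting problem in $\omega\cat$ with target $\cD(x,y)$ is the adjoint transpose of one in the undercategory, including the identification of $[0]$ with the hom $[0](*,*)$. I also need to make sure that the suspension of the generating maps is compatible with the basepoints $\bot,\top$. Both facts follow from naturality of the adjunction and the explicit description of $\Sigma$ in \cref{SuspensionCat}.
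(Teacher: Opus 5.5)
Your proposal is correct and follows the paper's proof essentially verbatim. Both transpose the lifting problems along the suspension--hom adjunction into lifting problems of $\partial\cC[k+1]\hookrightarrow\cC[k+1]$ (resp.\ $\Sigma^{k+1}\cI\to\Sigma^{k+1}[0]$) against $\cD\to[0]$ in ${}^{\partial\cC[1]/}\omega\cat$, then note that because these maps are bijective on objects (indeed, simply because they are maps under $\partial\cC[1]$ and the upper triangle must commute), any lift in $\omega\cat$ is automatically a lift in the undercategory.
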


\begin{proof}
Assume that $\cD$ is gaunt (resp.~contractible).
To show that $\cD(x,y)$ is gaunt (resp.~contractible) we need to show that we can solve the following lifting problem in $\omega\cat$ for all $k\geq0$:
\[
\begin{tikzcd}
\Sigma^k\cI\arrow[r,"{}"]\arrow[d]&\cD(x,y)\arrow[d]\\
\Sigma^k[0]\arrow[ru,dashed,"?"]\arrow[r]&{[0]}
\end{tikzcd}
\quad\quad
\left(\text{resp.~}
\begin{tikzcd}
\partial\cC[k]\arrow[r,"{}"]\arrow[d,hook]&\cD(x,y)\arrow[d]\\
\cC[k]\arrow[ru,dashed,"?"]\arrow[r]&{[0]}
\end{tikzcd}
\right)
\]
Using the adjunction from \cref{SuspensionAdjunctionCat},
we see that the two lifting problems are equivalent to the following lifting problem in ${}^{\partial\cC[1]/}\omega\cat$
\[
\begin{tikzcd}
(\Sigma^{k+1}\cI,\bot,\top)\arrow[r,"{}"]\arrow[d]&(\cD,x,y)\arrow[d]\\
(\Sigma^{k+1}[0],\bot,\top)\arrow[ru,dashed,"?"]\arrow[r]&{([0],0,0)}
\end{tikzcd}
\quad\quad
\left(\text{resp.~}
\begin{tikzcd}
(\partial\cC[k+1],\bot,\top)\arrow[r,"{}"]\arrow[d,hook]&(\cD,x,y)\arrow[d]\\
(\cC[k+1],\bot,\top)\arrow[ru,dashed,"?"]\arrow[r]&{([0],0,0)}
\end{tikzcd}
\right)
\]
Since the left vertical maps in both diagrams are bijective on objects,
we see that it
suffices to solve it in $\omega\cat$
\[
\begin{tikzcd}
\Sigma^{k+1}\cI\arrow[r,"{}"]\arrow[d]&\cD\arrow[d]\\
\Sigma^{k+1}[0]\arrow[ru,dashed,"?"]\arrow[r]&{[0]}
\end{tikzcd}
\quad\quad
\left(\text{resp.~}
\begin{tikzcd}
\partial\cC[k+1]\arrow[r,"{}"]\arrow[d,hook]&\cD\arrow[d]\\
\cC[k+1]\arrow[ru,dashed,"?"]\arrow[r]&{[0]}
\end{tikzcd}
\right)
\]
This lifting problem can be solved because $\cD$ is by assumption gaunt (resp.~contractible), concluding the proof.
\end{proof}

\begin{lem}
\label{E:Parallel}
An $\omega$-category $\cD$ is contractible if and only if
for all $f$ and $g$ parallel $(k-1)$-morphisms\footnote{with the convention that ``a pair of parallel ($-1$)-arrows'' amounts to no data} in $\cD$ for $k\geq0$ there exists a $k$-morphism $f\to g$ in $\cD$.
\end{lem}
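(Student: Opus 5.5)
The plan is to use the lifting characterization of contractibility from \cref{defn:contr-gaunt}: $\cD$ is contractible if and only if every map $\partial\cC[k]\to\cD$ extends along the boundary inclusion $\partial\cC[k]\hookrightarrow\cC[k]$, for all $k\geq0$. The statement is then a matter of unwinding what such maps and extensions are in terms of cells of $\cD$.

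The key identification is this. For $k\geq0$, a map $\partial\cC[k]\to\cD$ is the same as a pair $(f,g)$ of parallel $(k-1)$-morphisms of $\cD$. By the universal property of the globular cell $\cC[k]$, an extension of such a map to $\cC[k]$ is the same as a $k$-morphism $f\to g$.

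The boundary cases are handled as follows.
\begin{itemize}
\item For $k=0$ we have $\partial\cC[0]=\varnothing$. A map $\varnothing\to\cD$ is the empty datum, which is the footnote's convention for a pair of parallel $(-1)$-arrows. An extension to $\cC[0]$ is an object of $\cD$, so the condition says exactly that $\cD$ is nonempty.
\item For $k=1$ we have $\partial\cC[1]=\cC[0]\sqcup\cC[0]$. A map from it picks two objects $f,g$; these are automatically parallel, since $0$-cells have no boundary. An extension is a $1$-morphism $f\to g$.
\end{itemize}

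For $k\geq2$ I would use the description $\partial\cC[k]\cong\cC[k-1]\cup_{\partial\cC[k-1]}\cC[k-1]$. This is the pushout of the source and target inclusions, and it is also visible from $\partial\cC[k]\cong\Sigma\partial\cC[k-1]$ by induction. A map out of it is therefore two $(k-1)$-cells $f,g$ with $s_{k-2}f=s_{k-2}g$ and $t_{k-2}f=t_{k-2}g$; by the globular identities this means they agree on all lower sources and targets, which is what parallel means. Since $\cC[k]$ is freely generated by one $k$-cell with boundary $\partial\cC[k]$, an extension is exactly a $k$-cell $x$ with $s_{k-1}x=f$ and $t_{k-1}x=g$.

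With this dictionary in place, both implications follow immediately by translating the lifting condition into the cell condition and back, for each $k$. The only mild obstacle is stating the universal properties of $\cC[k]$ and $\partial\cC[k]$ precisely. I would cite these as standard facts about globes, for instance as representing objects for $k$-cells and for pairs of parallel $(k-1)$-cells in the globular presentation of $\omega\cat$.
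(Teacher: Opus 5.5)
Your proposal is correct and follows the same route as the paper. The paper likewise rewrites contractibility as the right lifting property of $\cD\to[0]$ against the boundary inclusions $\partial\cC[k]\hookrightarrow\cC[k]$, and then reads off that a map $\partial\cC[k]\to\cD$ is a pair of parallel $(k-1)$-morphisms and a lift is a $k$-morphism between them. Your explicit treatment of $k=0,1$, and your description of $\partial\cC[k]$ as a pushout of two copies of $\cC[k-1]$ along $\partial\cC[k-1]$, spell out the identification that the paper leaves implicit.
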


\begin{proof}
The $\omega$-category $\cD$ is contractible if and only if the map $\cD\to\cC[0]$ is an acyclic fibration; that is, for all $k\geq0$
there is a solution to the lifting problem in $\omega\cat$
\[
\begin{tikzcd}
\partial\cC[k]\arrow[r,"{[f,g]}"]\arrow[d,hook]&\cD\arrow[d,"\simeq"]\\
\cC[k]\arrow[ru,dashed,"?"]\arrow[r]&{[0]}
\end{tikzcd}
\]
This is equivalent to saying that, for all $f$ and $g$ parallel $(k-1)$-morphisms in $\cD$ there exists a $k$-morphism $f\to g$ in $\cD$, concluding the proof.
\end{proof}

\section{About marked simplicial sets}

\label{MarkedSimplicialSets}

\subsection{Background on marked simplicial sets}

A \emph{marked simplicial set} $X$ consists of a simplicial set endowed with a collection of subsets $tX_k\subseteq X_k$ (sometimes also denoted $X_{[k]_t}\subseteq X_k$) of \emph{marked} $k$-simplices
for $k>0$ containing all degenerate $k$-simplices. Let $m\sset$ denote the category of marked simplicial sets (originally considered in \cite{VerityComplicialI,VerityComplicialAMS} under the name of \emph{stratified simplicial sets}) and simplicial maps that preserve the marking.

Relevant constructions on marked simplicial sets (about varying $n$):

\begin{notn}
\label{TruncationsMarked}
Let $X$ be a marked simplicial set.
\begin{itemize}[leftmargin=*]
    \item For $n\geq0$, the marked simplicial set $\mathrm{th}_nX$ is the $n$-th \emph{thinification} of $X$ from \cite[\textsection3]{VerityComplicialAMS} or \cite[Notation 13]{VerityComplicialI}. It is obtained from $X$ by further marking all simplices in dimensions higher than $n$.
    This construction defines a functor $\mathrm{th}_n\colon m\sset\to m\sset$.
    \item For $n\geq0$, the marked simplicial set $\mathrm{sp}_nX$ is the $n$-th \emph{superstructure} of $X$ from \cite[\textsection 5.2]{VerityComplicialAMS} or \cite[Notation 13]{VerityComplicialI}.
    It is obtained from $X$ by forgetting all $d$-simplices that have a non-marked $k$-dimensional face with $n<k\leq d$.
    This construction defines a functor $\mathrm{sp}_n\colon m\sset\to m\sset$.
\end{itemize}
\end{notn}

\begin{rmk}
\label{TruncationsAdjunctionMarked}
We know from \cite[Definition 112]{VerityComplicialAMS} or \cite[Notation 13]{VerityComplicialI} that, for all $n\geq0$, there is an adjunction of categories
\[
\mathrm{th}_n\colon m\sset\rightleftarrows m\sset\noloc\mathrm{sp}_n.
\]
\end{rmk}

\begin{rmk}
\label{SpPreservesFilteredColimits}
For $n\geq0$, we see by direct inspection that the functor $\mathrm{sp}_n\colon m\sset\to m\sset$ preserves filtered colimits. Indeed, a simplex in a filtered colimit is marked precisely when it is represented by a simplex that becomes marked at some stage of the diagram.
\end{rmk}

\begin{rmk}
There are adjunctions of categories
\[
(-)^{\flat}\colon \sset\rightleftarrows m\sset\noloc U\quad\text{ and }\quad U\colon m\sset\rightleftarrows \sset\noloc (-)^{\sharp}.
\]
Here, the functor $U\colon m\sset\to\sset$ takes the underlying simplicial set of a marked simplicial set, while the functor $(-)^{\flat},(-)^{\sharp}
\colon \sset\to m\sset$ endow a simplicial set with the minimal marking (just degenerate simplices) and the maximal marking (all simplices in positive dimension), respectively. When clear from the context, for a simplicial set $X$ we denote $X^{\flat}$ just by $X$.
\end{rmk}

\begin{lem}
\label{ThOfEntire}
Given an \emph{entire} map $A \to A'$ in $m\sset$
(i.e., a map of marked simplicial sets whose underlying map of simplicial sets is an isomorphism),
there is a pushout in $m\sset$:
\[
\begin{tikzcd}
A \arrow[r]\arrow[d]& A'\arrow[d]\\
\mathrm{th}_nA\arrow[r] & \mathrm{th}_nA'
\end{tikzcd}
\]
\end{lem}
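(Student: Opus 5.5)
The plan is to compute pushouts in $m\sset$ explicitly and compare them directly with $\mathrm{th}_nA'$. Recall that colimits in $m\sset$ are formed by taking the colimit of the underlying simplicial sets and marking a simplex exactly when it is the image of a marked simplex of one of the pieces. Write $f\colon A\to A'$ for the entire map. The square in the statement commutes because $\mathrm{th}_n$ is a functor and the vertical maps are the units $X\to\mathrm{th}_nX$, which are the identity on underlying simplicial sets. So I only need to check that the canonical comparison map $P\coloneq A'\amalg_A\mathrm{th}_nA\to\mathrm{th}_nA'$ is an isomorphism.

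\emph{Underlying simplicial sets.} Since $U\colon m\sset\to\sset$ is a left adjoint (with right adjoint $(-)^\sharp$), it preserves pushouts. Hence $UP\cong UA'\amalg_{UA}U\mathrm{th}_nA$. Now $UA\to UA'$ is an isomorphism by entireness, and $UA\to U\mathrm{th}_nA$ is the identity. Pushing out along an isomorphism gives an isomorphism, so $UP\cong UA'$, and the comparison map $P\to\mathrm{th}_nA'$ is the identity on underlying simplicial sets.

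\emph{Markings.} Under this identification, a simplex of $P$ is marked exactly when it is marked in $A'$ or is the image of a simplex marked in $\mathrm{th}_nA$. The simplices marked in $\mathrm{th}_nA$ are those marked in $A$ together with all simplices of dimension $>n$. Since $f$ preserves markings, the marked simplices of $P$ are those marked in $A'$ together with all simplices of dimension $>n$. This is by definition the marking of $\mathrm{th}_nA'$, so the comparison map is an isomorphism.

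There is no real obstacle here. The only point needing care is the description of markings in a pushout of marked simplicial sets, namely that the marking is the image of the markings of the pieces, with no further closure needed beyond containing the degenerate simplices (which are already marked). I would state this explicitly, since it underlies the identification of the marking of $P$.
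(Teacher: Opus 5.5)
Your proof is correct, but it takes a different route from the paper's. The paper never constructs the pushout. It checks the universal property representably: since $UA\cong UA'$, all the relevant hom-sets sit inside the set of simplicial maps $UA\to UX$, and for every marked simplicial set $X$ one has $m\sset(\mathrm{th}_nA',X)=m\sset(A',X)\cap m\sset(\mathrm{th}_nA,X)$. So the square of hom-sets is a pullback, and Yoneda gives the pushout.

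Both arguments rest on the same observation: the marking of $\mathrm{th}_nA'$ is the union of the markings of $A'$ and of $\mathrm{th}_nA$. You express this covariantly, by building $A'\amalg_A\mathrm{th}_nA$. The paper expresses it contravariantly, through maps out.

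The paper's version is shorter and avoids describing markings in colimits in $m\sset$, which you rightly flagged as the one point needing care. Yours is more concrete: it exhibits the comparison map as the identity on underlying simplicial sets. It also shows where entireness is actually used. The underlying pushout is $UA'$ for any map $A\to A'$, since one leg is the identity. Entireness is needed only for the markings, to ensure that every simplex of $A'$ of dimension $>n$ is the image of one from $A$.
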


\begin{proof}
Given any $X$ marked simplicial set, we have that
\[
m\sset(\mathrm{th}_n A',X)=m\sset(A',X)\cap m\sset(\mathrm{th}_n A,X)
\]
It follows that the induced square of sets
\[
\begin{tikzcd}
m\sset(A,X)&m\sset(A',X)\ar[l]\\
m\sset(\mathrm{th}_nA,X)\ar[u]&m\sset(\mathrm{th}_nA',X)\ar[u]
\ar[l]
\end{tikzcd}
\]
is a pullback, and the claim follows.
\end{proof}

We discuss the two-point suspension construction in the context of marked simplicial sets.

\begin{notn}
\label{SuspensionMarked}
Let $X$ be a marked simplicial set.
\begin{itemize}[leftmargin=*]
    \item Given any marked simplicial set $Y$, the marked simplicial set $X\star Y$ is the \emph{join} of $X$ and $Y$ from \cite[\textsection3]{VerityComplicialI} and \cite[\textsection2]{ORfp}.
    This is given by endowing the join of the underlying simplicial sets
    of $X$ and $Y$ with the marking of all simplices $\sigma\star\tau$ for which $\sigma$ is marked in $X$ or $\tau$ is marked in $Y$.
    The join construction defines a functor $\star\colon m\sset\times m\sset\to m\sset$.
    \item The marked simplicial set $\Sigma X$ is the \emph{two-point suspension} of $X$ from
    \cite[\textsection2]{ORfp}, which is obtained as $\Sigma X\coloneqq (X\star\Delta[0])/X$.
    The suspension construction defines a cocontinuous functor $\Sigma\colon m\sset\to^{\partial\Delta[1]/}m\sset$.
    \item Given two vertices $a$ and $b$ of $X$, encoded as a map $(a,b)\colon\partial\Delta[1]\to X$, the marked simplicial set $X(a,b)$ is the \emph{hom} of $X$ at $(a,b)$ from  \cite[\textsection2]{ORfp}. The set of (marked) $k$-simplices of $X(a,b)$ is
given by
\[
(X(a,b))_{[k]_{(t)}}=\left({^{\partial\Delta[1]/}m\sset}\right)(\Sigma\Delta[k]_{(t)},X),
\]
The hom construction defines a functor $(-)(-,-)\colon{}^{\partial\Delta[1]/}m\sset\to m\sset$.
    \end{itemize}
\end{notn}

\begin{rmk}
\label{SuspensionAdjunctionMarked}
We know from \cite[\textsection2]{ORfp} that there is an adjunction
\[\Sigma\colon m\sset\rightleftarrows {}^{\partial\Delta[1]}/m\sset\noloc(-)(-,-),\]
where ${}^{\partial\Delta[1]}/m\sset$ 
denotes the category of bipointed marked simplicial sets.
\end{rmk}

We now introduce iterated hom-suspension:

\begin{prop}
\label{IteratedSuspensionAdjunctionMarked}
Let $\ell\geq0$. The adjunction from \cref{IteratedSuspensionAdjunctionMarked} induces an adjunction
\[\Sigma^{\ell}\colon m\sset\rightleftarrows{}^{\Sigma^{\ell}\varnothing/}m\sset\noloc(-)^{(\ell)}(-).\]
\end{prop}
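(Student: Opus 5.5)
This is a direct instance of \cref{IteratedAdjunction}, exactly as in the proof of \cref{IteratedSuspensionAdjunctionCat}. I would apply that lemma with $\cC=m\sset$, with $c_0=\varnothing$ the empty marked simplicial set (which is initial), with $F=\Sigma$ the two-point suspension from \cref{SuspensionMarked}, and with $G=(-)(-,-)$ the hom construction. The self-reference in the statement should be read as a reference to the suspension-hom adjunction of \cref{SuspensionAdjunctionMarked}; that adjunction is what supplies the hypothesis of the lemma.

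The one point to verify is that the given adjunction really has the shape required by \cref{IteratedAdjunction}. The lemma needs an adjunction between $m\sset\simeq{}^{\varnothing/}m\sset$ and ${}^{F(\varnothing)/}m\sset$. So I would check that $\Sigma\varnothing\cong\partial\Delta[1]$. By definition, $\Sigma\varnothing=(\varnothing\star\Delta[0])/\varnothing$. Since $\varnothing\star\Delta[0]\cong\Delta[0]$, this is the quotient of $\Delta[0]$ by the empty subobject. Under the convention of \cite[\textsection2]{ORfp}, where $\Sigma X$ is the pushout of $X\star\Delta[0]\leftarrow X\star\varnothing\sqcup\varnothing\star\Delta[0]$ collapsing $X$ to a point, the quotient by the empty subobject adds a disjoint point. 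Hence $\Sigma\varnothing\cong\Delta[0]\sqcup\Delta[0]=\partial\Delta[1]$, with the bipointing by $\bot$ and $\top$ matching the one used in \cref{SuspensionAdjunctionMarked}. With this identification, the adjunction of \cref{SuspensionAdjunctionMarked} is precisely $\Sigma\colon{}^{\varnothing/}m\sset\rightleftarrows{}^{\Sigma\varnothing/}m\sset\noloc(-)(-,-)$.

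Given this, \cref{IteratedAdjunction} yields for every $\ell\geq0$ the composite adjunction $\Sigma^\ell\colon m\sset\rightleftarrows{}^{\Sigma^\ell\varnothing/}m\sset\noloc(-)^{(\ell)}(-)$. Its right adjoint is the $\ell$-fold iterated hom, obtained at each stage by slicing the basic adjunction under $\Sigma^{j}\varnothing$ and identifying $\Sigma(\Sigma^j\varnothing)=\Sigma^{j+1}\varnothing$, as in the inductive step of that lemma.

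The main (minor) obstacle is the identification $\Sigma\varnothing\cong\partial\Delta[1]$ together with compatibility of the basepoints. In particular, one must make sure that the quotient $(X\star\Delta[0])/X$ is interpreted so that collapsing $X=\varnothing$ still produces the bottom point; otherwise $\Sigma\varnothing$ would be a single point. I would handle this by appealing directly to the convention in \cite[\textsection2]{ORfp}, under which $\Sigma$ lands in bipointed objects.
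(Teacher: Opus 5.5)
Your proposal is correct and follows the paper's proof: apply \cref{IteratedAdjunction} with $\cC=m\sset$, $c_0=\varnothing$, $F=\Sigma$ and $G$ the hom construction, reading the self-reference in the statement as \cref{SuspensionAdjunctionMarked}. Your explicit check that $\Sigma\varnothing\cong\partial\Delta[1]$, computed as the pushout $\Delta[0]\leftarrow\varnothing\to\varnothing\star\Delta[0]$, supplies an identification the paper leaves implicit in the marked setting, and it is right.
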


\begin{proof}
This is an instance of \cref{IteratedAdjunction}, applied to
$\cC=m\sset$, $c_0=\varnothing$, $F=\Sigma$, and $G=(-)^{(-)}(-)$,
using the suspension-hom adjunction of \cref{SuspensionAdjunctionMarked}.
\end{proof}

\begin{rmk}
\label{GlobularBoundary}
Let $\ell\geq0$. Given marked simplicial sets $X,P$ and $\gamma\colon\Sigma^\ell X \to P$ a map of marked simplicial sets.
There is a canonical map of marked simplicial sets
\[\Sigma^\ell !\colon \Sigma^\ell\varnothing\to \Sigma^\ell X\]
and hence also a canonical map of marked simplicial sets
\[\glboundary\gamma\colon
\Sigma^\ell \varnothing \xrightarrow{\Sigma^\ell !} \Sigma^\ell X \xrightarrow{\gamma} P.\]
This makes $\gamma$ into a morphism $\Sigma^\ell X \to P$ in ${}^{\Sigma^{\ell}\varnothing/}m\sset$.
The transpose of this morphism with respect to the adjunction from \cref{IteratedSuspensionAdjunctionMarked} is then a morphism of marked simplicial sets
\[X \to P^{(\ell)}(\glboundary\gamma).\]
\end{rmk}

We conclude with a brief analysis of the simplices of iterated suspensions, which will be used later in the paper.

First, there is a comparison map between the iterated suspension and the join with a standard simplex:

\begin{rmk}
\label{ProgressiveSuspensionsGeneral}
For any marked simplicial set $X$ and $0\leq p\leq\ell$, there is a canonical and natural quotient map
\[
\Sigma^pX\star\Delta[\ell-p]
=
\Sigma^pX\star\Delta[0]\star\Delta[\ell-p-1]
\to
\Sigma(\Sigma^pX)\star\Delta[\ell-p-1]
=
\Sigma^{p+1}X\star\Delta[\ell-p-1].
\]
In total, there are canonical and natural quotient maps
\[
X\star\Delta[\ell]
\to
\Sigma X\star\Delta[\ell-1]
\to\dots\quad\quad\quad\quad\quad\quad\quad\quad\quad\quad\quad\quad\quad\quad\quad\quad\quad\quad\quad
\]
\[
\dots\to
\Sigma^pX\star\Delta[\ell-p]
\to
\Sigma^{p+1}X\star\Delta[\ell-p-1]
\to\dots
\]
\[
\quad\quad\quad\quad\quad\quad\quad\quad\quad\quad\quad\quad\quad\quad\quad\quad\quad
\dots\to
\Sigma^\ell X\star\Delta[0]
\to
\Sigma^{\ell+1}X.
\]
\end{rmk}

This comparison map can be used to access a description of the simplices of iterated suspensions:

\begin{lem}
\label{IteratedSuspensionMarked}
Given a marked simplicial set $X$, referring to \cref{ProgressiveSuspensionsGeneral}, every non-degenerate simplex of $\Sigma^\ell X$ is one of the following:
\begin{itemize}[leftmargin=1cm]
    \item[(A)] For $0\leq k\leq\ell-1$, the $k$-simplices
    \[\Sigma^{k}\bot\colon\Delta[k]\to\Sigma^k\Delta[0]\xrightarrow{\Sigma^k\bot}\Sigma^k\Sigma^{\ell-k}X
    \ \text{ and }\ 
    \Sigma^{k}\top\colon\Delta[k]\to\Sigma^k\Delta[0]\xrightarrow{\Sigma^k\top}\Sigma^k\Sigma^{\ell-k}X,\]
    which are not marked in $\Sigma^\ell X$.
    \item[(B)] For $\tau$ a non-degenerate $d$-simplex of $X$, the $(d+\ell)$-simplex
    \[\Sigma^\ell\tau\colon\Delta[d+\ell]\to\Sigma^\ell\Delta[d]
    \xrightarrow{\Sigma^\ell\tau}\Sigma^\ell X,\]
    which is marked $\Sigma^\ell X$ precisely if $\tau$ is marked in $X$.
\end{itemize}
\end{lem}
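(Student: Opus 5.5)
We argue by induction on $\ell\geq0$, peeling off one suspension at a time. For $\ell=0$ there is nothing to prove: the non-degenerate simplices of $\Sigma^0X=X$ are the $\Sigma^0\tau=\tau$ of type (B), with the same marking, and type (A) is empty. For the inductive step it suffices to understand a single suspension. That is, we first establish the case $\ell=1$ for an arbitrary marked simplicial set $Y$, and then apply it to $Y=\Sigma^{\ell}X$.

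\textbf{Single suspension.} By \cref{SuspensionMarked}, $\Sigma Y=(Y\star\Delta[0])/Y$. We compute directly from the join. The non-degenerate simplices of $Y\star\Delta[0]$ are of three kinds: those of $Y$; the cone point; and the cones $\tau\star\Delta[0]$ on non-degenerate $d$-simplices $\tau$ of $Y$, which have dimension $d+1$.

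Collapsing $Y$ to a point has the following effects.
\begin{itemize}
\item All simplices of $Y$ become degenerate on the vertex $\bot$, except the vertex itself.
\item The cone point becomes $\top$.
\item Each $\tau\star\Delta[0]$ stays non-degenerate in the quotient. Its last vertex is $\top$, while all its other vertices go to $\bot$. Since $d+1\geq1$ and the quotient only identifies simplices lying in $Y$, the image is not in the image of any degeneracy unless $\tau$ was degenerate.
\end{itemize}
The cone $\tau\star\Delta[0]$ is exactly the simplex $\Sigma\tau\colon\Delta[d+1]\to\Sigma\Delta[d]\to\Sigma Y$.

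For the marking we use the join marking: $\tau\star\Delta[0]$ is marked iff $\tau$ is marked, because $\Delta[0]$ has no marked simplices in positive dimension. We also need that the quotient, being a pushout in $m\sset$, creates no new markings on non-degenerate simplices. This holds because a marked simplex of the pushout is the image of a marked simplex of $Y\star\Delta[0]$ or of a degenerate one.

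So the non-degenerate simplices of $\Sigma Y$ are the two unmarked vertices $\bot,\top$ together with the simplices $\Sigma\tau$ for $\tau$ non-degenerate in $Y$, marked iff $\tau$ is.

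\textbf{Inductive step.} Write $\Sigma^{\ell+1}X=\Sigma(\Sigma^\ell X)$. Its non-degenerate simplices are the two vertices $\bot,\top$, which give (A) with $k=0$, and the simplices $\Sigma\sigma$ for $\sigma$ non-degenerate in $\Sigma^\ell X$. By the induction hypothesis each such $\sigma$ is of one of two forms.
\begin{itemize}
\item If $\sigma=\Sigma^k\bot$ or $\Sigma^k\top$ with $k\leq\ell-1$, then $\Sigma\sigma$ is $\Sigma^{k+1}\bot$ or $\Sigma^{k+1}\top$ with $k+1\leq\ell$. It is unmarked, since $\sigma$ is.
\item If $\sigma=\Sigma^\ell\tau$, then $\Sigma\sigma=\Sigma^{\ell+1}\tau$, and it is marked iff $\tau$ is.
\end{itemize}
Together these give exactly the list (A), (B) for $\ell+1$.

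\textbf{Identifications to check.} We must verify that these simplices agree with the explicit composites in the statement. The composites there are built from the maps $\Delta[d+\ell]\to\Sigma^\ell\Delta[d]$ and $\Delta[k]\to\Sigma^k\Delta[0]$, obtained from \cref{ProgressiveSuspensionsGeneral} with $X=\Delta[d]$, respectively $\Delta[0]$, and $p=0$, as
\[
\Delta[d+\ell]\cong\Delta[d]\star\Delta[\ell-1]\to\Sigma^\ell\Delta[d].
\]
Naturality of the quotient maps in \cref{ProgressiveSuspensionsGeneral} shows that $\Sigma$ applied to the $\ell$-step composite agrees with the $(\ell+1)$-step composite, up to the identification $\Delta[d+\ell]\star\Delta[0]\cong\Delta[d+\ell+1]$. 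This is the one place where we must be careful with indexing.

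\textbf{Main obstacle.} The real content is the non-degeneracy and marking claim for the single suspension, in particular that the quotient $(Y\star\Delta[0])/Y$ does not degenerate or newly mark cones of non-degenerate simplices. To settle it, we plan to use the explicit description of the pushout in $m\sset$ levelwise on simplices. Concretely: a simplex of the quotient not in $Y$ has a unique preimage in $Y\star\Delta[0]$, because the collapse is injective off $Y$, and its marking is inherited from that preimage.
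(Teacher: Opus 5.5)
Your proof is correct and has the same backbone as the paper's: induction on $\ell$, where the only real computation is the single suspension $(Y\star\Delta[0])/Y$. The difference is that you peel off the suspensions in the opposite order.

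The paper writes $\Sigma^{\ell+1}X=\Sigma^{\ell}(\Sigma X)$ and applies the induction hypothesis to $Y=\Sigma X$. The old simplices of type (A) are then kept verbatim, and the two vertices $\bot,\top$ of $\Sigma X$ produce the new ones $\Sigma^{\ell}\bot,\Sigma^{\ell}\top$ with $k=\ell$. You write $\Sigma^{\ell+1}X=\Sigma(\Sigma^{\ell}X)$ instead, so the new type-(A) simplices are the vertices ($k=0$) and all old ones shift by $k\mapsto k+1$.

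Your ordering has a cost, and it is exactly the identification you flag. Your simplices come out as $\Sigma(\Sigma^{\ell}\tau)$, that is, in terms of the inductive filtration of \cref{ProgressiveSuspensionsGeneral2}. Matching them with the composites of \cref{ProgressiveSuspensionsGeneral} used in the statement needs the comparison of the two filtrations, which the paper records in \cref{ProgressiveSuspensionsGeneral2} via naturality of $q_X$. The paper's ordering produces $\Sigma^{\ell}(\Sigma\tau')$, which matches \cref{ProgressiveSuspensionsGeneral} directly. This uses only naturality in $X$ of the composite $X\star\Delta[\ell-1]\to\Sigma^{\ell}X$, because the first step of the $(\ell+1)$-fold composite is $q_X\star\Delta[\ell-1]$.

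In return, your single-suspension analysis is more explicit than the paper's, which simply asserts the list. You argue via the unique preimage off $Y$ in the pushout, which gives both the non-degeneracy of $\tau\star\Delta[0]$ and the fact that its marking is inherited from $Y\star\Delta[0]$.
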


\begin{proof}
We see this by induction on $\ell\geq0$. When $\ell=0$, there is nothing to show. Now assume the statement true for $\ell$, and we show it for $\ell+1$. Using the induction hypothesis, we can first describe the non-degenerate simplices of $\Sigma^\ell(\Sigma X)$ in terms of those of $\Sigma X$, and then describe the non-degenerate simplices of $\Sigma X$ in terms of those of $X$.

Apply the induction hypothesis to $Y\coloneq \Sigma X$ to see that the non-degenerate simplices of $\Sigma^\ell Y$ are the following:
\begin{itemize}[leftmargin=1.5cm]
\item[($A_\ell(Y)$)] For each $0\leq k\leq \ell-1$ the $k$-simplices
\[\Sigma^{k}\bot\colon\Delta[k]\to\Sigma^k\Delta[0]
\xrightarrow{\Sigma^k\bot}\Sigma^k\Sigma^{\ell-k}Y
\ \text{ and }\ 
\Sigma^{k}\top\colon\Delta[k]\to\Sigma^k\Delta[0]
\xrightarrow{\Sigma^k\top}\Sigma^k\Sigma^{\ell-k}Y,\]
\item[($B_\ell(Y)$)]
\label{it:Y-tau-simplex}
For each non-degenerate $d$-simplex $\tau$ of $Y$,
the $(d+\ell)$-simplex
\[\Sigma^\ell\tau\colon\Delta[d+\ell]\to\Sigma^\ell\Delta[d]
\xrightarrow{\Sigma^\ell\tau}\Sigma^\ell Y,\]
which is marked $\Sigma^\ell Y$ precisely if $\tau$ is marked in $Y$.
\end{itemize}
So it remains to understand the non-degenerate $d$-simplices $\tau$ of $Y=\Sigma X$.
By the construction $Y = (X\star \Delta[0]) / X$
each such simplex $\tau$ is represented by one of the following:
\begin{itemize}
    \item $\varnothing\star \tau''\mapsto \tau$, which is non-degenerate precisely if $\dim\tau''=0$; in this case $\tau=\top$.
    \item $\tau'\star\varnothing \mapsto \tau$, which is non-degenerate precisely if $\dim\tau'=0$; in this case $\tau=\bot$.
    \item $\tau'\star\tau''\mapsto \tau$ which is non-degenerate precisely if $\dim\tau''=0$ and $\tau'$ is a non-degenerate $(d-1)$-simplex of $X$; in this case $\tau=\Sigma \tau'$.
      Moreover, $\tau$ is marked if and only if $\tau'$ is.
\end{itemize}

Thus the simplices of type ($B_\ell(Y)$)
can be subdivided into:
\begin{itemize}[leftmargin=1.5cm]
\item[($B_\ell(Y)'$)]
The two $(0+\ell)$-simplices
\[\Sigma^{\ell}\bot\colon\Delta[\ell]\to\Sigma^{\ell}\Delta[0]
\xrightarrow{\Sigma^{\ell}\bot}\Sigma^{\ell}Y
\quad\text{and}\quad
\Sigma^{\ell}\top\colon\Delta[\ell]\to\Sigma^{\ell}\Delta[0]
\xrightarrow{\Sigma^{\ell}\top}\Sigma^{\ell}Y.\]
corresponding to the two $0$-simplices $\bot$ and $\top$ of $Y$;
they are not marked because $\bot$ and $\top$ weren't.
\item[($B_\ell(Y)''$)]
For each non-degenerate $(d-1)$-simplex $\tau'$ of $X$, the $(d+\ell)$-simplex
\[\Sigma^\ell(\Sigma \tau')\colon 
\Delta[d+\ell]\to\Sigma^{\ell}\Delta[d]\to \Sigma^{\ell}\Sigma \Delta[d-1]\xrightarrow{\Sigma^{\ell}\Sigma\tau'}=\Sigma^{\ell}\Sigma X\]
which is marked if and only if $\Sigma\tau'$ is marked if and only if $\tau'$ is marked.
\end{itemize}
This gives the desired final list; specifically with
\[
A_{\ell+1}(X)=A_\ell(Y) \cup B_\ell(Y)'
\quad\text{and}\quad
B_{\ell+1}(X)=B_\ell(Y)''.
\]
These are exactly the simplices listed in the statement for $\ell+1$, and the marking condition is the same one. Hence the claim follows.
\end{proof}


In particular, we record which non-degenerate simplices remain non-degenerate
after applying the quotient map from
\cref{ProgressiveSuspensionsGeneral}:

\begin{rmk}
\label{SurvivingNonDegenerate}
Let $d\geq 1$ and $p\geq 1$.
Given a marked simplicial set $X$, a non-degenerate $d$-simplex
$\rho=\sigma\star\tau$ of $X\star\Delta[p-1]$
remains non-degenerate in $\Sigma^pX$
under the quotient map $X\star\Delta[p-1]\to \Sigma^p X$
of \cref{ProgressiveSuspensionsGeneral}
if and only if one of the following holds:
\begin{itemize}
    \item
    $\dim(\sigma)\geq 0$ and $\tau=\{0,\dots,p-1\}$, or
    \item
    $\dim(\sigma)\leq 0$ and $\tau$ is a terminal segment of $\{0,\dots,p-1\}$, or
    \item
    $\sigma=\varnothing$
    and $\tau=\{i\}\star \{j,\dots, p-1\}$ for some $j>i+1$.
\end{itemize}
In particular, if $\dim \rho \geq p$, then $\tau=\{0,\dots, p-1\}$.
The same statement is also true for $p=0$, vacuously.
\end{rmk}

We will later make use of a different filtration for the map $X \to 
\Sigma^{\ell+1}X$:

\begin{rmk}
\label{ProgressiveSuspensionsGeneral2}
The composite $\alpha_\ell\colon X \star \Delta[\ell] \to \Sigma^{\ell+1} X$ from \cref{ProgressiveSuspensionsGeneral} can also be defined inductively by the formula
\[
\alpha_\ell\colon
X \star \Delta[\ell]=(X \star \Delta[\ell-1]) \star \Delta[0]\to
\Sigma (X \star \Delta [\ell -1])
\xrightarrow{\Sigma (\alpha_{\ell-1})}\Sigma (\Sigma^{\ell} X)=\Sigma^{\ell+1}X
\]
with $\alpha_{-1}=\id_X\colon X\to X$.
This yields a filtration
\[
X \star \Delta[\ell]
\to
\Sigma( X \star \Delta[\ell -1])
\to\dots\to
\Sigma^{p}(X\star \Delta[\ell-p])
\to\dots\to
\Sigma^{\ell}(X\star\Delta[0])
\to
\Sigma^{\ell+1}X.
\]

The fact that both filtrations give rise to the same resulting composite
$X\star\Delta[\ell]\to \Sigma^{\ell+1}$ follows from the naturality in $X$ of the quotient map
$q_X\colon X\star \Delta[0] \to \Sigma X$. For example, in the case $\ell = 1$ we have the following naturality square:
\[
\begin{tikzcd}
{X \star [0]\star [0]}\ar[r,"{q_{X\star[0]}}"]\ar[d, "{q_X \star [0]}"swap]&{\Sigma(X\star [0])}\arrow[d,"{\Sigma(q_X)}"]\\
{\Sigma X \star [0]}\arrow[r,"{q_{\Sigma X}}"]&\Sigma \Sigma X
\end{tikzcd}
\]
and the case of general $\ell\geq1$ is analogous.
\end{rmk}

\subsection{Roberts--Street nerve preserves homs}

Marked simplicial sets also serve as an ambient for $\omega$-categories, through various constructions:

\begin{notn}
Let $\cD$ be an $\omega$-category.
\begin{itemize}[leftmargin=*]
    \item The simplicial set $N\cD$ is the \emph{nerve} of $\cD$ from \cite{StreetOrientedSimplexes}.
    Explicitly, the set of $k$-simplices is given by
    \[(N\cD)_k=\omega\cat(\cO[k],\cD)\]
    The nerve construction defines a continuous functor $N\colon \omega\cat\to\sset$.
    \item The marked simplicial set $N^{\mathrm{RS}}\cC$ is the \emph{Roberts--Street nerve} of $\cC$ from \cite[Observation 246]{VerityComplicialAMS}. This is given by endowing $N\cC$ with the marking of all simplices whose corresponding top dimensional cell is an identity. Explicitly, the set of (marked) $k$-simplices is given by
    \[(N^{\mathrm{RS}}\cD)_{k}=\omega\cat(\cO[k],\cD)\quad\text{ and }\quad t(N^{\mathrm{RS}}\cD)_{k}=\omega\cat(\cO[k]\amalg_{\cC[k]}\cC[k-1],\cD),\]
    following the convention that $\cC[-1]=\varnothing$.
    The Roberts--Street nerve construction defines a continuous functor $N^{\mathrm{RS}}\colon \omega\cat\to m\sset$.
    \item The marked simplicial set $N^\natural\cC$ is the \emph{natural nerve} of $\cC$ from \cite[Définition\ 5.10]{Loubaton1}. This is given by endowing $N\cC$ with the marking of all simplices whose corresponding top dimensional cell is coinductively invertible.
    Explicitly, if $\cE$ denotes the $\omega$-category from \cref{E:exists}, the set of marked $k$-simplices is given by the simplices represented by a map $\cO[k]\to\cD$ which extend to a map $\cO[k]\amalg_{\Sigma^{k-1}\cC[1]}\Sigma^{k-1}\cE\to\cD$.
The natural nerve construction defines a functor $N^{\natural}\colon \omega\cat\to m\sset$.
\end{itemize}
\end{notn}

\begin{rmk}
\label{NerveAdjunction}
As considered in \cite[\textsection5]{StreetOrientedSimplexes},
there is an adjunction
\[c\colon \sset\rightleftarrows\omega\cat\noloc N.\]
Here, the left adjoint $c\colon \sset\to\omega\cat$ acts
on
representable simplicial sets as:
\[c\Delta[k]\cong\cO[k]\text{ for $k\geq0$}.\]
\end{rmk}

\begin{rmk}
\label{NerveAdjunctionMarked}
We know from \cite[Observation 246]{VerityComplicialAMS} that there is an adjunction
\[
c^{\mathrm{RS}}\colon m\sset\rightleftarrows\omega\cat\noloc N^{\mathrm{RS}}.
\]
Here, the left adjoint $c^{\mathrm{RS}}\colon m\sset\to\omega\cat$ acts on (marked) representable simplicial sets as:
\[c^{\mathrm{RS}}\Delta[k]\cong\cO[k]\text{ for $k\geq0$}
\quad{\text{ and }}\quad
c^{\mathrm{RS}}\Delta[k]_t\cong\cO[k]\amalg_{\cC[k]}\cC[k-1]\text{ for $k>0$}.\]
\end{rmk}

The goal of this subsection is to prove  
\cref{NvsHom}, which establishes the compatibility between the Roberts--Street nerve $N^{\mathrm{RS}}$ and the hom functor(s). This fact will be employed in \cref{thNerveContractibleGaunt}.

\begin{rmk}
\label{IsoOnReps}
Denote by $t\Delta$ the full subcategory of $m\sset$ spanned by all marked simplicial sets of the form $\Delta[\ell]$ (standard $\ell$-simplex with minimal marking) for $\ell\geq0$ and $\Delta[\ell]_t$ (standard $\ell$-simplex with minimal marking that includes the top simplex) for $\ell>0$.
As discussed in \cite[Observation 108]{VerityComplicialAMS} and \cite[\textsection 1]{or},
$m\sset$ is a reflective subcategory of $\set^{t\Delta^{\mathrm{op}}}$ containing all representables.
Then, for all cocomplete categories $\cC$,
if $\mathrm{Fun}^{\colim}(m\sset,\cC)$ resp.~($ \mathrm{Fun}^{\colim}(\set^{t\Delta^{\mathrm{op}}},\cC)$) denotes the category of colimit-preserving functors from $m\sset$ (resp.~$\set^{t\Delta^{\mathrm{op}}}$) to $\cC$, the functor
\[\mathrm{Refl}^*\colon\mathrm{Fun}^{\colim}(msSet,\cC)\to \mathrm{Fun}^{\colim}(\set^{t\Delta^{\mathrm{op}}},D)\cong \mathrm{Fun}(t\Delta,\cC)\]
induced by the reflector $\mathrm{Refl}\colon\set^{t\Delta^{\mathrm{op}}}\to m\sset$ is fully faithful.
\end{rmk}

\begin{lem}
\label{CvsCone}
For all
$\ell\geq0$ (resp.~$\ell>0$)
there are natural isomorphisms (with respect to morphisms in $t\Delta$) in $\omega\cat$
\[
c^{\mathrm{RS}}(\Delta[\ell]_{(t)}\star\Delta[0])\cong c^{\mathrm{RS}}\Delta[\ell]_{(t)}\star [0].
\]
\end{lem}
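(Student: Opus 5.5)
The plan is to treat the two cases separately: first the unmarked one, where both sides are orientals, and then deduce the marked one by rewriting both sides as explicit pushouts. Naturality will be checked on generating morphisms of $t\Delta$.

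\textbf{Unmarked case.} By definition of the join of marked simplicial sets, $\Delta[\ell]\star\Delta[0]\cong\Delta[\ell+1]$ with minimal marking. The left-hand side is therefore $c^{\mathrm{RS}}\Delta[\ell+1]\cong\cO[\ell+1]$ by \cref{NerveAdjunctionMarked}. The right-hand side is $\cO[\ell]\star[0]=\Cone\cO[\ell]\cong\cO[\ell+1]$ by \cref{SuspensionCat}.

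For naturality with respect to maps $\Delta[\ell]\to\Delta[\ell']$, I will use that the Ara--Maltsiniotis join is compatible with the join of orientals. Concretely, the isomorphism $\cO[i]\star\cO[j]\cong\cO[i+j+1]$ is natural in simplicial operators; this comes from their Steiner-complex description, in which both sides have the same basis indexed by faces. I will take $j=0$.

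\textbf{Marked case ($\ell>0$).} By the description of the marking on joins in \cref{SuspensionMarked}, $\Delta[\ell]_t\star\Delta[0]$ is $\Delta[\ell+1]$ with exactly two nondegenerate simplices marked. These are the face $\{0,\dots,\ell\}=\iota\star\varnothing$ and the top simplex $\iota\star\{0\}$, where $\iota$ is the top simplex of $\Delta[\ell]$. Hence it is the iterated pushout
\[
\Delta[\ell+1]\amalg_{\Delta[\ell]}\Delta[\ell]_t\amalg_{\Delta[\ell+1]}\Delta[\ell+1]_t .
\]
Since $c^{\mathrm{RS}}$ is a left adjoint, applying it gives $\cO[\ell+1]$ with two cells collapsed. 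The $\ell$-cell of the face $d_{\ell+1}$ is collapsed along $\cC[\ell]\to\cC[\ell-1]$, and the top $(\ell+1)$-cell along $\cC[\ell+1]\to\cC[\ell]$.

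On the other side, $c^{\mathrm{RS}}\Delta[\ell]_t\cong\cO[\ell]\amalg_{\cC[\ell]}\cC[\ell-1]$. Since $\Cone$ is cocontinuous as a functor to ${}^{[0]/}\omega\cat$, and pushouts there are computed as in $\omega\cat$, we get
\[
\Cone\big(\cO[\ell]\amalg_{\cC[\ell]}\cC[\ell-1]\big)\cong\Cone\cO[\ell]\amalg_{\Cone\cC[\ell]}\Cone\cC[\ell-1].
\]
I then apply \cref{ConeCell3} with $k=\ell-1$. It exhibits $\Cone\cC[\ell]\to\Cone\cC[\ell-1]$ (induced by the retraction) as two successive pushouts: first collapsing an $(\ell+1)$-cell $\cC[\ell+1]\to\cC[\ell]$, then collapsing an $\ell$-cell $\cC[\ell]\to\cC[\ell-1]$. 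By pushout pasting, the right-hand side is $\cO[\ell+1]$ with the images of these two cells collapsed.

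\textbf{Main obstacle.} The key step is identifying which cells of $\cO[\ell+1]$ these are. The $\ell$-cell is the image of $\cC[\ell]\hookrightarrow\Cone\cC[\ell]$, which is the base of the cone. The $(\ell+1)$-cell is the top cell of $\Cone\cC[\ell]$. I must show that under $\Cone(\cC[\ell]\to\cO[\ell])\colon\Cone\cC[\ell]\to\cO[\ell+1]$ they go respectively to the face $\{0,\dots,\ell\}$ and to the top cell of $\cO[\ell+1]$.

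The base cell is handled by naturality of the inclusion $D\to D\star[0]$ applied to $\cC[\ell]\to\cO[\ell]$. For the top cell, I will use the explicit basis from the proof of \cref{ConeCell1}, i.e.\ \cite[\textsection9.1]{AraMaltsiniotisJoin}. The unique $(\ell+1)$-dimensional basis element of $\Cone\cC[\ell]$ is $x\star[0]$, where $x$ is the top generator; in $\cO[\ell]\star[0]\cong\cO[\ell+1]$ this is the atom $\{0,\dots,\ell+1\}$. Once both sides are identified with the same collapse of $\cO[\ell+1]$, the isomorphism follows.

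\textbf{Naturality in the marked case.} I will check compatibility with the maps $\Delta[\ell]\to\Delta[\ell]_t$ and with faces and degeneracies landing in marked simplices. Both sides are quotients of $\cO[\ell+1]$, and the unmarked isomorphism is already natural, so compatibility follows from the universal property of the quotients.
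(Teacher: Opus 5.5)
Your proposal is correct and follows the paper's proof. Both sides are presented as the same quotient of $\cO[\ell+1]$, collapsing the face $\{0,\dots,\ell\}$ and the top cell, via cocontinuity of $\Cone$, \cref{ConeCell3} with $k=\ell-1$, and pushout pasting. Your explicit identification of the two collapsed cells through the Ara--Maltsiniotis basis, and your naturality argument (the quotient maps out of $\cO[\ell+1]$ are epimorphisms), fill in what the paper states only as ``the claim follows'' and ``one can check''.
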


\begin{proof}
The isomorphism
\[ c^{\mathrm{RS}}(\Delta[\ell]\star\Delta[0])\cong c^{\mathrm{RS}}\Delta[\ell]\star [0]\]
for $\ell\geq0$ is by construction, and we now treat the isomorphism
\[
c^{\mathrm{RS}}(\Delta[\ell]_t\star\Delta[0])\cong c^{\mathrm{RS}}\Delta[\ell]_t\star [0]\]
for $\ell>0$. For this, we see using the definition of join from \cite[\textsection3]{VerityComplicialI} that the marked simplicial set 
$\Delta[\ell]_t\star \Delta[0]$ can be described as a pushout in $m\sset$ of the form
\[
\begin{tikzcd}
    \coprod\limits_{a=-1}^0\Delta[\ell] \star \Delta[a]\arrow[d]\arrow[r]& \Delta[\ell] \star \Delta[0]\arrow[d] \\
    \coprod\limits_{a=-1}^0\Delta[\ell+1+a]_t\arrow[r]& \Delta[\ell]_t \star \Delta [0]
\end{tikzcd}
\]
Thus, by applying the cocontinuous functor $c^{\mathrm{RS}}$ we obtain a pushout in $\omega\cat$ of the form
\[
\begin{tikzcd}
    \coprod\limits_{a=-1}^0\cO[\ell+1+a]\arrow[d]\arrow[r]&\cO[\ell+1] \arrow[d]\\
    \coprod\limits_{a=-1}^0\cO[\ell+1+a]\amalg_{\cC[\ell+1+a]}\cC[\ell+1+a-1]\arrow[r]&c^{\mathrm{RS}}(\Delta[\ell]_t\star \Delta[0])
\end{tikzcd}
\]
By pushout pasting,
we obtain that $c^{\mathrm{RS}}( \Delta[\ell]_t\star \Delta[0])$ 
can be described as a pushout in $\omega\cat$ of the form
\[
\begin{tikzcd}    \coprod\limits_{a=-1}^0\cC[\ell+1+a]\arrow[d]\arrow[r]&\cO[\ell+1] \arrow[d]\\
    \coprod\limits_{a=-1}^0\cC[\ell+1+a-1]\arrow[r]&c^{\mathrm{RS}}(\Delta[\ell]_t\star\Delta[0])
\end{tikzcd}
\]
Furthermore, the $\omega$-category $(c^{\mathrm{RS}}\Delta[\ell]_t)\star [0]$ can be written as
\[
\begin{array}{lll}
(c^{\mathrm{RS}}\Delta[\ell]_t) \star [0] &\cong  (\cO[\ell]\underset{\cC[\ell]}{\amalg}\cC[\ell-1])\star [0]\\
& \cong (\cO[\ell]\star[0]) \underset{\cC[\ell]\star[0]}{\amalg}(\cC[\ell-1]\star[0])\\
& \cong \cO[\ell+1] \underset{\cC[\ell]\star[0]}{\amalg}(\cC[\ell-1]\star[0]).
\end{array}
\]
In other words, we have pushout square in $\omega\cat$:
\[
\begin{tikzcd}
    \cC[\ell]\star[0]\arrow[r]\arrow[d]&\cO[\ell+1]\arrow[d]\\
    \cC[\ell-1]\star[0]\arrow[r]&({c^{\mathrm{RS}}\Delta[\ell]_t)\star [0]}
\end{tikzcd}
\]
By \cref{ConeCell3} 
we also have that
there is a pushout in $\omega\cat$
\[
\begin{tikzcd}
\coprod\limits_{a=-1}^0\cC[\ell+1+a]\arrow[d]\arrow[r]&\cC[\ell] \star [0]\arrow[d]\\
\coprod\limits_{a=-1}^0\cC[\ell+a]\arrow[r]&\cC[\ell-1]\star[0]
\end{tikzcd}
\]
By pushout pasting, we obtain that $(c^{\mathrm{RS}}\Delta[\ell]_t) \star [0]$ can be described as a pushout in $\omega\cat$
\[
\begin{tikzcd}    \coprod\limits_{a=-1}^0\cC[\ell+1+a]\arrow[d]\arrow[r]&\cO[\ell+1] \arrow[d]\\
    \coprod\limits_{a=-1}^0\cC[\ell+1+a-1]\arrow[r]&({c^{\mathrm{RS}}\Delta[\ell]_t)\star [0]}
\end{tikzcd}
\]
Hence, the claim follows.
By construction, the isomorphism is natural with respect to the canonical map $\Delta[\ell]\to\Delta[\ell]_t$. One can check that it also is natural with respect to the maps $\Delta[\ell]_t\to\Delta[\ell-1]$. From this, given the description of $t\Delta$ in terms of generators and relations from \cite[Notation 1.1]{or}, one deduces the naturality with respect to all maps in $t\Delta$.
\end{proof}

\begin{prop}
\label{CvsSigma}
For any marked simplicial set $X$,
there is a natural isomorphism of bipointed $\omega$-categories
\[
  c^{\mathrm{RS}}\Sigma X\cong \Sigma c^{\mathrm{RS}}X.
\]
\end{prop}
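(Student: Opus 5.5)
We reduce the claim to \cref{CvsCone} using the defining formulas for the two suspensions. On the marked side, $\Sigma X=(X\star\Delta[0])/X$ by \cref{SuspensionMarked}. On the $\omega$-categorical side, $\Sigma\cD\cong(\cD\star[0])/\cD$ by \cref{SuspensionCat}. Since $c^{\mathrm{RS}}$ is a left adjoint (\cref{NerveAdjunctionMarked}), it preserves colimits, and in particular the quotient (pushout) defining $\Sigma X$. This gives
\[
c^{\mathrm{RS}}\Sigma X\cong c^{\mathrm{RS}}(X\star\Delta[0])\amalg_{c^{\mathrm{RS}}X}c^{\mathrm{RS}}\Delta[0]^{\amalg}\cong c^{\mathrm{RS}}(X\star\Delta[0])/c^{\mathrm{RS}}X,
\]
using $c^{\mathrm{RS}}\Delta[0]\cong[0]$ and that $c^{\mathrm{RS}}\varnothing=\varnothing$. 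It therefore suffices to build a natural isomorphism $c^{\mathrm{RS}}(X\star\Delta[0])\cong c^{\mathrm{RS}}X\star[0]$ under $c^{\mathrm{RS}}X$ and over the cone point. We then check that it carries the two basepoints (the image of the collapsed $X$ and the cone vertex) to $\bot$ and $\top$.

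To produce this isomorphism we compare two functors $m\sset\to{}^{[0]/}\omega\cat$, namely $X\mapsto c^{\mathrm{RS}}(X\star\Delta[0])$ and $X\mapsto c^{\mathrm{RS}}X\star[0]$. We first argue that both are cocontinuous as functors into the undercategory ${}^{[0]/}\omega\cat$.
\begin{itemize}
\item For the second functor this follows from cocontinuity of $\Cone$ (\cref{SuspensionCat}) composed with $c^{\mathrm{RS}}$.
\item For the first functor we need that $-\star\Delta[0]\colon m\sset\to{}^{\Delta[0]/}m\sset$ is cocontinuous. This is the standard fact that join with a fixed object is a left adjoint into the slice (its right adjoint is the slice construction of \cite{VerityComplicialI}). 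Composing with $c^{\mathrm{RS}}$, which preserves colimits and hence colimits in undercategories, gives the claim.
\end{itemize}

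By \cref{IsoOnReps}, two cocontinuous functors out of $m\sset$ are naturally isomorphic as soon as their restrictions to $t\Delta$ are naturally isomorphic. Applying this with target the cocomplete category ${}^{[0]/}\omega\cat$ reduces us to exactly \cref{CvsCone}. That lemma supplies isomorphisms on $\Delta[\ell]$ and $\Delta[\ell]_t$, natural in $t\Delta$. It remains to check that these are compatible with the cone points, which holds since they are induced on $\cO[\ell+1]$ by the identity. The natural maps $c^{\mathrm{RS}}X\to c^{\mathrm{RS}}(X\star\Delta[0])$ and $c^{\mathrm{RS}}X\to c^{\mathrm{RS}}X\star[0]$ are likewise natural transformations between cocontinuous functors. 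Their compatibility with the isomorphism can therefore also be checked on representables, where it is the evident inclusion of $\cO[\ell]$ as the face opposite the last vertex of $\cO[\ell+1]$.

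Passing to quotients by $c^{\mathrm{RS}}X$ then yields $c^{\mathrm{RS}}\Sigma X\cong\Sigma c^{\mathrm{RS}}X$, natural in $X$. The bipointing matches: $\bot$ is the collapsed image of $X$ and $\top$ is the cone vertex on both sides. The one subtlety is $X=\varnothing$, where both sides are $\partial\cC[1]$ and the statement can be checked directly.

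The main obstacle is the cocontinuity bookkeeping. The join functor $-\star\Delta[0]$ is not cocontinuous into $m\sset$ itself, only into the undercategory, so the extension argument of \cref{IsoOnReps} must be run with values in ${}^{[0]/}\omega\cat$ rather than $\omega\cat$. One also has to verify that the marking on joins is compatible with this reflection. The naturality content itself is all contained in \cref{CvsCone}.
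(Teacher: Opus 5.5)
Your proposal is correct and follows the paper's argument. Both use the same three ingredients: cocontinuity of the relevant composites of left adjoints, the density statement \cref{IsoOnReps}, and \cref{CvsCone} on the generators $\Delta[\ell]$ and $\Delta[\ell]_t$. The only difference is the order of bookkeeping: you extend the cone comparison to all $X$, as cocontinuous functors into ${}^{[0]/}\omega\cat$, and then pass to quotients, whereas the paper passes to quotients on representables and applies \cref{IsoOnReps} directly to $c^{\mathrm{RS}}\Sigma$ and $\Sigma c^{\mathrm{RS}}$ as functors into ${}^{\partial\cC[1]/}\omega\cat$. In either version the case $X=\varnothing$ needs no separate check, since the quotient is a pushout along $X\to\Delta[0]$.
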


\begin{proof}
Given the adjunctions from \cref{SuspensionAdjunctionCat,SuspensionAdjunctionMarked,NerveAdjunctionMarked}, both sides are cocontinuous in $X$.
Hence, by \cref{IsoOnReps}, it suffices to prove it for representables.
Now, for $\ell\geq0$ (resp.~$\ell>0$), we have natural isomorphisms of bipointed $\omega$-categories
\[
\begin{array}{lll}
c^{\mathrm{RS}}\Sigma\Delta[\ell]_{(t)}&\cong c^{\mathrm{RS}}(\Delta[\ell]_{(t)}\star\Delta[0]/\Delta[\ell]_{(t)})&\text{\cref{SuspensionMarked}}\\
&\cong c^{\mathrm{RS}}(\Delta[\ell]_{(t)}\star\Delta[0])/c^{\mathrm{RS}}\Delta[\ell]_{(t)}&\text{\cref{NerveAdjunctionMarked}}\\
&\cong(c^{\mathrm{RS}}(\Delta[\ell]_{(t)})\star [0])/c^{\mathrm{RS}}\Delta[\ell]_{(t)}&\text{\cref{CvsCone}}\\
&\cong\Sigma c^{\mathrm{RS}}\Delta[\ell]_{(t)},&\text{\cref{SuspensionCat}}
\end{array}
\]
concluding the proof.
\end{proof}

\begin{thm}
\label{NvsHom}
For any $\omega$-category $\cC$
and objects $x$ and $y$, there is a natural isomorphism of marked simplicial sets
\[
(N^{\mathrm{RS}}\cC)(x,y)\cong N^{\mathrm{RS}}(\cC(x,y)).
\]  
\end{thm}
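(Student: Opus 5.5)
The plan is a Yoneda argument on (marked) simplices, reduced to \cref{CvsSigma} by composing adjunctions. By \cref{IsoOnReps}, a marked simplicial set is determined by its sets of $k$-simplices and marked $k$-simplices, together with their functoriality in $t\Delta$. So it suffices to produce bijections
\[
\big((N^{\mathrm{RS}}\cC)(x,y)\big)_{[k]_{(t)}}\cong \big(N^{\mathrm{RS}}(\cC(x,y))\big)_{[k]_{(t)}},
\]
natural in $\Delta[k]_{(t)}\in t\Delta$ and in $(\cC,x,y)\in{}^{\partial\cC[1]/}\omega\cat$.

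For the left-hand side, \cref{SuspensionMarked} gives
\[
\big((N^{\mathrm{RS}}\cC)(x,y)\big)_{[k]_{(t)}}=\big({}^{\partial\Delta[1]/}m\sset\big)\big(\Sigma\Delta[k]_{(t)},(N^{\mathrm{RS}}\cC,x,y)\big).
\]
The adjunction $c^{\mathrm{RS}}\dashv N^{\mathrm{RS}}$ of \cref{NerveAdjunctionMarked} slices to an adjunction between bipointed objects. This uses $c^{\mathrm{RS}}\partial\Delta[1]\cong\partial\cC[1]$, which holds because $c^{\mathrm{RS}}$ is cocontinuous and $c^{\mathrm{RS}}\Delta[0]\cong\cO[0]=\cC[0]$. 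The left-hand side is therefore in natural bijection with
\[
\big({}^{\partial\cC[1]/}\omega\cat\big)\big(c^{\mathrm{RS}}\Sigma\Delta[k]_{(t)},(\cC,x,y)\big).
\]
By \cref{CvsSigma}, as an isomorphism of bipointed $\omega$-categories, this equals ${}^{\partial\cC[1]/}\omega\cat\big(\Sigma c^{\mathrm{RS}}\Delta[k]_{(t)},(\cC,x,y)\big)$. By the suspension–hom adjunction of \cref{SuspensionAdjunctionCat}, that is $\omega\cat\big(c^{\mathrm{RS}}\Delta[k]_{(t)},\cC(x,y)\big)$. Applying \cref{NerveAdjunctionMarked} once more identifies this with $m\sset\big(\Delta[k]_{(t)},N^{\mathrm{RS}}(\cC(x,y))\big)$, which is exactly the right-hand side.

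Each step is natural in $\Delta[k]_{(t)}$ and in $(\cC,x,y)$. In particular, the isomorphism of \cref{CvsSigma} is natural in $X$, hence with respect to all maps of $t\Delta$. So the bijections assemble into an isomorphism of presheaves on $t\Delta$ between two objects of $m\sset$, and hence into an isomorphism of marked simplicial sets.

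Equivalently, the whole argument is the statement that the right adjoints of two commuting squares of left adjoints agree: $c^{\mathrm{RS}}\circ\Sigma\cong\Sigma\circ c^{\mathrm{RS}}$ as functors $m\sset\to{}^{\partial\cC[1]/}\omega\cat$. By uniqueness of adjoints this yields $(-)(-,-)\circ N^{\mathrm{RS}}\cong N^{\mathrm{RS}}\circ(-)(-,-)$. This formulation makes naturality automatic, and I would present it this way, with the simplex-level computation as the explicit description.

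The main obstacle is not this formal argument but ensuring that \cref{CvsSigma} really holds as an isomorphism of \emph{bipointed} objects, naturally in $X$. This requires the isomorphism of \cref{CvsCone} to be compatible with the inclusions of $c^{\mathrm{RS}}\Delta[\ell]_{(t)}$ and of the cone point, so that the quotients identify the basepoints $\bot,\top$ correctly. Since the paper takes this as established, I would only check briefly that the cone point of $\Delta[\ell]\star\Delta[0]$ goes to the cone point of $c^{\mathrm{RS}}\Delta[\ell]\star[0]$. I would also check that the slice of the $c^{\mathrm{RS}}\dashv N^{\mathrm{RS}}$ adjunction uses $N^{\mathrm{RS}}\partial\cC[1]$-compatible basepoints, which is immediate since $N^{\mathrm{RS}}\cC$ has the objects of $\cC$ as vertices.
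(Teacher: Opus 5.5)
Your proposal is correct and is essentially the paper's argument. The paper proves this in one line by combining \cref{CvsSigma} with the adjunctions of \cref{SuspensionAdjunctionMarked,SuspensionAdjunctionCat,NerveAdjunctionMarked}, which is exactly your observation that $c^{\mathrm{RS}}\Sigma\cong\Sigma c^{\mathrm{RS}}$ as left adjoints into bipointed $\omega$-categories forces the right adjoints to agree, and your simplex-level Yoneda computation is an explicit unpacking of that same argument.
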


\begin{proof}
This follows from \cref{CvsSigma,SuspensionAdjunctionMarked,SuspensionAdjunctionCat,NerveAdjunctionMarked}.
\end{proof}

We now discuss the compatibility of $c^{\mathrm{RS}}$ with the iterated suspension functor:

\begin{prop}
\label{CvsIteratedSigma}
Let $\ell>0$. Given any marked simplicial set $X$ there is a natural isomorphism of $\omega$-categories under $\partial\cC[\ell]$
\[
c^{\mathrm{RS}}\Sigma^\ell X\cong\Sigma^\ell c^{\mathrm{RS}}X.
\]
\end{prop}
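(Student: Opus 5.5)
The plan is to argue by induction on $\ell>0$, using \cref{CvsSigma} as the base case $\ell=1$ and as the inductive engine. For $\ell=1$, \cref{CvsSigma} already gives a natural isomorphism of bipointed $\omega$-categories $c^{\mathrm{RS}}\Sigma X\cong\Sigma c^{\mathrm{RS}}X$. Bipointed means under $\partial\cC[1]$, after identifying $c^{\mathrm{RS}}\partial\Delta[1]\cong\partial\cC[1]$.

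For the inductive step, assume we have a natural isomorphism $c^{\mathrm{RS}}\Sigma^\ell X\cong\Sigma^\ell c^{\mathrm{RS}}X$ under $\partial\cC[\ell]$, where $c^{\mathrm{RS}}(\Sigma^\ell\varnothing)\cong\partial\cC[\ell]$. Apply \cref{CvsSigma} to the marked simplicial set $Y=\Sigma^\ell X$:
\[
c^{\mathrm{RS}}\Sigma^{\ell+1}X=c^{\mathrm{RS}}\Sigma(\Sigma^\ell X)\cong\Sigma c^{\mathrm{RS}}(\Sigma^\ell X)\cong\Sigma\Sigma^\ell c^{\mathrm{RS}}X=\Sigma^{\ell+1}c^{\mathrm{RS}}X.
\]
The second isomorphism is $\Sigma$ applied to the inductive hypothesis. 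All isomorphisms are natural in $X$, since each ingredient is a natural isomorphism and $\Sigma$ is a functor.

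It remains to check compatibility with the structure maps from $\partial\cC[\ell+1]$. The map $\partial\cC[\ell+1]\cong\Sigma^{\ell+1}\varnothing\to\Sigma^{\ell+1}c^{\mathrm{RS}}X$ is $\Sigma^{\ell+1}$ of $\varnothing\to c^{\mathrm{RS}}X$. Similarly, $c^{\mathrm{RS}}\Sigma^{\ell+1}\varnothing\to c^{\mathrm{RS}}\Sigma^{\ell+1}X$ is $c^{\mathrm{RS}}\Sigma^{\ell+1}$ of $\varnothing\to X$. Naturality of the constructed isomorphism, applied to the map $\varnothing\to X$, shows that these structure maps correspond under the isomorphism. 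So the whole construction reduces to identifying $c^{\mathrm{RS}}\Sigma^{\ell+1}\varnothing\cong\Sigma^{\ell+1}\varnothing\cong\partial\cC[\ell+1]$, which is the instance $X=\varnothing$ of the same isomorphism, using $c^{\mathrm{RS}}\varnothing=\varnothing$ since $c^{\mathrm{RS}}$ is a left adjoint.

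The only delicate point is bookkeeping. The isomorphism of \cref{CvsSigma} must be natural in $X$ as a plain functor $m\sset\to\omega\cat$, not merely under $\partial\cC[1]$, so that it can be applied to $\Sigma^\ell X$ and to morphisms between such objects. It must also be compatible with the identification $\Sigma(\Sigma^\ell\varnothing)=\Sigma^{\ell+1}\varnothing$ used in \cref{IteratedSuspensionAdjunctionCat,IteratedSuspensionAdjunctionMarked}. Both hold because \cref{CvsSigma} was built from isomorphisms natural with respect to all maps in $t\Delta$, extended by cocontinuity; I expect no further obstacle.
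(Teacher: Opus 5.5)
Your proof is correct and is essentially the paper's argument: an induction on $\ell$ whose step combines the one-step comparison of $c^{\mathrm{RS}}$ with cones and suspensions (\cref{CvsCone,CvsSigma}) with the inductive hypothesis. The difference is that the paper first reduces to representables and redoes the cone computation on $\Sigma^\ell\Delta[k]_{(t)}$, which is not itself representable, whereas you apply \cref{CvsSigma} directly to $\Sigma^\ell X$ and derive compatibility with the structure maps from $\partial\cC[\ell+1]$ from naturality along $\varnothing\to X$; this is cleaner and makes explicit a point the paper leaves implicit.
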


\begin{proof}
Given the adjunctions from \cref{SuspensionAdjunctionCat,SuspensionAdjunctionMarked,NerveAdjunctionMarked}, both sides are cocontinuous in $X$.
Hence, by \cref{IsoOnReps}, it suffices to prove it for representables.
Now, for $\ell\geq0$ (resp.~$\ell>0$), by \cref{CvsCone,CvsSigma} we have natural isomorphisms of $\omega$-categories under $\partial\cC[\ell]$
\[
\begin{array}{lll}
  c^{\mathrm{RS}}(\Sigma^{\ell+1} \Delta[\ell]_{(t)})
  &\cong c^{\mathrm{RS}}((\Sigma^\ell X\star\Delta[0])/\Sigma^\ell \Delta[\ell]_{(t)})\\
  &\cong(c^{\mathrm{RS}}\Sigma^\ell \Delta[\ell]_{(t)}\star\cO[0])/c^{\mathrm{RS}}\Sigma^\ell \Delta[\ell]_{(t)}\\
  &\cong(\Sigma^\ell c^{\mathrm{RS}}\Delta[\ell]_{(t)}\star\cO[0])/\Sigma^\ell c^{\mathrm{RS}}\Delta[\ell]_{(t)}\\
  &\cong\Sigma(\Sigma^{\ell}c^{\mathrm{RS}}\Delta[\ell]_{(t)})\\
  &\cong
  \Sigma^{\ell+1} c^{\mathrm{RS}}\Delta[\ell]_{(t)},
\end{array}
\]
concluding the proof.
\end{proof}

\begin{prop}
\label{NvsIteratedHom}
Given $(\cC,\delta\colon \partial\cC[\ell]\to \cC)$ in ${}^{\partial\cC[\ell]/}\omega\cat$,
there is a natural isomorphism of marked simplicial sets
\[
N^{\mathrm{RS}}(\cC^{(\ell)}(\delta))\cong (N^{\mathrm{RS}}\cC)^{(\ell)}(\widehat\delta),
\]
where $\widehat\delta\colon \Sigma^{\ell}\varnothing\to N^{\mathrm{RS}}\cC$ is the transpose of $\delta\colon \partial\cC[\ell]\to \cC$ through the adjunction from \cref{NerveAdjunctionMarked}.
\end{prop}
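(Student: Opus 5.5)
The plan is a Yoneda argument that compares (marked) simplices on both sides through the adjunctions at hand, in the same way that \cref{NvsHom} is deduced from \cref{CvsSigma}.

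First I make the transpose $\widehat\delta$ precise. By \cref{CvsIteratedSigma} applied to $X=\varnothing$ we have $c^{\mathrm{RS}}\Sigma^\ell\varnothing\cong\Sigma^\ell c^{\mathrm{RS}}\varnothing=\Sigma^\ell\varnothing\cong\partial\cC[\ell]$. So $\delta$ transposes under $c^{\mathrm{RS}}\dashv N^{\mathrm{RS}}$ to a map $\widehat\delta\colon\Sigma^\ell\varnothing\to N^{\mathrm{RS}}\cC$. More generally, the adjunction $c^{\mathrm{RS}}\dashv N^{\mathrm{RS}}$ together with this identification induces an adjunction between undercategories
\[
c^{\mathrm{RS}}\colon{}^{\Sigma^\ell\varnothing/}m\sset\rightleftarrows{}^{\partial\cC[\ell]/}\omega\cat\noloc N^{\mathrm{RS}}.
\]
Here the right adjoint sends $(\cC,\delta)$ to $(N^{\mathrm{RS}}\cC,\widehat\delta)$, and the left adjoint sends $(P,\gamma)$ to $(c^{\mathrm{RS}}P,\gamma^\flat)$, where $\gamma^\flat$ is the composite of $c^{\mathrm{RS}}\gamma$ with the identification above.

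Next, for $T$ ranging over the objects $\Delta[k]$ and $\Delta[k]_t$ of $t\Delta$, I compute, naturally in $T$:
\[
\begin{array}{ll}
m\sset\bigl(T,(N^{\mathrm{RS}}\cC)^{(\ell)}(\widehat\delta)\bigr)&\cong{}^{\Sigma^\ell\varnothing/}m\sset\bigl(\Sigma^\ell T,(N^{\mathrm{RS}}\cC,\widehat\delta)\bigr)\\
&\cong{}^{\partial\cC[\ell]/}\omega\cat\bigl(c^{\mathrm{RS}}\Sigma^\ell T,(\cC,\delta)\bigr)\\
&\cong{}^{\partial\cC[\ell]/}\omega\cat\bigl(\Sigma^\ell c^{\mathrm{RS}}T,(\cC,\delta)\bigr)\\
&\cong\omega\cat\bigl(c^{\mathrm{RS}}T,\cC^{(\ell)}(\delta)\bigr)\\
&\cong m\sset\bigl(T,N^{\mathrm{RS}}(\cC^{(\ell)}(\delta))\bigr).
\end{array}
\]
The justifications for the steps are, in order:
\begin{enumerate}
\item \cref{IteratedSuspensionAdjunctionMarked};
\item the sliced adjunction $c^{\mathrm{RS}}\dashv N^{\mathrm{RS}}$ above;
\item \cref{CvsIteratedSigma}, used as an isomorphism under $\partial\cC[\ell]$;
\item \cref{IteratedSuspensionAdjunctionCat};
\item \cref{NerveAdjunctionMarked}.
\end{enumerate}
A marked simplicial set is determined by its (marked) simplices, that is, by maps from the objects of $t\Delta$ compatible with the structure maps of $t\Delta$. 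So naturality in $T$ yields the desired isomorphism, and it is natural in $(\cC,\delta)$ because every step is.

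The step I expect to be the main obstacle is the compatibility of basepoints in the third isomorphism. The isomorphism $c^{\mathrm{RS}}\Sigma^\ell T\cong\Sigma^\ell c^{\mathrm{RS}}T$ of \cref{CvsIteratedSigma} has to restrict, along $\Sigma^\ell!$, to the fixed identification $c^{\mathrm{RS}}\Sigma^\ell\varnothing\cong\partial\cC[\ell]$ used to define $\widehat\delta$. Otherwise the two hom-sets would be taken over different structure maps. To handle this, I would take $X=\varnothing$ in \cref{CvsIteratedSigma} as the definition of that identification; naturality in $X$ applied to $!\colon\varnothing\to T$ then gives the compatibility directly.

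I would also note that \cref{CvsIteratedSigma} is stated only for $\ell>0$. The case $\ell=0$ is trivial: $\partial\cC[0]=\varnothing$, both iterated homs are the identity, and the claim reduces to the tautology $N^{\mathrm{RS}}\cC\cong N^{\mathrm{RS}}\cC$.

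An alternative route is induction on $\ell$ using \cref{NvsHom}, writing $\cC^{(\ell+1)}(\delta)$ as a hom of $\cC^{(\ell)}$. However, the bookkeeping of how the boundary data splits is messier than the direct Yoneda argument, so I would use the latter.
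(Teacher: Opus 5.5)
Your proposal is correct and is essentially the paper's argument. The paper notes that $c^{\mathrm{RS}}\Sigma^\ell\cong\Sigma^\ell c^{\mathrm{RS}}$ as left adjoints $m\sset\to{}^{\partial\cC[\ell]/}\omega\cat$ (via \cref{CvsIteratedSigma}) and concludes that the corresponding right adjoints commute; your chain of hom-set bijections plus Yoneda is just that step spelled out. Your handling of basepoint compatibility (defining the identification as the $X=\varnothing$ instance and using naturality along $!\colon\varnothing\to T$) and of the trivial case $\ell=0$ makes explicit details the paper leaves implicit.
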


\begin{proof}
Using \cref{CvsIteratedSigma}, we know that for $\ell\ge0$  there is an isomorphism of $\omega$-categories
\[
c^{\mathrm{RS}}\Sigma^{\ell}\varnothing\cong\partial\cC[\ell],
\]
and that there is a commutative diagram of left adjoint functors
\[
\begin{tikzcd}
m\sset\arrow[r,"\Sigma^{\ell}"]\arrow[d,"c^{\mathrm{RS}}"swap]&{}^{\Sigma^{\ell}\varnothing/}m\sset
\arrow[d,"c^{\mathrm{RS}}"]\\
\omega\cat\arrow[r,"\Sigma^{\ell}"swap]&{}^{\partial\cC[\ell]/}\omega\cat
\end{tikzcd}
\]
By \cref{IteratedSuspensionAdjunctionMarked,IteratedSuspensionAdjunctionCat}, we then obtain a diagram of right adjoint functors
\[
\begin{tikzcd}
m\sset&{}^{\Sigma^{\ell}\varnothing/}m\sset\arrow[l,"(-)^{(\ell)}(-)"swap]\\
\omega\cat\arrow[u,"N^{\mathrm{RS}}"]&{}^{\partial\cC[\ell]/}\omega\cat\arrow[u,"\widehat{(-)}"swap]\arrow[l,"(-)^{(\ell)}(-)"]
\end{tikzcd}
\]
The statement follows.
\end{proof}


\section{About complicial sets - homotopy theory}

\subsection{Background on complicial sets}

\label{ComplicialSets}

In this section we will assume the reader to be familiar with the basics of model category theory; see e.g.~\cite{hovey}.

Now introduce complicial sets and homotopy theory. The notation and terminology from this subsection is standard (cf.~\cite{VerityComplicialAMS,VerityComplicialI,EmilyNotes,or,RiehlVerityBook}).

\begin{notn}
\label{ComplicialNotation}
We denote
\begin{itemize}[leftmargin=*]
    \item by $\Delta^k[m]$, for $0\leq k \leq m$, the simplicial set given by the standard $m$-simplex in which a non-degenerate simplex is marked if and only if it contains the vertices $\{k-1,k,k+1\}\cap [m]$;
    \item by $\Delta^k[m]'$, for $0\leq k \leq m$, the simplicial set given by the standard $m$-simplex with marking obtained from $\Delta^k[m]$ by additionally marking the $(k-1)$-st and $(k+1)$-st $(m-1)$-dimensional face of $\Delta[m]$, whenever defined;
    \item by $\Delta^k[m]''$, for $0\leq k \leq m$, the simplicial set given by the standard $m$-simplex with marking obtained from $\Delta^k[m]'$ by additionally marking the $k$-th face of $\Delta[m]$;
    \item by $\Lambda^k[m]$, for $0\leq k \leq m$, the simplicial set given by the usual $k$-horn $\Lambda^k[m]$ with marking inherited from $\Delta^k[m]$.
    \item by $\Delta[m]^{\sharp}$ the simplicial set given by the standard $m$-simplex with the maximal marking.
    \item by $\Delta[m]_{t}$ the standard $m$-simplex in which the top $m$-dimensional simplex is marked, as well as all degenerate simplices.
    \item by $\Delta[3]^{\mathrm{eq}}$ the standard $3$-simplex in which the $1$-simplices $[0,2]$ and $[1,3]$ are marked, as well as all degenerate $1$-simplices and all simplices in dimension $2$ or higher.
    \item by $\Delta[3]^{\mathrm{eq}}\star\Delta[\ell]$ the simplicial set given by the join of a standard $3$-simplex with a standard $\ell$-simplex, which is isomorphic to the standard $(3+1+\ell)$-simplex, in which a simplex $\sigma\star\tau$ is marked if and only if $\sigma$ is marked in $\Delta[3]^{\mathrm{eq}}$ or $\tau$ is a degenerate simplex of $\Delta[\ell]$.
    \item by $\Delta[3]^{\sharp}\star\Delta[\ell]$ the simplicial set given by the join of a standard $3$-simplex with a standard $\ell$-simplex, which is isomorphic to the standard $(3+1+\ell)$-simplex, in which a simplex $\sigma\star\tau$ is marked if and only if $\sigma$ is marked in $\Delta[3]^{\sharp}$ (that is, $\sigma$ has positive dimension) or $\tau$ is a degenerate simplex of $\Delta[\ell]$.
\end{itemize}
\end{notn}

\begin{defn}
\label{ComplicialAnodyne}
Let $X$ be a marked simplicial set.
\begin{itemize}[leftmargin=*]
    \item We say that $X$ is a \emph{complicial set} if it has the right lifting property with respect to the \emph{complicial horn extension}
    \[\Lambda^k[m]\to \Delta^k[m].\]
    for $m> 1$ and $0\leq k\leq m$
    and with respect to the \emph{complicial thinness extension}
    \[\Delta^k[m]' \to \Delta^k[m]''\]
    for $m\geq 2$ and $0\leq k \leq m$.
    \item We say that $X$ is a \emph{saturated complicial set} if it is a complicial set and it has the right lifting property with respect to
    \emph{complicial saturation extension}
    \[
    \Delta[3]^{\mathrm{eq}}\star\Delta[m]\to\Delta[3]^{\sharp}\star\Delta[m]\]
    for $m\ge-1$.
    \item We say that $X$ is an \emph{$n$-complicial set} if it is a saturated complicial set and it has the right lifting property with respect to
    \emph{triviality extension}
\[\Delta[m]\to\Delta[m]_t.\]
for $m>n$.
\end{itemize}
\end{defn}

We consider the following model structures on marked simplicial sets:

\begin{notn}
\label{ModelStructuresMarked}
We consider the following model structures on $m\sset$:
\begin{itemize}[leftmargin=*]
    \item Let $m\sset_{\mathrm{cmp}}$ denote the model structure on $m\sset$ from \cite[\textsection6]{VerityComplicialI}, in which the fibrant objects are the \emph{complicial sets} (there referred to as ``weak complicial sets'') and the cofibrations are the monomorphisms on underlying simplicial sets.
    \item Let $m\sset_{\mathrm{cmp,sat}}$ denote the model structure on $m\sset$ from \cite[Example 3.3.5]{EmilyNotes} (established in \cite[\textsection1.3]{or}), in which the fibrant objects are the \emph{saturated complicial sets} and the cofibrations are the monomorphisms. This model structure is obtained from $m\sset_{\mathrm{cmp}}$ by localizing at the saturation anodyne extensions for $\ell\geq-1$
    \[\Delta[3]^{\mathrm{eq}}\star\Delta[\ell]\hookrightarrow
    \Delta[3]^\sharp\star\Delta[\ell].\]
    \item Let $m\sset_{(\infty,n)}$ denote the model structure on $m\sset$ from \cite[\textsection1.3]{or}, in which the fibrant objects are the $n$-trivial saturated complicial sets, a.k.a.\ the \emph{$n$-complicial sets} and the cofibrations are the monomorphisms. This model structure is obtained from $m\sset_{\mathrm{cmp,sat}}$ by localizing at the triviality extensions for $p>n$
    \[\Delta[p]\to\Delta[p]_t.\]
    \end{itemize}   
\end{notn}

\begin{rmk}
By \cref{ThOfEntire}, if $A \to A'$ is an acyclic cofibration in one of the model structure on $m\sset$ from \cref{ModelStructuresMarked} and is an isomorphism on underlying simplicial sets, then $\mathrm{th}_n$ takes it to an acyclic cofibration in the same model structure.
\end{rmk}

We consider the following Quillen pairs:

\begin{prop}
\label{TruncationsQuillenMarked}
For all $n\geq0$, the functors from \cref{TruncationsAdjunctionMarked} define Quillen pairs:
\begin{enumerate}[ref=(\arabic*)]
    \item\label{QPcmp} $\mathrm{th}_n\colon m\sset_{\mathrm{cmp}}\rightleftarrows m\sset_{\mathrm{cmp}}\noloc\mathrm{sp}_n$;
\item \label{QPsat}$\mathrm{th}_n\colon m\sset_{\mathrm{cmp,sat}}\rightleftarrows m\sset_{\mathrm{cmp,sat}}\noloc\mathrm{sp}_n$;
\item\label{QPnleft} $
\mathrm{th}_n\colon m\sset_{(\infty,n)}\rightleftarrows m\sset_{\mathrm{cmp,sat}}\noloc\mathrm{sp}_n
$;
\item \label{QPnright}$
\mathrm{th}_n\colon m\sset_{\mathrm{cmp,sat}}\rightleftarrows m\sset_{(\infty,n)}\noloc\mathrm{sp}_n$.
\end{enumerate}
\end{prop}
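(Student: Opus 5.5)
The plan is to check the four Quillen pairs through the standard criterion for model structures on marked simplicial sets in which the cofibrations are exactly the monomorphisms. By \cref{TruncationsAdjunctionMarked} we already have the adjunctions, so it remains to see that the left adjoint $\mathrm{th}_n$ preserves cofibrations and acyclic cofibrations.

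Cofibrations come first. A map is a cofibration when its underlying simplicial map is a monomorphism. The functor $\mathrm{th}_n$ leaves underlying simplicial sets alone and only enlarges the markings, so it preserves monomorphisms. This settles the cofibration half in all four cases at once.

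For acyclic cofibrations, we use the Joyal-type criterion for model structures with all objects cofibrant (e.g.\ \cite[Proposition E.2.14]{JoyalVolumeII}, or Hirschhorn's criterion for left Bousfield localizations). It says that an adjunction whose left adjoint preserves cofibrations is Quillen as soon as the right adjoint preserves fibrations between fibrant objects. Equivalently, the left adjoint should send a set of generating anodyne maps to weak equivalences. Each model structure in \cref{ModelStructuresMarked} comes with a generating set: the complicial horn and thinness extensions, then the saturation extensions, then the triviality extensions $\Delta[p]\to\Delta[p]_t$ for $p>n$. We therefore apply $\mathrm{th}_n$ to each generator $A\to B$ and show the image is a weak equivalence in the target structure.

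The cases go as follows.
\begin{itemize}
\item If $A\to B$ is entire (thinness, saturation, triviality), \cref{ThOfEntire} exhibits $\mathrm{th}_nA\to\mathrm{th}_nB$ as a pushout of $A\to B$. It is therefore again an acyclic cofibration, in any structure where $A\to B$ was one. This covers \ref{QPcmp} and \ref{QPsat} for these generators.
\item The triviality extensions with $p>n$ become isomorphisms under $\mathrm{th}_n$. That is exactly what \ref{QPnleft} needs, since $m\sset_{(\infty,n)}$ is the localization of $m\sset_{\mathrm{cmp,sat}}$ at those maps.
\item For \ref{QPnright}, the target $m\sset_{(\infty,n)}$ is a further localization of $m\sset_{\mathrm{cmp,sat}}$. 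Hence \ref{QPnright} follows by composing \ref{QPsat} with the identity left Quillen functor $m\sset_{\mathrm{cmp,sat}}\to m\sset_{(\infty,n)}$.
\end{itemize}

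The main obstacle is the complicial horn extensions $\Lambda^k[m]\to\Delta^k[m]$, which are not entire. I will handle them as follows.
\begin{itemize}
\item If $m\le n$, then $\mathrm{th}_n\Lambda^k[m]\to\mathrm{th}_n\Delta^k[m]$ is the horn with all simplices of dimension greater than $n$ marked. Such simplices do not occur (or only degenerately), so the map is unchanged.
\item If $m>n$, I factor the map. First take the pushout of $\Lambda^k[m]\to\Delta^k[m]$ along $\Lambda^k[m]\to\mathrm{th}_n\Lambda^k[m]$. This is an acyclic cofibration since horn extensions are anodyne.
\item The resulting map into $\mathrm{th}_n\Delta^k[m]$ is entire and only marks the top simplex and faces of dimension greater than $n$ not yet marked.
\item Those faces that are themselves $(m-1)$-faces can be marked via thinness extensions $\Delta^k[m]'\to\Delta^k[m]''$, because the top simplex and the faces $d_{k\pm1}$ are marked in the appropriate order. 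So this entire map is a composite of pushouts of complicial thinness extensions, which are anodyne in $m\sset_{\mathrm{cmp}}$.
\end{itemize}
If the bookkeeping of which faces to mark first becomes awkward, I would fall back on Verity's direct result \cite[Lemma 115]{VerityComplicialAMS} that $\mathrm{th}_n$ preserves complicial anodyne maps. That result gives \ref{QPcmp} outright, and \ref{QPsat} then follows because localizations only add the entire saturation generators, which are already treated above.
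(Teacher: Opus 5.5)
Your proposal is correct and follows the same route as the paper: a Joyal--Tierney-type criterion on generating anodyne maps (the paper's variant of \cite[Lemma 1.8]{ORfp}), \cref{ThOfEntire} for the entire saturation generators, triviality extensions in dimensions above $n$ becoming isomorphisms together with Hirschhorn's localization criterion for \ref{QPnleft}, and \ref{QPsat} composed with the identity localization for \ref{QPnright}. The only difference is that the paper cites \cite[Lemma 25]{VerityComplicialI} for the horn and thinness extensions while you check them by hand, and your horn argument is sound with no real bookkeeping: apart from the top simplex (already marked in $\Delta^k[m]$), the only simplex outside $\Lambda^k[m]$ is the face $d_k$, so for $m-1>n$ a single pushout of $\Delta^k[m]'\to\Delta^k[m]''$ suffices (its source maps in because $d_{k\pm1}\subseteq\Lambda^k[m]$ have dimension $m-1>n$), and for $m-1\leq n$ nothing changes.
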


\begin{proof}
  It was shown in \cite[Lemma 25]{VerityComplicialI} that $\mathrm{th}_n$ sends any complicial horn extension and any thinness extension to an acyclic cofibration in $m\sset_{\textrm{cmp}}$.
  So, using a variant of \cite[Lemma 1.8]{ORfp} (which builds on \cite[Proposition 7.15]{JT}), we obtain the Quillen pair \ref{QPcmp}.

Using \cref{ThOfEntire}, we see that the functor $\mathrm{th}_n$ sends any saturation extension (which is an entire map) to a pushout of a saturation extension. So, using \cite[Theorem 3.3.20]{hirschhorn} on the Quillen pair from \ref{QPcmp},
we obtain the Quillen pair \ref{QPsat}.

By inspection we see that $\mathrm{th}_n$ sends triviality maps in dimensions higher than $n$ to isomorphisms. So, using \cite[Theorem 3.3.20]{hirschhorn} on the Quillen pair \ref{QPsat}, we obtain the Quillen pair \ref{QPnleft}.

Composing the Quillen pair from \ref{QPsat}
with the localization
\[
\id\colon m\sset_{\mathrm{cmp,sat}}\rightleftarrows m\sset_{(\infty,n)}\noloc\id
\]
we obtain the Quillen pair \ref{QPnright}.
\end{proof}

\begin{prop}
\label{SuspensionQuillenMarked}
The suspension-hom adjunction from \cref{IteratedSuspensionAdjunctionMarked} forms a Quillen adjunction
\[
\Sigma\colon m\sset_{\mathrm{cmp,sat}}
\rightleftarrows
{}^{\partial\Delta[1]/}m\sset_{\mathrm{cmp,sat}}
\noloc(-)^{(-)}(-),
\]
where the undercategory ${}^{\partial\Delta[1]/}m\sset_{\mathrm{cmp,sat}}
$ is equipped with the induced model structure of \cite{HirschhornOvercategories}.
\end{prop}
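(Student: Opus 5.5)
We have to show that $\Sigma\colon m\sset_{\mathrm{cmp,sat}}\to{}^{\partial\Delta[1]/}m\sset_{\mathrm{cmp,sat}}$ is left Quillen. The model structure on the undercategory is created by the forgetful functor $U\colon {}^{\partial\Delta[1]/}m\sset\to m\sset$. It therefore suffices to show that the composite $U\Sigma$ sends cofibrations to cofibrations and acyclic cofibrations to weak equivalences. We argue as in \cref{SuspensionQuillenCat}, since $\Sigma$ is a left adjoint by \cref{SuspensionAdjunctionMarked}.

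\emph{Cofibrations.} Cofibrations in $m\sset_{\mathrm{cmp,sat}}$ are the monomorphisms. We check directly that $\Sigma X=(X\star\Delta[0])/X$ preserves monomorphisms. The join $-\star\Delta[0]$ preserves monomorphisms, and the quotient collapses $X$ onto the single vertex $\bot$. By the description of non-degenerate simplices of $\Sigma X$ (the case $\ell=1$ of \cref{IteratedSuspensionMarked}), these are $\bot$, $\top$, and $\Sigma\tau$ for $\tau$ non-degenerate in $X$, with the marking of $\Sigma\tau$ detected by that of $\tau$. Hence a mono $X\hookrightarrow Y$ induces an injection on simplices and a map reflecting the relevant marking data, so $\Sigma X\to\Sigma Y$ is a monomorphism.

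\emph{Acyclic cofibrations.} Since $\Sigma$ preserves colimits and monomorphisms, and $m\sset_{\mathrm{cmp,sat}}$ is a left Bousfield localization of $m\sset_{\mathrm{cmp}}$, it suffices, by the standard criterion for Quillen pairs between model structures whose cofibrations are the monomorphisms (\cite[Lemma 1.8]{ORfp}, building on \cite[Proposition 7.15]{JT}), to show that $\Sigma$ sends each generating anodyne map to a weak equivalence. These are the complicial horn extensions, the thinness extensions, and the saturation extensions. For each such $A\to B$ we identify $\Sigma A\to\Sigma B$. Since $\Sigma\Delta[m]$ is a quotient of $\Delta[m]\star\Delta[0]=\Delta[m+1]$, one expects the following:
\begin{itemize}
\item $\Sigma(\Lambda^k[m]\to\Delta^k[m])$ is a pushout of $\Lambda^k[m+1]\to\Delta^k[m+1]$ along the collapse of the last face $\Delta[m]$;
\item thinness extensions go to pushouts of thinness extensions;
\item $\Sigma(\Delta[3]^{\mathrm{eq}}\star\Delta[\ell])\to\Sigma(\Delta[3]^\sharp\star\Delta[\ell])$ is a pushout of $\Delta[3]^{\mathrm{eq}}\star\Delta[\ell+1]\to\Delta[3]^\sharp\star\Delta[\ell+1]$.
\end{itemize}
This is the reason the saturation extensions are indexed by joins with $\Delta[\ell]$: they are exactly stable under $-\star\Delta[0]$. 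Concretely, we would show $A\star\Delta[0]\to B\star\Delta[0]$ is anodyne, citing \cite[\textsection3]{VerityComplicialI} and \cite{ORfp} for the compatibility of joins with complicial anodyne maps. We would then note that $\Sigma B$ is the pushout $\Sigma A\amalg_{A\star\Delta[0]}(B\star\Delta[0])$ modulo the collapse of $B$. Because $A\to B$ is entire in the saturation case, the collapse causes no extra identifications.

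\emph{Main obstacle.} The key difficulty is the cube/pushout argument that $\Sigma A\to\Sigma B$ is a pushout of $A\star\Delta[0]\to B\star\Delta[0]$, or more precisely that it is a weak equivalence. Collapsing $B$ versus collapsing $A$ differ by $B/A$. I would handle this by writing $\Sigma B=(B\star\Delta[0])\amalg_{B}\Delta[0]$ and $\Sigma A$ similarly. The map $\Sigma A\to\Sigma B$ is then a pushout of $(A\star\Delta[0])\amalg_A B\to B\star\Delta[0]$, followed by collapsing along $B\to\Delta[0]$. The first map is the Leibniz join of $A\to B$ with $\varnothing\to\Delta[0]$, which is anodyne by \cite{ORfp}, and pushouts preserve this. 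For the saturation step, if the Leibniz join statement is only available in $m\sset_{\mathrm{cmp}}$, one applies \cite[Theorem 3.3.20]{hirschhorn} exactly as in \cref{TruncationsQuillenMarked}.
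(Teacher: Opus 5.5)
Your proposal is correct and is essentially the paper's route. The paper first shows, by adjusting \cite[Lemma 2.5]{ORfp}, that the cone $(-)\star\Delta[0]$ is left Quillen on $m\sset_{\mathrm{cmp,sat}}$; this amounts to your Leibniz-join statement for the generating anodyne maps. It then deduces the claim for $\Sigma$, by adjusting \cite[Lemma 2.7]{ORfp}, through exactly your observation that $\Sigma A\to\Sigma B$ is a pushout of $(A\star\Delta[0])\amalg_A B\to B\star\Delta[0]$. One cosmetic point: no further collapse is needed after that pushout, because pushing this map out along $(A\star\Delta[0])\amalg_A B\to\Sigma A$ already yields $\Sigma B$.
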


\begin{proof}
The proof of \cite[Lemma 2.5]{ORfp} can be adjusted to conclude that the cone functor
\[
(-)\star\Delta[0]\colon m\sset_{\mathrm{cmp,sat}}
\to
{}^{\Delta[0]/}m\sset_{\mathrm{cmp,sat}}
\]
is left Quillen. Using this, the argument of \cite[Lemma 2.7]{ORfp} can be adjusted to conclude that the suspension functor
\[
\Sigma\colon m\sset_{\mathrm{cmp,sat}}
\to
{}^{\partial\Delta[1]/}m\sset_{\mathrm{cmp,sat}},
\]
is left Quillen, as desired.
\end{proof}

\begin{prop}
\label{IteratedQuillenAdjunctionMarked}
For $\ell>0$, the Quillen adjunction from \cref{SuspensionQuillenMarked} induces a Quillen adjunction
\[\Sigma^{\ell}\colon m\sset_{\mathrm{cmp,sat}}\rightleftarrows{}^{\Sigma^{\ell}\varnothing/}m\sset_{\mathrm{cmp,sat}}\noloc(-)^{(\ell)}(-)\]
where ${}^{\Sigma^{\ell}\varnothing/}m\sset_{\mathrm{cmp,sat}}$ denotes the model category of simplicial sets under $\Sigma^{\ell}\varnothing$ from \cite{HirschhornOvercategories}.
\end{prop}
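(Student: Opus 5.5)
This is a direct instance of \cref{IteratedQuillenAdjunction}, in exact parallel with the proof of \cref{IteratedSuspensionQuillenCat}. We take $\cM=m\sset_{\mathrm{cmp,sat}}$, $c_0=\varnothing$ (the initial marked simplicial set), $F=\Sigma$ and $G=(-)(-,-)$. The required input is a Quillen adjunction
\[
\Sigma\colon m\sset_{\mathrm{cmp,sat}}\simeq{}^{\varnothing/}m\sset_{\mathrm{cmp,sat}}\rightleftarrows{}^{\Sigma\varnothing/}m\sset_{\mathrm{cmp,sat}}\noloc(-)(-,-).
\]
This is provided by \cref{SuspensionQuillenMarked}, once we observe that $\Sigma\varnothing=(\varnothing\star\Delta[0])/\varnothing\cong\Delta[0]\amalg\Delta[0]\cong\partial\Delta[1]$.

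To make this observation precise, we note that the join with the empty marked simplicial set is the identity. Hence $\varnothing\star\Delta[0]=\Delta[0]$, and the quotient by $\varnothing$ freely adjoins a disjoint basepoint. This is consistent with the description of $\Sigma$ as landing in ${}^{\partial\Delta[1]/}m\sset$ from \cref{SuspensionMarked}. We must also make sure that the model structures on the two sides agree: the undercategory model structure of \cite{HirschhornOvercategories} is determined by the underlying object, so the identification $\Sigma\varnothing\cong\partial\Delta[1]$ induces an isomorphism of model categories ${}^{\Sigma\varnothing/}m\sset_{\mathrm{cmp,sat}}\cong{}^{\partial\Delta[1]/}m\sset_{\mathrm{cmp,sat}}$.

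With the hypothesis verified, \cref{IteratedQuillenAdjunction} yields for every $\ell\geq0$, and in particular every $\ell>0$, a Quillen adjunction
\[
\Sigma^\ell\colon m\sset_{\mathrm{cmp,sat}}\rightleftarrows{}^{\Sigma^\ell\varnothing/}m\sset_{\mathrm{cmp,sat}}\noloc G^\ell.
\]
It remains to identify $G^\ell$ with $(-)^{(\ell)}(-)$. Both are right adjoints to the same functor $\Sigma^\ell$, and both arise from the same construction in \cref{IteratedAdjunction}, as used in \cref{IteratedSuspensionAdjunctionMarked}. Therefore they agree up to natural isomorphism.

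The only point that needs care, and hence the main obstacle, is the inductive step inside \cref{IteratedQuillenAdjunction}. There one uses that slicing a Quillen adjunction under an object produces a Quillen adjunction, and that iterated undercategories ${}^{F^{\ell+1}(c_0)/}({}^{F(c_0)/}\cM)$ and ${}^{F^{\ell+1}(c_0)/}\cM$ carry compatible induced model structures. Since both the cofibrations and the weak equivalences in undercategories are created by the forgetful functors, this compatibility holds. The lemma is already stated in this generality, so we may take it as given.
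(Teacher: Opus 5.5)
Your proposal is correct and is exactly the paper's argument: the paper also just invokes \cref{IteratedQuillenAdjunction} with $\cM=m\sset_{\mathrm{cmp,sat}}$, $c_0=\varnothing$, $F=\Sigma$ and $G$ the hom functor, taking \cref{SuspensionQuillenMarked} as the required input. Your explicit check that $\Sigma\varnothing\cong\partial\Delta[1]$, so that the lemma's hypothesis literally matches \cref{SuspensionQuillenMarked}, is correct; the paper leaves this detail implicit.
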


\begin{proof}
This is an instance of \cref{IteratedQuillenAdjunction}, applied to
$\cC=m\sset_{\mathrm{cmp,sat}}$, $c_0=\varnothing$, $F=\Sigma$, and $G=(-)^{(-)}(-)$,
using the suspension-hom Quillen adjunction of \cref{SuspensionQuillenMarked}.
\end{proof}

\subsection{Properties of nerves of contractible gaunt categories}

\label{NerveOfContractibleGaunt}

We establish as \cref{NEnotContractible,NEfibrant} two properties of the Roberts--Street nerve of gaunt $\omega$-categories.

\begin{rmk}
\label{RmkNaturalRS}
If $\cD$ is a gaunt $k$-category for some $k\geq0$, then
$N^{\mathrm{RS}}\cD=N^\natural\cD$.
\end{rmk}

\begin{prop}
\label{NEfibrant}
If $\cG$ is a 
gaunt $\omega$-category, the unique map of marked simplicial sets $N^{\mathrm{RS}}\cG\to\Delta[0]$ is a fibration in $m\sset_{\mathrm{cmp,sat}}$.
\end{prop}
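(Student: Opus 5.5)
We need to show that $N^{\mathrm{RS}}\cG$ is a saturated complicial set. Since $\Delta[0]$ is fibrant and in $m\sset_{\mathrm{cmp,sat}}$ (a left Bousfield localization of $m\sset_{\mathrm{cmp}}$, with all objects cofibrant) a map to the terminal object is a fibration exactly when its source is fibrant, it suffices to check three right lifting properties for $N^{\mathrm{RS}}\cG$: against the complicial horn extensions $\Lambda^k[m]\to\Delta^k[m]$, against the thinness extensions $\Delta^k[m]'\to\Delta^k[m]''$, and against the saturation extensions $\Delta[3]^{\mathrm{eq}}\star\Delta[\ell]\to\Delta[3]^\sharp\star\Delta[\ell]$ for $\ell\geq-1$. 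By the adjunction $c^{\mathrm{RS}}\dashv N^{\mathrm{RS}}$ of \cref{NerveAdjunctionMarked}, each of these becomes an extension problem in $\omega\cat$ along the image under $c^{\mathrm{RS}}$ of the corresponding map.

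For the first two families, we rely on the classical fact that the Roberts--Street nerve of \emph{any} strict $\omega$-category is a (strict) complicial set. This is Verity's theorem that $N^{\mathrm{RS}}$ lands in strict complicial sets, as recorded in \cite{VerityComplicialAMS}. Strict complicial sets lift uniquely against complicial horn extensions and thinness extensions, so these lifts exist for $N^{\mathrm{RS}}\cG$ without any hypothesis on $\cG$. Gauntness is therefore needed only for saturation.

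The saturation step is the main obstacle. I first treat $\ell=-1$. A map $\Delta[3]^{\mathrm{eq}}\to N^{\mathrm{RS}}\cG$ sends the edges $[0,2]$ and $[1,3]$ to identities, and the marked $2$- and $3$-simplices to identity cells. Unwinding the $2$-simplices $[0,1,2]$ and $[1,2,3]$, the $1$-cell $f=[1,2]$ acquires a left and a right inverse up to identity $2$-cells, that is, up to equalities. So $f$ is an honest isomorphism in $\mathrm{core}_1\cG$. Gauntness (\cref{defn:contr-gaunt}, lifting against $\cI\to[0]$) forces $f$ to be an identity. It follows that all edges are identities and the whole simplex degenerates onto identities, so every simplex is marked and the map extends to $\Delta[3]^\sharp$.

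For general $\ell\geq0$ I reduce to the previous case by suspension. The marked simplices of $\Delta[3]^{\mathrm{eq}}\star\Delta[\ell]$ not already marked in $\Delta[3]^\sharp\star\Delta[\ell]$ are of the form $\sigma\star\tau$ with $\sigma$ of dimension $0$ or a non-marked edge and $\tau$ nondegenerate. Because $\tau$ is nondegenerate, such a simplex has top cell living in a hom of an iterated hom. Concretely, I would use the quotient $\Delta[3]\star\Delta[\ell]\to\Sigma^{\ell+1}\Delta[3]$ of \cref{ProgressiveSuspensionsGeneral} together with \cref{NvsIteratedHom} and \cref{E:hom}, which identify the relevant data with a $\Delta[3]^{\mathrm{eq}}$-shaped configuration in $N^{\mathrm{RS}}$ of an iterated hom $\omega$-category of $\cG$. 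That iterated hom is again gaunt by \cref{E:hom}. The $\ell=-1$ argument then shows that the relevant cell is an identity.

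Alternatively, one can argue directly: the top cell of the $(4+\ell)$-simplex is an $(\ell+2)$-cell that is invertible up to identities in the appropriate sense. By strictness it is then strictly invertible, so gauntness makes it an identity, and the same holds for all the faces that need to be marked. I expect the bookkeeping that identifies which cells must become identities, and that checks that invertibility holds strictly and not merely up to coherence, to be the hardest part.
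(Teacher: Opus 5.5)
Your reduction to the three lifting properties, your use of Verity's theorem for the complicial horn and thinness extensions, and your argument for $\ell=-1$ are all correct. In the $\ell=-1$ case, the thin $2$-simplices $[0,1,2]$ and $[1,2,3]$ give the strict equations $f_{12}f_{01}=\id$ and $f_{23}f_{12}=\id$. Gauntness then forces $f_{12}$, and hence every edge, to be an identity.

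\textbf{The gap is the case $\ell\geq0$.} A map $\Delta[3]^{\mathrm{eq}}\star\Delta[\ell]\to N^{\mathrm{RS}}\cG$ does not in general factor through the quotient $\Delta[3]\star\Delta[\ell]\to\Sigma^{\ell+1}\Delta[3]$ of \cref{ProgressiveSuspensionsGeneral}. That quotient collapses the whole face spanned by all vertices except the last one to the point $\bot$. For $\ell\geq1$ this face contains the edges from $\Delta[3]$ to the first vertex of $\Delta[\ell]$, and these need not be identities. So there is nothing to transpose along \cref{NvsIteratedHom}, and no $\Delta[3]^{\mathrm{eq}}$-configuration in an iterated hom is actually produced.

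Inducting on $\ell$ repairs this only partially.
\begin{itemize}
\item For $\ell=0$ the repair works. The case $\ell=-1$ shows that $\Delta[3]$ is constant at an object $a$. The map then factors through $\Sigma\Delta[3]^{\mathrm{eq}}$, and \cref{E:hom} applies in $\cG(a,x_4)$.
\item For $\ell=1$ it fails. Take $\cG=[1]$, send the vertices $0,1,2,3$ to $0$ and the vertices $4,5$ to $1$. This is a valid marked map, but the face $\Delta[3]\star\{4\}$ carries the non-identity $1$-cell $g\colon 0\to1$. So the map does not factor through $\Sigma^2\Delta[3]$.
\end{itemize}
To go further you would need a genuinely new ingredient. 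One option is join--slice adjunctions together with gauntness of slice $\omega$-categories. Another is a result that saturation of a complicial set can be tested against the maps $\Sigma^\ell\Delta[3]^{\mathrm{eq}}\to\Sigma^\ell\Delta[3]^\sharp$. A third is the full oriental bookkeeping of your ``direct'' alternative: an induction on $\dim\tau$ showing that the top cells of the simplices $\sigma\star\tau$ lie in a single iterated hom and satisfy the strict composition identities coming from the thin simplices $[i,j,k]\star\tau$.

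None of these is supplied. As written, the direct alternative is not an argument: the top cell of the $(4+\ell)$-simplex is a $(4+\ell)$-cell, and it is already an identity. Note also that the simplices that must become thin are exactly the $\sigma\star\tau$ with $\sigma\in\{[0,1],[1,2],[2,3],[0,3]\}$. Those with $\dim\sigma=0$ are not marked in $\Delta[3]^\sharp\star\Delta[\ell]$ and are generally not thin.

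\textbf{How the paper avoids this.} Any map $\Delta[3]^{\mathrm{eq}}\star\Delta[\ell]\to N^{\mathrm{RS}}\cG$ factors through $N^{\mathrm{RS}}\mathrm{core}_{4+\ell}\cG$. Since $\mathrm{core}_{4+\ell}\cG$ is a gaunt $(4+\ell)$-category, its Roberts--Street nerve equals Loubaton's natural nerve $N^\natural$, which is saturated by \cite[Th\'eor\`eme~5.22]{Loubaton1}. The passage to a finite core is essential here. In a gaunt $\omega$-category such as $\cE$, coinductively invertible cells need not be identities, so $N^{\mathrm{RS}}$ and $N^\natural$ differ.
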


\begin{proof}
The fact that $N^{\mathrm{RS}}\cG\to\Delta[0]$ is a fibration in $m\sset_{\mathrm{cmp}}$ follows from \cite[Theorem~266]{VerityComplicialAMS}, and we now show that it is a fibration in $m\sset_{\mathrm{cmp,sat}}$ by showing it has the right lifting property with respect to the saturation extension
\[
\Delta[3]^\mathrm{eq}\star\Delta[\ell]\to\Delta[3]^\sharp\star\Delta[\ell]
\]
for all $\ell\geq-1$, concluding the proof.

To this end, consider a map of marked simplicial sets
\[\Delta[3]^\mathrm{eq}\star\Delta[\ell]\to N^{\mathrm{RS}}\cG.\]
For dimension reasons, it needs to factor through
\[\Delta[3]^\mathrm{eq}\star\Delta[\ell]\to \mathrm{sk}_{3+1+\ell}N^{\mathrm{RS}}\cG,\]
where $\mathrm{sk}_{3+1+\ell}N^{\mathrm{RS}}\cG$ denotes the usual simplicial $(3+1+\ell)$-skeleton with the marking inherited from $N^{\mathrm{RS}}\cG$. By definition of core in \cref{TruncationsCat} and dimension reasons, it needs to factor through
\[\Delta[3]^\mathrm{eq}\star\Delta[\ell]\to\mathrm{sk}_{3+1+\ell}N^{\mathrm{RS}}\mathrm{core}_{3+1+\ell}\cG.\]
Since $\cG$ is a gaunt $\omega$-category, its core $\mathrm{core}_{3+1+\ell}\cG$ is a gaunt
$(3+1+\ell)$-category. In particular, using \cref{RmkNaturalRS}, it factors through
\[
\Delta[3]^\mathrm{eq}\star\Delta[\ell]\to N^{\mathrm{RS}}\mathrm{core}_{3+1+\ell}\cG=N^{\natural}\mathrm{core}_{3+1+\ell}\cG.\]
By \cite[Théorème 5.22]{Loubaton1},
the marked simplicial set $N^{\natural}\mathrm{core}_{3+1+\ell}\cG$ is saturated, so using again \cref{RmkNaturalRS} we see that the map of marked simplicial sets can be lifted to
\[
\Delta[3]^\sharp\star\Delta[\ell]\to N^{\natural}\mathrm{core}_{3+1+\ell}\cG=N^{\mathrm{RS}}\mathrm{core}_{3+1+\ell}\cG.\]
By postcomposing appropriately, we then obtain a map of marked simplicial sets
\[
\Delta[3]^\sharp\star\Delta[\ell]\to N^{\mathrm{RS}}\cG,\]
as desired.
\end{proof}

\begin{prop}
\label{NEnotContractible}
Given a gaunt $\omega$-category $\cG$, if the map of marked simplicial sets $N^{\mathrm{RS}}\cG\to\Delta[0]$ is a weak equivalence
in $m\sset_{\mathrm{cmp,sat}}$,
then $\cG$ is trivial.
\end{prop}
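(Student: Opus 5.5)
The plan is to upgrade the hypothesis to an acyclic fibration and then read off the structure of $\cG$ from lifting properties against specific monomorphisms. By \cref{NEfibrant}, since $\cG$ is gaunt, the map $N^{\mathrm{RS}}\cG\to\Delta[0]$ is a fibration in $m\sset_{\mathrm{cmp,sat}}$. Combined with the assumption that it is a weak equivalence, it is an acyclic fibration. Since the cofibrations of $m\sset_{\mathrm{cmp,sat}}$ are the monomorphisms (\cref{ModelStructuresMarked}), $N^{\mathrm{RS}}\cG$ therefore has the right lifting property with respect to every monomorphism of marked simplicial sets. Gauntness is used only at this step.

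I would use three families of monomorphisms.
\begin{enumerate}
\item Lifting against $\varnothing\hookrightarrow\Delta[0]$ shows that $\cG$ has at least one object.
\item Lifting against $\partial\Delta[1]\hookrightarrow\Delta[1]$ shows that any two objects $x,y$ of $\cG$ are joined by a $1$-simplex, that is, a $1$-cell $x\to y$.
\item Lifting against the entire map $\Delta[k]\hookrightarrow\Delta[k]_t$ for $k>0$, which is a monomorphism, shows that every $k$-simplex of $N^{\mathrm{RS}}\cG$ is marked. By the definition of the Roberts--Street marking, this means that for every $\omega$-functor $\cO[k]\to\cG$ the image of the top generating $k$-cell of $\cO[k]$ is an identity.
\end{enumerate}

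It remains to see that every $k$-cell $f$ of $\cG$, for $k>0$, arises as the top cell of some $k$-simplex. For this I would use the canonical $\omega$-functor $\cO[k]\to\cC[k]$. It sends the top generator to the generating $k$-cell and the lower-dimensional generators to the appropriate sources and targets of that cell, or to their identities. Precomposing the classifying map $\cC[k]\to\cG$ of $f$ with it gives a $k$-simplex of $N\cG$ whose top cell is $f$. By (3), $f$ is an identity. Checking that this functor exists and has the stated effect on the top generator is the only point needing care; it is the standard collapse map, equivalently the map $c\Delta[k]\to c\Delta[k]/\partial$ onto the $k$-cell.

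Consequently all cells of positive dimension in $\cG$ are identities. By (2), any two objects are then connected by an identity $1$-cell and so coincide. Together with (1), $\cG$ has exactly one object and only identity cells, so $\cG\cong[0]$ is trivial.
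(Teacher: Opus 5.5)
Your proposal is correct and follows essentially the same route as the paper. You upgrade to an acyclic fibration via \cref{NEfibrant}, then lift against $\varnothing\hookrightarrow\Delta[0]$, $\partial\Delta[1]\hookrightarrow\Delta[1]$, and entire monomorphisms that force every simplex to be marked; the only difference is that the paper uses the single entire map $N^{\mathrm{RS}}\cG\hookrightarrow\mathrm{th}_0N\cG$ where you use $\Delta[k]\hookrightarrow\Delta[k]_t$ for each $k>0$.

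You also make explicit a step the paper leaves implicit: that every $k$-cell is the top cell of some $k$-simplex. Your parenthetical ``$c\Delta[k]\to c\Delta[k]/\partial$'' is misleading, since collapsing the whole boundary of $\cO[k]$ yields $\cC[k]/\partial\cC[k]$ rather than $\cC[k]$. A correct justification is the quotient $\Delta[k]\to\Sigma^k\Delta[0]$ of \cref{ProgressiveSuspensionsGeneral}, combined with $c^{\mathrm{RS}}\Sigma^k\Delta[0]\cong\Sigma^k\cC[0]\cong\cC[k]$ from \cref{CvsIteratedSigma}.
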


\begin{proof}
First, the unique map of marked simplicial sets $f\colon N^{\mathrm{RS}}\cG\to\Delta[0]$ is a weak equivalence in $m\set_{\textrm{cmp,sat}}$ by assumption and a fibration by \cref{NEfibrant}. Hence, it is a trivial fibration in $m\set_{\textrm{cmp,sat}}$, and the identity map defines a cofibration $\id_{N\cG}\colon N^{\mathrm{RS}}\cG\to\mathrm{th}_0N\cG$.
Hence, there is a solution to the lifting problem in $m\sset$:
\[
\begin{tikzcd}
N^{\mathrm{RS}}\cG\arrow[r,"\id_{N\cG}"]\arrow[d,"\id_{N\cG}"swap,hook]&N^{\mathrm{RS}}\cG\arrow[d,"\simeq", two heads]\\
\mathrm{th}_0N\cG\arrow[r]\arrow[ru,dashed]&\Delta[0]   
\end{tikzcd}
\]
It follows that every simplex in $N^{\mathrm{RS}}\cG$ is marked, meaning that every morphism in $\cG$ is an identity.

Next, if $x$ and $y$ are objects of $\cG$,
there is a solution to the lifting problem in $m\sset$:
\[
\begin{tikzcd}
\partial\Delta[1]\arrow[r,"{(x,y)}"]\arrow[d,hook]&N^{\mathrm{RS}}\cG\arrow[d,"\simeq", two heads]\\
\Delta[1]\arrow[r]\arrow[ru,dashed,"h"]&\Delta[0]
\end{tikzcd}
\]
It follows that there is a morphism $h\colon x\to y$, 
meaning that $x=y$, so $\cG$ has at most one object.

Finally, there is a solution to the lifting problem in $m\sset$:
\[
\begin{tikzcd}
\varnothing\arrow[r]\arrow[d,hook]&N^{\mathrm{RS}}\cG\arrow[d,"\simeq", two heads]\\
\Delta[0]\arrow[r]\arrow[ru,dashed]&\Delta[0]
\end{tikzcd}
\]
so $\cG$ has at least one object.

In total,
we showed that $\cG$ has a unique object and
a unique morphism in each positive dimension.
Hence, $\cG$ is a trivial $\omega$-category, as desired.
\end{proof}

\subsection{Thinifications of Roberts--Street nerve of gaunt contractible categories}

\label{thNerveContractibleGaunt}

The goal is to prove \cref{LnEcontractible}, which establishes the contractibility of $\mathrm{th}_n N^{\mathrm{RS}}\cE$ in the homotopy theory of $(\infty,n)$-categories.

\begin{lem}
\label{NPcontractibleKan}
For any contractible $\omega$-category $\cP$, the nerve $N\cP$ is a contractible Kan complex.
\end{lem}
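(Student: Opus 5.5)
The plan is to show directly that the map of simplicial sets $N\cP\to\Delta[0]$ has the right lifting property with respect to all boundary inclusions $\partial\Delta[m]\hookrightarrow\Delta[m]$ for $m\geq0$. Such a map is a trivial Kan fibration, so $N\cP$ is both a Kan complex and contractible. By the nerve adjunction $c\dashv N$ of \cref{NerveAdjunction}, this lifting problem transposes to extending a given $\omega$-functor $c\,\partial\Delta[m]\to\cP$ along $c\,\partial\Delta[m]\to c\Delta[m]\cong\cO[m]$. So it suffices to prove that this map has the left lifting property against $\cP\to[0]$.

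\Cref{defn:contr-gaunt} says that $\cP\to[0]$ has the right lifting property against every boundary inclusion $\partial\cC[k]\hookrightarrow\cC[k]$. Hence it has the right lifting property against every cofibration of $\omega\cat_{\mathrm{can}}$, that is, against every relative cell complex built from these generating maps (\cref{ModelStructuresCat}). The key step is therefore to exhibit $c\,\partial\Delta[m]\to\cO[m]$ as such a relative cell complex.

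I would use Steiner theory here. The oriental $\cO[m]$ is a polygraph with exactly one generator in each face of $\Delta[m]$, and its top generator is the unique $m$-dimensional one. The left adjoint $c$ preserves colimits, and $\partial\Delta[m]$ is the colimit of its nondegenerate faces. So $c\,\partial\Delta[m]$ is the colimit of the orientals $\cO[j]$ of the proper faces, glued along common subfaces, and each inclusion $\cO[j]\hookrightarrow\cO[m]$ of a face is a sub-polygraph. I would argue, for instance by citing Street's description of orientals or the Steiner-complex description of $c$ on sub-simplicial sets of $\Delta[m]$, that this colimit is exactly the sub-polygraph of $\cO[m]$ generated by all generators except the top one. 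The map $c\,\partial\Delta[m]\to\cO[m]$ is then a single pushout of $\partial\cC[m]\hookrightarrow\cC[m]$ attaching the top generator, and in particular a cofibration.

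This identification of $c\,\partial\Delta[m]$ with the sub-polygraph is the main obstacle, because colimits of $\omega$-categories can in principle create new composites or identifications. If the direct identification proves awkward, I would use a weaker statement that suffices: the map $c\,\partial\Delta[m]\to\cO[m]$ is a cofibration. One way to get this is from the fact that $c$ sends monomorphisms to cofibrations, which follows from $c\partial\Delta[m]\to c\Delta[m]$ being a cofibration, proved by induction on $m$ using the cell structure. Alternatively, it follows from the standard fact that $c\dashv N$ is a Quillen pair from the Kan–Quillen model structure, whose right adjoint preserves trivial fibrations. Given that the map is a cofibration, lifting against $\cP\to[0]$ follows, and this completes the proof.
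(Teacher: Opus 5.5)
Your main line is the paper's proof. You transpose along $c\dashv N$, identify $c\,\partial\Delta[m]\to c\Delta[m]$ with the inclusion $\partial\cO[m]\hookrightarrow\cO[m]$ that attaches only the top generator, and fill that single $m$-cell using contractibility of $\cP$. The paper phrases the last step via \cref{E:Parallel}, which is the same as lifting against $\partial\cC[m]\hookrightarrow\cC[m]$. It also takes $c\,\partial\Delta[m]\cong\partial\cO[m]$ for granted, exactly as you propose to cite it from Street or Steiner.

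Do not rely on your fallbacks, though.

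\begin{itemize}
\item The claim that $c\dashv N$ is a Quillen pair from the Kan--Quillen model structure to $\omega\cat_{\mathrm{can}}$ is false. Every object of $\omega\cat_{\mathrm{can}}$ is fibrant, yet $N\cC[1]\cong\Delta[1]$ is not a Kan complex, so $N$ cannot be right Quillen here.
\item The first fallback is circular as written. To deduce that $c$ sends monomorphisms to cofibrations, you already need $c\,\partial\Delta[m]\to c\Delta[m]$ to be a cofibration, which is the very point at issue.
\end{itemize}

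The genuine input is the description of $c\,\partial\Delta[m]$ as the sub-polygraph of $\cO[m]$ missing the top generator. Cite it as such.
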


\begin{proof}
A lifting problem of the form
\[
\begin{tikzcd}
\partial\Delta[n]\arrow[r]\arrow[d]&N\cP\\
\Delta[n]\arrow[ru,dashed, "?"swap]
\end{tikzcd}
\]
corresponds to a lifting problem
\[
  \begin{tikzcd}
    \partial\cO[n]\arrow[r]\arrow[d]&\cP\\
    \cO[n]\arrow[ru,dashed, "?"swap]
  \end{tikzcd}
\]
Since $\partial\cO[n]\subseteq\cO[n]$ is a relative polygraph
obtained by attaching a single $n$-cell,
the solution of such a lifting problem
amounts to specifying a single $n$-cell in $\cP$
with prescribed $(n-1)$-dimensional source and boundary.
This is always possible by \cref{E:Parallel}.
\end{proof}

\begin{prop}
\label{InductionBase}
For any contractible $\omega$-category $\cP$, the canonical map
\[
\mathrm{th}_1N^{\mathrm{RS}}\cP\hookrightarrow \mathrm{th}_0N^{\mathrm{RS}}\cP=N^\sharp\cP
\]
is an acyclic cofibration in $m\sset_{\mathrm{cmp,sat}}$.
\end{prop}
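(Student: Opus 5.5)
The approach is to realize the map $\mathrm{th}_1N^{\mathrm{RS}}\cP\hookrightarrow N^\sharp\cP$ as a single pushout of a coproduct of saturation extensions $\Delta[3]^{\mathrm{eq}}\to\Delta[3]^\sharp$ (the case $\ell=-1$ in \cref{ModelStructuresMarked}). These are acyclic cofibrations in $m\sset_{\mathrm{cmp,sat}}$, and acyclic cofibrations are stable under coproducts and pushouts. That the map is a cofibration is clear, since it is the identity on underlying simplicial sets and hence a monomorphism. The key observation is that both sides already agree in dimensions $\geq2$: all simplices there are marked in $\mathrm{th}_1$, and no $0$-simplices are ever marked. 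So the map is entire, and the only difference is that $N^\sharp\cP$ additionally marks every non-degenerate $1$-simplex of $N\cP$.

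First, for each non-degenerate $1$-simplex $f\colon x\to y$ of $N\cP$, I build a map $\sigma_f\colon\Delta[3]^{\mathrm{eq}}\to\mathrm{th}_1N^{\mathrm{RS}}\cP$ whose edge $[1,2]$ is $f$. I prescribe the $1$-skeleton on the vertices $0,1,2,3\mapsto y,x,y,x$ as follows:
\begin{itemize}
\item $[1,2]=f$;
\item $[0,2]=\id_y$ and $[1,3]=\id_x$, both degenerate;
\item $[0,1]$, $[2,3]$ and $[0,3]$ equal to some chosen $1$-simplex $g\colon y\to x$.
\end{itemize}
Such a $g$ exists by \cref{E:Parallel}, since $\cP$ is contractible. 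By \cref{NPcontractibleKan}, $N\cP$ is a contractible Kan complex, so this map $\mathrm{sk}_1\Delta[3]\to N\cP$ extends to a $3$-simplex; this is just iterated lifting against $\partial\Delta[m]\hookrightarrow\Delta[m]$. The extension respects markings. The only marked edges of $\Delta[3]^{\mathrm{eq}}$ besides degenerate ones are $[0,2]$ and $[1,3]$, which go to degenerate edges. All simplices of dimension $\geq2$ land in simplices that are marked in $\mathrm{th}_1$.

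Next, I form the pushout of $\coprod_f\Delta[3]^{\mathrm{eq}}\to\coprod_f\Delta[3]^\sharp$ along $\coprod_f\sigma_f$, taken over all non-degenerate $1$-simplices $f$. Since the saturation extension is entire, the pushout in $m\sset$ has the same underlying simplicial set $N\cP$. Its marking is generated by the old marking together with the images of all simplices of $\Delta[3]^\sharp$. In particular every $f$, being the image of the edge $[1,2]$ of $\sigma_f$, becomes marked, and nothing beyond positive-dimensional simplices gets marked. Hence the pushout is exactly $N^\sharp\cP=\mathrm{th}_0N^{\mathrm{RS}}\cP$, with the pushout leg being the canonical map, which proves the claim.

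The step I expect to need the most care is this identification of the pushout with $N^\sharp\cP$. It requires checking that pushouts in $m\sset$ along entire maps are computed by taking the same underlying simplicial set and the union of the image markings, which follows from the description of marked simplicial sets as a reflective subcategory in \cref{IsoOnReps}. By contrast, the existence of the $3$-simplices $\sigma_f$ is routine given \cref{NPcontractibleKan}.
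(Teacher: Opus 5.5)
Your proposal is correct and follows essentially the same route as the paper. For each $1$-simplex you use that $N\cP$ is a contractible Kan complex to build a $3$-simplex whose edge $[1,2]$ is that $1$-simplex and whose diagonals $[0,2],[1,3]$ are degenerate, and you then exhibit $\mathrm{th}_1N^{\mathrm{RS}}\cP\hookrightarrow N^\sharp\cP$ as a single pushout of a coproduct of saturation extensions $\Delta[3]^{\mathrm{eq}}\to\Delta[3]^\sharp$. The remaining differences are cosmetic: you fix the $1$-skeleton explicitly and fill skeletally rather than extending from $\Delta[1]$ over the quotient of $\Delta[3]$ collapsing the diagonals, and you index only over non-degenerate $1$-simplices. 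Your explicit computation of the pushout along an entire map spells out a step the paper leaves implicit.
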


\begin{proof}
Given any $1$-simplex $\alpha$ of $N\cP$ represented by a map $\widetilde\alpha\colon\Delta[1]\to N\cP$, we construct a map
$\widetilde\alpha\colon\Delta[3]\to N\cP$
as follows.

If $\Delta[0]\amalg_{\Delta[1]}\Delta[3]\amalg_{\Delta[1]}\Delta[0]$ denotes the $3$-simplex with degenerate diagonals, there exist extensions $\widetilde\alpha$ and $\overline\alpha$ of $\alpha$ which fit into the following commutative diagram:
 \[
\begin{tikzcd}
\Delta[1]\arrow[rr,"\alpha"]\arrow[rd,hook,"{[1,2]}"]\arrow[dd,"{[1,2]}"swap]&&N\cP\\ 
&\Delta[0]\amalg_{\Delta[1]}\Delta[3]\amalg_{\Delta[1]}\Delta[0]\arrow[ru,dotted,"\overline\alpha"]&\\
\Delta[3]\arrow[ru]\arrow[rruu,dashed,"\widetilde\alpha",bend right,swap]
\end{tikzcd}
\]
Here, the dotted arrow $\overline\alpha$ exists by \cref{NPcontractibleKan}.
In particular, we obtained a map of simplicial sets
\[\widetilde\alpha\colon\Delta[3]\to N\cP,\]
which fits into the following commutative diagram of simplicial sets
\[
\begin{tikzcd}
\Delta[1]\arrow[d,"\alpha"swap]\arrow[r,"{[1,2]}"]&\Delta[3]\arrow[d,"\widetilde\alpha"]\\
N\cP\arrow[r,equal]&N\cP.    
\end{tikzcd}
\]
By construction, the map $\widetilde\alpha$ defines maps of marked simplicial sets
\[
\widetilde\alpha\colon\Delta[3]^{\mathrm{eq}}\to \mathrm{th}_1N^{\mathrm{RS}}\cP\quad\text{ and }\quad\widetilde\alpha\colon\Delta[3]^{\sharp}\to N^{\sharp}\cP,
\]
which fit into commutative diagram
commutative diagram
\[
\begin{tikzcd}
\Delta[1]\arrow[d,"{[1,2]}"swap]\arrow[r]\arrow[dd,bend right=70,"\alpha" swap]&\Delta[1]_t\arrow[d,"{[1,2]}"]\arrow[dd,bend left=70,"\alpha"]\\
\Delta[3]^{\mathrm{eq}}\arrow[r]\arrow[d,"\widetilde\alpha"swap]&\Delta[3]^\sharp\arrow[d,"\widetilde\alpha"]\\
\mathrm{th}_1N^{\mathrm{RS}}\cP\arrow[r]&\mathrm{th}_0N\cP.
\end{tikzcd}
\]
Indeed, the simplices $\widetilde\alpha_{[0,2]}$ and $\widetilde\alpha_{[1,3]}$ are degenerate by construction, and $\widetilde\alpha$, $\widetilde\alpha_{[0,1,3]}$, $\widetilde\alpha_{[1,2,3]}$, $\widetilde\alpha_{[0,1,2]}$ and $\widetilde\alpha_{[0,2,3]}$ have dimension higher than $1$.
Moreover, the lower square can be used to build a pushout of marked simplicial sets
\[
\begin{tikzcd}
\coprod_{\alpha}\Delta[3]^{\mathrm{eq}}\arrow[r]\arrow[d,"\coprod_{\alpha}\widetilde\alpha"swap]&\coprod_{\alpha}\Delta[3]^\sharp\arrow[d,"\coprod_{\alpha}\widetilde\alpha"]\\
\mathrm{th}_1N^{\mathrm{RS}}\cP\arrow[r]&N^\sharp\cP
\end{tikzcd}
\]
indexed over all $1$-simplices $\alpha$ of $N\cP$.
Since the top map is an acyclic cofibration in $m\sset_{\mathrm{cmp,sat}}$, then so is the bottom one, as desired.
\end{proof}

\begin{thm}
\label{theorem}
For every contractible
$\omega$-category $\cP$ and for all $\ell\geq0$ the canonical map
\[
\mathrm{th}_{\ell+1}N^{\mathrm{RS}}\cP\hookrightarrow \mathrm{th}_{\ell}N^{\mathrm{RS}}\cP
\]
is a weak equivalence in
$m\sset_{\mathrm{cmp,sat}}$.
\end{thm}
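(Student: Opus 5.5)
We argue by induction on $\ell\geq0$, reducing the step $\ell\mapsto\ell+1$ to the case $\ell$ applied to hom-$\omega$-categories. The base case $\ell=0$ is exactly \cref{InductionBase}. For $\ell>0$, the map $\mathrm{th}_{\ell+1}N^{\mathrm{RS}}\cP\hookrightarrow\mathrm{th}_\ell N^{\mathrm{RS}}\cP$ is entire. It is obtained by marking the unmarked $(\ell+1)$-simplices of $N\cP$. We mimic \cref{InductionBase} one dimension up. For each non-marked $(\ell+1)$-simplex $\alpha$, we build a simplex $\widetilde\alpha\colon\Delta[3]\star\Delta[\ell-1]\to N\cP$ (or more naturally a suspended version). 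It restricts to $\alpha$ along a face inclusion $\Delta[1]\star\Delta[\ell-1]\cong\Delta[\ell+1]\hookrightarrow\Delta[3]\star\Delta[\ell-1]$. Its faces $[0,2]\star\Delta[\ell-1]$ and $[1,3]\star\Delta[\ell-1]$ are marked in $\mathrm{th}_{\ell+1}N^{\mathrm{RS}}\cP$, for instance degenerate, or of dimension $>\ell+1$.

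\textbf{Existence via homs.} The cleanest way to produce $\widetilde\alpha$ uses the suspension/hom machinery. By \cref{SurvivingNonDegenerate} and \cref{ProgressiveSuspensionsGeneral}, the saturation extension $\Delta[3]^{\mathrm{eq}}\star\Delta[\ell-1]\to\Delta[3]^\sharp\star\Delta[\ell-1]$ maps to the $\ell$-fold suspension $\Sigma^\ell\Delta[3]^{\mathrm{eq}}\to\Sigma^\ell\Delta[3]^\sharp$. The top-dimensional data of an $(\ell+1)$-simplex $\alpha$ is encoded, via \cref{GlobularBoundary}, by a $1$-simplex of the iterated hom $(N^{\mathrm{RS}}\cP)^{(\ell)}(\glboundary\alpha)$. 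By \cref{NvsIteratedHom}, this hom is $N^{\mathrm{RS}}(\cP^{(\ell)}(\delta))$, which is the nerve of a contractible $\omega$-category by iterating \cref{E:hom}. Applying the construction in the proof of \cref{InductionBase} to this hom gives a $3$-simplex with degenerate diagonals extending the $1$-simplex. Transposing back through the adjunction of \cref{IteratedSuspensionAdjunctionMarked} yields a map $\Sigma^\ell\Delta[3]\to N\cP$, which we precompose with the quotient map. Thinness bookkeeping via \cref{IteratedSuspensionMarked} shows that this gives compatible maps of marked simplicial sets
\[\Delta[3]^{\mathrm{eq}}\star\Delta[\ell-1]\to\mathrm{th}_{\ell+1}N^{\mathrm{RS}}\cP,\qquad \Delta[3]^{\sharp}\star\Delta[\ell-1]\to\mathrm{th}_\ell N^{\mathrm{RS}}\cP.\]
In the first, all simplices of dimension $>\ell+1$ are thin anyway. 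The faces $[0,2]\star\Delta[\ell-1]$ and $[1,3]\star\Delta[\ell-1]$ are of dimension $\ell+1$ and are identities, hence marked in the RS nerve. Faces of dimension $\leq\ell+1$ not containing all of $\Delta[\ell-1]$ are required marked only if they are marked in the source. Here I must check that they land in marked (degenerate or identity) simplices. This is the place where the join marking convention "or $\tau$ degenerate" matters.

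\textbf{Pushout and conclusion.} We then form the pushout of $\coprod_\alpha\Delta[3]^{\mathrm{eq}}\star\Delta[\ell-1]\to\coprod_\alpha\Delta[3]^\sharp\star\Delta[\ell-1]$ along $\coprod\widetilde\alpha$. The resulting marked simplicial set has the same underlying simplicial set as $\mathrm{th}_{\ell+1}N^{\mathrm{RS}}\cP$. It additionally marks each $\alpha$, together with possibly other simplices of $\widetilde\alpha$ of dimension $\leq\ell$. The latter are marked in $\mathrm{th}_\ell$ only if already marked, so we must arrange the construction so that the new marked simplices in dimension $\leq\ell$ are already marked. If they are not, we further compose with the entire acyclic map, since $\mathrm{th}_\ell$ preserves such maps by \cref{ThOfEntire}. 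The pushout therefore equals $\mathrm{th}_\ell N^{\mathrm{RS}}\cP$, and the map is a pushout of acyclic cofibrations of $m\sset_{\mathrm{cmp,sat}}$.

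\textbf{Main obstacle.} I expect the main obstacle to be this marking verification: showing that every face of $\Delta[3]^\sharp\star\Delta[\ell-1]$ of dimension $\leq\ell$ goes to a simplex already marked in $\mathrm{th}_{\ell+1}N^{\mathrm{RS}}\cP$. The $\Delta[3]^\sharp$ marking demands that faces $\sigma\star\tau$ with $\dim\sigma\geq1$ are marked. Low-dimensional such faces may not be identities in $\cP$. To handle this, I would first try choosing $\widetilde\alpha$ to factor through the quotient to $\Sigma^\ell\Delta[3]$. There \cref{SurvivingNonDegenerate} ensures that all faces with $\dim<\ell+1$ become degenerate except the type-(A) ones, which avoid $\sigma$ of positive dimension.
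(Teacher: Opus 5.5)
\textbf{There is a genuine gap: your construction only works for globular $(\ell+1)$-simplices.} It applies only to those $\alpha$ that factor through the quotient $\Delta[\ell+1]\to\Sigma^\ell\Delta[1]$, i.e.\ that encode an $(\ell+1)$-cell of $\cP$ with globular boundary. In the paper's proof these are the simplices of depth $\ell$.

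\begin{itemize}
\item Your step ``the top-dimensional data of $\alpha$ is encoded, via \cref{GlobularBoundary}, by a $1$-simplex of $(N^{\mathrm{RS}}\cP)^{(\ell)}(\glboundary\alpha)$'' presupposes such a factorization, since \cref{GlobularBoundary} starts from a map out of $\Sigma^\ell X$. A general $(\ell+1)$-simplex $\cO[\ell+1]\to\cP$ has non-identity lower-dimensional faces and admits no such transpose. Already for $\ell=1$, take a $2$-simplex whose three edges are non-identity $1$-morphisms.
\item Your proposed repair, choosing $\widetilde\alpha$ to factor through $\Sigma^\ell\Delta[3]$, cannot help. By naturality of the quotient, $\alpha$ is the face $[1,2]\star\Delta[\ell-1]$ of $\widetilde\alpha$, so it would itself factor through $\Sigma^\ell\Delta[1]$.
\item Without the suspension, the extension $\Delta[3]^{\mathrm{eq}}\star\Delta[\ell-1]\to\Delta[3]^\sharp\star\Delta[\ell-1]$ newly marks every face $[1,2]\star\tau$ with $\tau$ a proper (possibly empty) face of $\Delta[\ell-1]$. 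An example is the edge $[1,2]\star\varnothing=\alpha|_{\{0,1\}}$. These faces have dimension at most $\ell$, so for $\ell\geq1$ they are marked in $\mathrm{th}_\ell N^{\mathrm{RS}}\cP$ only if they are identities in $\cP$.
\item Consequently there is no map $\Delta[3]^\sharp\star\Delta[\ell-1]\to\mathrm{th}_\ell N^{\mathrm{RS}}\cP$ extending $\widetilde\alpha$. Appealing to \cref{ThOfEntire} does not supply any acyclic cofibration that would add those extra markings.
\end{itemize}
What your argument actually proves is only the first step $\mathrm{th}_{\ell+1}N^{\mathrm{RS}}\cP\hookrightarrow W^{(\ell,0)}_{\cP,\ell+1}$ of the paper's filtration.

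\textbf{The missing idea is a way to reduce an arbitrary $(\ell+1)$-simplex to a globular one.} The paper filters all $(\ell+1)$-simplices by depth $p$ and type $t$, meaning factorization through $\Sigma^p(\Delta[\ell+1-p]/\Delta[t])$. For $\gamma$ of depth $p$ and type $t\leq\ell-p-1$, it proceeds as follows.
\begin{enumerate}
\item It transposes $\gamma$ to an $(\ell+1-p)$-simplex $\beta$ of the $p$-fold hom, which is degenerate on an initial $\Delta[t]$.
\item It applies the Riehl--Verity complicial companion step (\cref{RVmovingLemma}) to get $\widetilde\beta$ with $d_{t+1}\widetilde\beta=\beta$, $d_t\widetilde\beta$ thin, and $d_{t+2}\widetilde\beta$ of type at least $t+1$.
\item It transposes back to $\widetilde\gamma$ and marks $\gamma$ by a pushout of the suspended complicial thinness extensions $\Sigma^p\Delta^{t+1}[\ell+2-p]'\to\Sigma^p\Delta^{t+1}[\ell+2-p]''$. \Cref{lemmaSuspension1,lemmaV2} handle the thinness bookkeeping.
\end{enumerate}

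Incidentally, the induction you announce but never use could legitimately dispose of all simplices of depth $\geq1$. Apply the inductive hypothesis to each $\cP(x,y)$, which is contractible by \cref{E:hom}, then suspend. Pushing out along the counits $\Sigma((N^{\mathrm{RS}}\cP)(x,y))\to N^{\mathrm{RS}}\cP$ (using \cref{NvsHom}) marks every $(\ell+1)$-simplex factoring through $\Delta[\ell+1]\to\Sigma\Delta[\ell]$. The depth-$0$ simplices would still require the companion/thinness-extension argument, which is absent from your proposal.
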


The proof will rely on a couple of auxiliary facts.

\begin{prop}
\label{RVmovingLemma}
Let $0\leq t\leq d-2$.
Let $A$ be a complicial set, and $\tau$ a $d$-simplex of $A$ whose restriction to the first copy of $\Delta[t]$ is degenerate at a vertex, which is represented by a map $\tau\colon\Delta[d]/\Delta[t]\to A$. Then there exists a $(d+1)$-simplex in $A$, represented by a map \[\widetilde\tau\colon\Delta[d+1]\to A\]
such that:
\begin{enumerate}
    \item $\widetilde\tau$ defines a map of marked simplicial sets
\[
\widetilde\tau\colon\Delta^{t+1}[d+1]\amalg_{\Delta[d+1]}\Delta^{t+2}[d+1]\to A;
\]
    \item $\tau$ is the $(t+1)$-st face of $\widetilde\tau$; that is, the diagram of simplicial sets commutes
\[
\begin{tikzcd}
\Delta[d]\arrow[d,"\tau"swap]\arrow[r,"d^{t+1}"]&\Delta[d+1]\arrow[d,"\widetilde\tau"]\\
A\arrow[r,equal]&A; 
\end{tikzcd}
\]
\item
  the restriction of the $(t+2)$-nd face of $\widetilde\tau$ to the first copy of $\Delta[t+1]$ is degenerate at a vertex; that is, it is represented by a map of simplicial sets
\[
d^{t+2}\widetilde\tau\colon\Delta[d]/\Delta[t+1]\to A.
\]
\end{enumerate}
In particular, there is a commutative diagram of marked simplicial sets
\[
\begin{tikzcd}
\Delta[d]\arrow[r,"d^{t+1}"]\arrow[d]\arrow[rd,"\tau"swap]&\Delta^{t+1}[d+1]\amalg_{\Delta[d+1]}\Delta^{t+2}[d+1]\arrow[d,"\widetilde\tau"]&\arrow[l,"d^{t+2}"swap]\arrow[d]\Delta[d]\arrow[ld,"d^{t+2}\widetilde\tau"]\\
\Delta[d]/\Delta[t]\arrow[r]&A&\Delta[d]/\Delta[t+1]\arrow[l]
\end{tikzcd}
\]
\end{prop}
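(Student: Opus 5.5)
The plan is to obtain $\widetilde\tau$ as a filler of a single complicial horn $\Lambda^{t+1}[d+1]\to\Delta^{t+1}[d+1]$ in $A$. The horn will be built from $\tau$ together with degenerate simplices. Concretely, I will define a map on the horn $\Lambda^{t+1}[d+1]$, which consists of all faces $d^i$ with $i\neq t+1$. The idea is to use $\tau$ as the $(t+1)$-st face and to fill the horn at the index $t+2$, so that $\tau$ is the face we prescribe rather than the one we create.

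So instead I would use the horn $\Lambda^{t+2}[d+1]$. Its faces are specified as follows.
\begin{enumerate}
\item The face $d^{t+1}$ is $\tau$.
\item For $i\leq t$, the face $d^i$ is a degeneracy of the corresponding face of $\tau$. More precisely, set $\sigma\coloneqq s^{t}\tau$ (repeating the vertex $t$). Then $d^{t+1}\sigma=\tau$, and I take $d^i\coloneqq d^i\sigma$ for all $i\neq t+2$.
\end{enumerate}
This choice makes all these faces compatible automatically, since they are faces of the single simplex $\sigma$.

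Now, $\sigma$ itself would be a filler, but its $(t+2)$-face is again $\tau$ up to relabeling, which does not improve the degeneracy. So instead I use $s^{t+1}$-type degeneracies to define the faces $d^i$ for $i>t+2$. These are chosen so that on the vertices $\{0,\dots,t+1\}$ everything collapses to the vertex to which $\tau|_{\Delta[t]}$ is degenerate. Since $\tau|_{\Delta[t]}$ is degenerate at a vertex $v$, the edge from $v$ to vertex $t+1$ of $\tau$ is the one to be absorbed. Concretely, I take the faces $d^i$ for $i\geq t+3$ to be degenerate simplices obtained from faces of $\tau$ by inserting a degeneracy at position $t+1$. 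I then check the simplicial identities $d^id^j=d^{j-1}d^i$ on the overlaps.

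The markings are handled as follows.
\begin{enumerate}
\item The faces containing $\{t+1,t+2,t+3\}$ are degenerate, hence marked. This is what is needed for the horn $\Lambda^{t+2}[d+1]\to\Delta^{t+2}[d+1]$ to map into $A$.
\item Filling in the complicial set $A$ then gives $\widetilde\tau\colon\Delta^{t+2}[d+1]\to A$.
\item The $\Delta^{t+1}$-marking (simplices containing $\{t,t+1,t+2\}$) must also hold. For those simplices that are faces lying in the horn, it holds by degeneracy. For the remaining ones, it follows from the thinness extension $\Delta^{t+2}[m]'\to\Delta^{t+2}[m]''$ applied to suitable faces.
\end{enumerate}
Property (3) holds because $d^{t+2}\widetilde\tau$, restricted to $\{0,\dots,t+1\}$, is the composite of the $(t+1)$-simplex built from $v$ and the collapsed edge. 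Since these lower faces were chosen degenerate, I need to verify that a filled face whose boundary on $\Delta[t+1]$ is degenerate is itself degenerate there. I expect this to follow because those vertices only meet faces in the horn.

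The main obstacle is the compatibility bookkeeping. Specifically, two things must be checked together:
\begin{enumerate}
\item the chosen degenerate faces agree on overlaps with $\tau$;
\item the resulting restriction of $d^{t+2}\widetilde\tau$ to $\Delta[t+1]$ is honestly degenerate, and not merely thin.
\end{enumerate}
If the direct check of (2) fails, I would first try a cleaner choice of faces. Namely, I would choose every face other than $d^{t+1},d^{t+2}$ as a face of the single degenerate simplex $s^{t+1}\tau$ precomposed with the collapse $\Delta[d+1]\to\Delta[d+1]/\Delta[t+1]$. This makes both the compatibility and the degeneracy of $d^{t+2}\widetilde\tau|_{\Delta[t+1]}$ formal, since that restriction only involves data from the collapsed part. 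The final commutative diagram is then just a restatement of (1)–(3).
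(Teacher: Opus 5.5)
Your horn does not extend to a map out of $\Lambda^{t+2}[d+1]$ with its complicial marking. However you modify the faces $d_i$ with $i\geq t+3$, the face $d_0$ is still $d_0\sigma$ with $\sigma=s_t\tau$. That face contains the $2$-simplex on $\{t+1,t+2,t+3\}$, which must be marked, both for the horn to be admissible and for (1). But $\sigma|_{\{t+1,t+2,t+3\}}=\tau|_{\{t,t+1,t+2\}}$, which is arbitrary. Likewise $d_t\sigma=\tau$ is a face containing $\{t+1,t+2,t+3\}$, so your claim that all such faces are degenerate is false.

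You also misidentify what goes wrong with $\sigma$. The face $d_{t+2}s_t\tau=s_t d_{t+1}\tau$ is degenerate, and $\sigma$ satisfies (2) and (3); it is (1) that fails. Repeating vertex $t+1$ instead puts $\tau(t+1)$ at vertex $t+1$ of $\widetilde\tau$. That contradicts (3) and clashes with $d_0\sigma$, where this vertex is $v$. The fallback does not typecheck, since $s_{t+1}\tau$ does not factor through $\Delta[d+1]/\Delta[t+1]$.

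More fundamentally, no horn assembled from faces and degeneracies of $\tau$ can work, because $\widetilde\tau$ must contain genuinely new thin data. Take $A=N^{\mathrm{RS}}\cO[2]$, $d=2$, $t=0$, and let $\tau$ be the nondegenerate $2$-simplex.
\begin{enumerate}
\item Conditions (2) and (3) force $\widetilde\tau$ to have vertices $0,0,1,2$.
\item Condition (1) forces $d_0\widetilde\tau$ to be a thin $2$-simplex with edges $[0,1]$ and $[1,2]$.
\item Hence $\widetilde\tau|_{\{1,3\}}=[1,2]\circ_0[0,1]$, which is neither an edge of $\tau$ nor an identity.
\end{enumerate}
Since $\Lambda^{t+2}[d+1]$ contains the whole $1$-skeleton of $\Delta[d+1]$ once $d+1\geq 3$, the horn data itself must already contain this composite. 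So before any final $\Lambda^{t+2}[d+1]$-filling, you must construct the relevant composites and their thin witnesses, coherently across the faces $d_i\widetilde\tau$ with $i\neq t+1,t+2$. This calls for an inductive construction, for instance by further lower-dimensional horn fillings; that is the missing idea.

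The paper does not do this by hand either. It extracts $\widetilde\tau$ from the inductive step in Riehl--Verity's proof of \cite[Lemma~D.7.3]{RiehlVerityBookMoreElements}, which produces a complicial companion of $\tau$ that is degenerate on the next initial simplex.
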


\begin{proof}
Using the terminology of \cite[Definition~D.7.1]{RiehlVerityBookMoreElements}, the claim we are trying to prove can be rephrased as follows:
if $\tau$ is a $d$-simplex of $A$ which is fully degenerate on an initial simplex 
$\Delta[t]\subseteq\Delta[d]$, then there is a $(d+1)$-simplex $\widetilde{\tau}$ of $A$ that exhibits $\tau'=d^{t+2}\widetilde{\tau}$ as a \emph{complicial companion} of $\tau=d^{t+1}\widetilde{\tau}$ (specifically with the index $k=t+1$) and that moreover $\tau'$ is degenerate on the incremented initial simplex $\Delta[t+1]\subseteq \Delta[d]$; the only caveat is that $\tau'$ cannot become fully degenerate this way, which means that we must have $t+1<d$. We explain how this claim is implicit in the proof of \cite[Lemma~D.7.3]{RiehlVerityBookMoreElements} even though their statement as written only records a much weaker conclusion.

In the proof of \cite[Lemma~D.7.3]{RiehlVerityBookMoreElements}, Riehl--Verity prove that for each $d$-simplex $\alpha$ of $A$ there is a sequence of complicial companionships $\alpha=\alpha^1\simeq_{A_d} \alpha^2\simeq_{A_d} \dots \simeq_{A_d} \alpha^d$ where each $\alpha^j$ is fully degenerate on the initial simplex $\Delta[j-1]\subseteq\Delta[d]$ (starting with $\alpha$ which is fully degenerate on $\Delta[0]$, vacuously). Unraveling the inductive step, we see that the construction of $\alpha^{j+1}$ only depends on the immediately preceding simplex $\alpha^{j}$ (and not, for example, on $\alpha$ or the other $\alpha^i$ inbeetween) and has the property that the initial simplex $\alpha^{j+1}|_{\Delta[j]}$ is a degeneration of the initial simplex $\alpha^j|_{\Delta[j-1]}$ so that full degeneration of the latter directly implies full degeneration of the former. Renaming $j=t+1$, $\tau=\alpha^{j}$ and $\tau'=\alpha^{j+1}$, this is precisely the claim that we set out to establish.

To summarize, the claimed construction of $\widetilde\tau$ from $\tau$ is exactly the step in the proof of \cite[Lemma~D.7.3]{RiehlVerityBookMoreElements}, where (with their notation) $\tau^{t+2}$ is constructed from $\alpha^{t+1}$.
\end{proof}

We will use two instances of naturality for the map from \cref{ProgressiveSuspensionsGeneral}:
\begin{lem}
\label{lemmaSuspension1}
Let $\ell\geq0$ and $0\leq p \leq \ell+2$.
Let $S\subseteq \{0, \ldots, \ell+2\}$ such that $|S|\geq p+1$, and
\[\overline{S}\coloneqq S\setminus \{\ell+2-p+1,\ell+2-p+2, \ldots,\ell+2-p+p\}.\]
Denote by
\[\rho\colon \Delta[S]\to \Delta[\ell+2]\quad\text{ and }\quad \overline\rho\colon \Delta[\overline{S}]\to \Delta[\ell+2-p]\]
the corresponding maps.
If $\rho$ defines a non-degenerate $(|S|-1)$-simplex in $\Sigma^p\Delta[\ell+2-p]$,
then $\overline\rho$ defines a non-degenerate $(|\overline{S}|-1)$-simplex in $\Delta[\ell+2-p]$
and there is a commutative square of marked simplicial sets
\[
\begin{tikzcd}
\Delta[S]\arrow[r,two heads]\arrow[d,"\rho"swap]&
\Sigma^p\Delta[\overline{S}]
\arrow[d,"\Sigma^p\overline\rho"]\\
\Delta[\ell+2]
\arrow[r,two heads
]&\Sigma^p\Delta[\ell+2-p]
\end{tikzcd}
\]
\end{lem}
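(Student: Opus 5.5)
The plan is to recognise the bottom map as the quotient map of \cref{ProgressiveSuspensionsGeneral} for $X=\Delta[\ell+2-p]$. Then both claims follow from the classification of surviving non-degenerate simplices in \cref{SurvivingNonDegenerate} together with naturality of that quotient in $X$.

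\textbf{Step 1 (identify the bottom map).} Write $\Delta[\ell+2]\cong\Delta[\ell+2-p]\star\Delta[p-1]$. Here the last $p$ vertices $\{\ell+2-p+1,\dots,\ell+2\}$ form the copy of $\Delta[p-1]$. The bottom horizontal map is the composite quotient
\[
X\star\Delta[p-1]\to\Sigma^pX,\qquad X=\Delta[\ell+2-p],
\]
from \cref{ProgressiveSuspensionsGeneral}. For $p=0$ everything is the identity and the statement is trivial, so assume $p\geq1$.

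\textbf{Step 2 (decompose $S$ and deduce non-degeneracy).} Under the join decomposition, the face $\rho$ corresponds to $\sigma\star\tau$. Here $\sigma=\overline S$ is the part of $S$ in the first $\ell+3-p$ vertices, and $\tau$ is $S\cap\{\ell+3-p,\dots,\ell+2\}$, reindexed as a subset of $\{0,\dots,p-1\}$. Since $\rho$ is an injective face, it is non-degenerate in $X\star\Delta[p-1]$. By hypothesis it stays non-degenerate in $\Sigma^pX$. Its dimension is $|S|-1\geq p$, so the final clause of \cref{SurvivingNonDegenerate} forces $\tau=\{0,\dots,p-1\}$. Hence $|\overline S|=|S|-p\geq1$, so $\overline S$ is non-empty. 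The map $\overline\rho$ is the inclusion of a non-empty face, hence injective, and therefore a non-degenerate $(|\overline S|-1)$-simplex of $\Delta[\ell+2-p]$. This proves the first claim.

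\textbf{Step 3 (the square).} By Step 2, $\Delta[S]\cong\Delta[\overline S]\star\Delta[p-1]$, and $\rho=\overline\rho\star\id_{\Delta[p-1]}$. The top horizontal map is the quotient $\Delta[\overline S]\star\Delta[p-1]\to\Sigma^p\Delta[\overline S]$ of \cref{ProgressiveSuspensionsGeneral}. Commutativity of the square is then exactly the naturality of this quotient in $X$, applied to $\overline\rho\colon\Delta[\overline S]\to\Delta[\ell+2-p]$. That naturality is stated in \cref{ProgressiveSuspensionsGeneral}, and can be seen by iterating the naturality of $q_X\colon X\star\Delta[0]\to\Sigma X$ as in \cref{ProgressiveSuspensionsGeneral2}.

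All objects carry the minimal marking on the source side, and the maps are composites of joins and quotients of marking-preserving maps. So the square is a square of marked simplicial sets, not just of simplicial sets.

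The only delicate point I expect is the bookkeeping in Step 2. One has to check that the vertex set $\{\ell+2-p+1,\dots,\ell+2\}$ removed in the definition of $\overline S$ is exactly the $\Delta[p-1]$-factor collapsed by the quotient. One also has to check that \cref{SurvivingNonDegenerate} applies with its $d=|S|-1\geq p\geq1$. Once the indexing is aligned, the rest is formal.
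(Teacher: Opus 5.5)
Your proposal is correct and follows essentially the same route as the paper. You identify the bottom map with the quotient $X\star\Delta[p-1]\to\Sigma^pX$ for $X=\Delta[\ell+2-p]$, use the dimension clause of \cref{SurvivingNonDegenerate} to force $\tau=\{0,\dots,p-1\}$ (so that $\rho=\overline\rho\star\Delta[p-1]$), and conclude by naturality of the quotient map of \cref{ProgressiveSuspensionsGeneral} applied to $\overline\rho$; your separate treatment of $p=0$ and of the non-emptiness of $\overline S$ only spells out details the paper leaves implicit.
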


\begin{proof}
Let $X\coloneqq\Delta[\ell+2-p]$ and consider the classification of \cref{SurvivingNonDegenerate}.
Since $\dim \rho =|S|-1 \geq p$, the only case that can happen is $\rho=\sigma\star \tau$, where $\tau=\{\ell+2-p+1,\dots,\ell+2-p+p\}$ (corresponding to $\{0,\dots, p-1\}$ in the notation of \cref{SurvivingNonDegenerate}). In other words, $\rho$ is of the form
\[\sigma\star \Delta[p-1]\colon Y\star \Delta[p-1] \to X\star\Delta[p-1],\]
where $Y\coloneq \Delta[\overline{S}]$.
Therefore the claimed commutative square is an instance of the naturality of the quotient map from \cref{ProgressiveSuspensionsGeneral} applied to the map $\overline{\rho}\coloneq\sigma\colon Y\to X$.
\end{proof}

\begin{lem}
\label{lemmaSuspension2}
For $0\leq p\leq s$, there is a commutative square of marked simplicial sets
\[
\begin{tikzcd}
\Delta[s]\arrow[r]\arrow[d]&\Sigma^p\Delta[s-p]\arrow[d]\\
\Delta[s]_t\arrow[r]&\Sigma^p\Delta[s-p]_t
\end{tikzcd}
\]
\end{lem}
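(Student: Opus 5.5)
The top horizontal map is the composite quotient $q\colon\Delta[s]\cong\Delta[s-p]\star\Delta[p-1]\to\Sigma^p\Delta[s-p]$ from \cref{ProgressiveSuspensionsGeneral} (with $X=\Delta[s-p]$, $\ell=p-1$). The bottom one is the analogous map $\Delta[s]_t\to\Sigma^p\Delta[s-p]_t$, obtained by applying the same natural quotient to $X=\Delta[s-p]_t$. The vertical maps are the entire inclusions $\Delta[s]\to\Delta[s]_t$ and $\Sigma^p(\Delta[s-p]\to\Delta[s-p]_t)$. On underlying simplicial sets both composites are the same map, since $\Delta[s-p]$ and $\Delta[s-p]_t$ have the same underlying simplicial set. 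The marking on $\Delta[s]_t$ differs from $\Delta[s]$ only in the top $s$-simplex. So commutativity is automatic once the bottom map is a well-defined map of marked simplicial sets, i.e.\ sends the top $s$-simplex to a marked simplex. (The degenerate case $s=p$ has $\Delta[0]_t$, which is just $\Delta[0]$, and needs no separate treatment.)

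The strategy is therefore to use naturality of the join. Under $\Delta[s]\cong\Delta[s-p]\star\Delta[p-1]$, the top simplex of $\Delta[s]$ is $\iota_{s-p}\star\iota_{p-1}$. By \cref{SuspensionMarked}, a simplex $\sigma\star\tau$ is marked in $\Delta[s-p]_t\star\Delta[p-1]$ if $\sigma$ is marked. Hence $\Delta[s]_t\to\Delta[s-p]_t\star\Delta[p-1]$ (the identity on underlying sets) is a marked map whenever $s-p>0$. Composing with the natural quotient $\Delta[s-p]_t\star\Delta[p-1]\to\Sigma^p\Delta[s-p]_t$ of \cref{ProgressiveSuspensionsGeneral} gives the bottom arrow. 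Naturality of that quotient in $X$, applied to $\Delta[s-p]\to\Delta[s-p]_t$, gives commutativity of the square. This agrees with \cref{IteratedSuspensionMarked}: the image of the top simplex is $\Sigma^p\iota$, which is marked because $\iota$ is marked in $\Delta[s-p]_t$.

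The remaining step, and the one I expect to be the main obstacle, is the edge case $s=p$. There $\Delta[0]_t$ is not defined in \cref{ComplicialNotation} and should be read as $\Delta[0]$. The top $s$-simplex of $\Delta[s]=\Delta[0]\star\Delta[s-1]$ then maps to $\Sigma^s\Delta[0]$, where by \cref{SurvivingNonDegenerate} it is non-degenerate and of type (B) with $\tau$ the $0$-simplex, hence unmarked. So the statement for $s=p$ must be interpreted with the bottom-left corner being $\Delta[s]$, or restricted to $s>p$. I would handle this case by noting that $\Delta[s]_t$ is replaced by the convention $\cC[-1]=\varnothing$ analogue, or simply assume $s>p$ as it is used later. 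Otherwise the proof is the formal naturality argument above.
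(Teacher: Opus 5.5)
Your argument is correct and is essentially the paper's: the paper's one-line proof is the naturality of the quotient map of \cref{ProgressiveSuspensionsGeneral}, and you make it precise by applying naturality to $\Delta[s-p]\to\Delta[s-p]_t$ (the paper's "$\Delta[s]\to\Delta[s]_t$" appears to be a slip for this) and precomposing with the identity-on-underlying marked map $\Delta[s]_t\to\Delta[s-p]_t\star\Delta[p-1]$, which is a marked map because $s-p>0$. Your observation that for $s=p\geq1$ no such square exists (the top simplex goes to the unmarked $\Sigma^s\iota_0$) is right, so you should drop the earlier parenthetical claiming this case needs no separate treatment; the edge case is harmless, since in \cref{lemmaV2} the hypothesis that an $(s-p)$-simplex is marked already forces $s>p$.
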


\begin{proof}
This is an instance of the naturality of the quotient map from \cref{ProgressiveSuspensionsGeneral} applied to the map $\Delta[s]\to\Delta[s]_t$.
\end{proof}

We will also use \cref{ProgressiveSuspensionsGeneral} to produce relevant filtrations of map $\Sigma^p\Delta[\ell+1]\to\Sigma^{\ell+1}\Delta[0]$:

\begin{rmk}
\label{ProgressiveSuspensions}
Specializing the construction from \cref{ProgressiveSuspensionsGeneral2} to the case $X=\Delta[0]$ we obtain the filtration
\[
\Delta[\ell+1]\to\Sigma\Delta[\ell]\to\dots\to\Sigma^p\Delta[\ell-p+1]\to\Sigma^{p+1}\Delta[\ell-p]\to\dots\to\Sigma^{\ell}\Delta[1]\to \Sigma^{\ell+1}\Delta[0],
\]
with $0\leq p\leq\ell$. Further, following the convention that each quotient collapses the first copy of a simplex,
each of intermediate quotient maps can be factored into intermediate quotient maps
\[
\Sigma^p\Delta[\ell-p+1]=\Sigma^p(\Delta[\ell-p+1]/\Delta[0])\to\Sigma^p(\Delta[\ell-p+1]/\Delta[1])\to\dots\quad\quad\]
\[\dots\to \Sigma^p(\Delta[\ell-p+1]/\Delta[t])\to \Sigma^p(\Delta[\ell-p+1]/\Delta[t+1])\to\dots\]
\[\dots\to\Sigma^p(\Delta[\ell-p+1]/\Delta[\ell-p])=\Sigma^p(\Sigma\Delta[\ell-p])=\Sigma^{p+1}\Delta[\ell-p]
\]
with  $0\leq t\leq \ell-p-1$.
\end{rmk}

\begin{lem}
\label{lemmaV2}
Let $\ell\geq 0$ and $0\leq p\leq s\leq  \ell +2$.
Fix a strict $\omega$-category $\cP$,
an injective map
\[\overline{\rho}\colon \Delta[s-p]\to \Delta[\ell+2-p],\]
and a map of marked simplicial sets.
\[\widetilde{\gamma}\colon\Sigma^p\Delta[\ell+2-p]\to N^{RS}\cP.\]
Assume that the $(s-p)$-simplex
\[\Delta[s-p]\xrightarrow{\overline{\rho}} \Delta[\ell+2-p]\xrightarrow{\widetilde{\beta}} N^{\mathrm{RS}}(\cP^{(p)}(\glboundary{\widetilde{\gamma}}))\]
is marked,
where $\widetilde{\beta}$ is the transpose of $\widetilde{\gamma}$
under the adjunction \cref{IteratedSuspensionAdjunctionMarked}
(using the identification of \cref{NvsIteratedHom} and the notation from \cref{GlobularBoundary}).
Then the composite 
\[
\Delta[s]\twoheadrightarrow\Sigma^p\Delta[s-p]\xrightarrow{\Sigma^p\overline\rho}\Sigma^p\Delta[\ell+2-p]\xrightarrow{\widetilde\gamma}N^{\mathrm{RS}}\cP
\]
is a marked $s$-simplex of $N^{\mathrm{RS}}\cP$.
\end{lem}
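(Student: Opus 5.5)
The plan is to transport the marking statement through the adjunctions: a marked $(s-p)$-simplex in the hom becomes, after suspension, a map out of $\Sigma^p\Delta[s-p]_t$, which can then be restricted along the quotient from \cref{lemmaSuspension2}.

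\textbf{Step 1: rewrite the hypothesis.} By \cref{NvsIteratedHom}, the target of $\widetilde\beta$ is $(N^{\mathrm{RS}}\cP)^{(p)}(\glboundary\widetilde\gamma)$. Marked $(s-p)$-simplices of this iterated hom correspond, by the adjunction of \cref{IteratedSuspensionAdjunctionMarked}, to maps $\Sigma^p\Delta[s-p]_t\to N^{\mathrm{RS}}\cP$ under $\Sigma^p\varnothing$. This follows by iterating the description of simplices of homs in \cref{SuspensionMarked}. So the hypothesis says that the map
\[
\widetilde\gamma\circ\Sigma^p\overline\rho\colon\Sigma^p\Delta[s-p]\to N^{\mathrm{RS}}\cP
\]
extends along the entire map $\Sigma^p\Delta[s-p]\to\Sigma^p\Delta[s-p]_t$.

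To justify this, I will use naturality of the adjunction in the source variable. The transpose of $\widetilde\beta\circ\overline\rho$ is $\widetilde\gamma\circ\Sigma^p\overline\rho$, with the same globular boundary $\glboundary\widetilde\gamma$, because $\Sigma^p\overline\rho$ is a map under $\Sigma^p\varnothing$. The marked simplex $\Delta[s-p]_t\to(N^{\mathrm{RS}}\cP)^{(p)}(\glboundary\widetilde\gamma)$ restricts to $\widetilde\beta\circ\overline\rho$, so its transpose is the desired extension.

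\textbf{Step 2: precompose with the quotient.} By \cref{lemmaSuspension2} there is a commutative square
\[
\begin{tikzcd}
\Delta[s]\arrow[r]\arrow[d]&\Sigma^p\Delta[s-p]\arrow[d]\\
\Delta[s]_t\arrow[r]&\Sigma^p\Delta[s-p]_t
\end{tikzcd}
\]
Composing the bottom row with the extension from Step 1 gives a map $\Delta[s]_t\to N^{\mathrm{RS}}\cP$. By commutativity of the square, its restriction to $\Delta[s]$ is exactly the composite in the statement. Hence that $s$-simplex is marked.

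\textbf{Main obstacle.} The only delicate point is Step 1, namely identifying the marked simplices of the iterated hom $(-)^{(p)}(-)$ with maps out of $\Sigma^p\Delta[k]_t$. The paper defines this identification only for $p=1$, in \cref{SuspensionMarked}. For general $p$ I would argue by induction using \cref{IteratedAdjunction}: the iterated hom is a composite of single homs taken in slices, and at each step marked $k$-simplices correspond to maps out of $\Sigma\Delta[k]_t$ under the appropriate base.

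The edge cases are handled by conventions:
\begin{itemize}
\item If $p=0$, the statement is tautological.
\item If $s=p$, then $\Delta[0]_t$ must be read as $\Delta[0]$, since $0$-simplices are never marked. The hypothesis is then vacuous, and the conclusion should reduce to the fact that the relevant $s$-simplex is degenerate or already marked. I would check this case separately, or note that the intended use has $s>p$.
\end{itemize}
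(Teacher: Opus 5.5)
Your argument is correct and is essentially the paper's proof read forwards. The paper uses \cref{lemmaSuspension2} to reduce the lifting problem for $\Delta[s]\to\Delta[s]_t$ to one for $\Sigma^p\Delta[s-p]\to\Sigma^p\Delta[s-p]_t$ under $\Sigma^p\varnothing$, then transposes it along \cref{IteratedSuspensionAdjunctionMarked} to the marking hypothesis on $\widetilde\beta\circ\overline\rho$; that is exactly your Steps 1 and 2, in the opposite order. Step 1 needs no induction: marked $k$-simplices of any $Y$ are just maps $\Delta[k]_t\to Y$, and the iterated adjunction applies directly.

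One correction on the edge case $s=p$. Since $0$-simplices are never marked, the hypothesis cannot hold there, so the lemma is vacuously true. Reading $\Delta[0]_t$ as $\Delta[0]$ would instead make the statement false for $p>0$, because the unmarked non-degenerate $p$-simplex of $\Sigma^p\Delta[0]$ can be sent to a non-identity $p$-cell. So just restrict to $s>p$, which is all the applications in \cref{theorem} use.
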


\begin{proof}
We have to show that there is a solution to the lifting problem in $m\sset$
\[
\begin{tikzcd}
\Delta[s]
\arrow[rd,two heads]
\arrow[dd]
&
&&
N^{\mathrm{RS}}\cP
\\
&
\Sigma^p\Delta[s-p]
\arrow[r,"\Sigma^p\overline\rho" swap]
&
\Sigma^p\Delta[\ell+2-p]
\arrow[ru,"\widetilde\gamma"swap]
&
\\
\Delta[s]_t
\arrow[rrruu,dashed,bend right=50,"?" swap]
&
&
&
\\
&
&&
\end{tikzcd}
\]
By \cref{lemmaSuspension2}, it suffices to solve the lifting problem in $m\sset$
\[
\begin{tikzcd}
\Delta[s]
\arrow[rd,two heads]
\arrow[dd]
&
&&
N^{\mathrm{RS}}\cP
\\
&
\Sigma^p\Delta[s-p]
\arrow[r,"\Sigma^p\overline\rho" swap]
\arrow[dd]
&
\Sigma^p\Delta[\ell+2-p]
\arrow[ru,"\widetilde\gamma"swap]
&
\\
\Delta[s]_t
\arrow[rd,two heads]
&
&
&
\\
&
\Sigma^p\Delta[s-p]_t
\arrow[rruuu,dashed,bend right,"?" swap]
&&
\end{tikzcd}
\]
Hence, it suffices to solve the lifting problem in $m\sset$
\[
\begin{tikzcd}
&&
N^{\mathrm{RS}}\cP
\\
\Sigma^p\Delta[s-p]
\arrow[r,"\Sigma^p\overline\rho"]
\arrow[d]
&
\Sigma^p\Delta[\ell+2-p]
\arrow[ru,"\widetilde\gamma"swap]
&
\\
\Sigma^p\Delta[s-p]_t
\arrow[rruu,dashed,bend right,"?" swap]
&&
\end{tikzcd}
\]
Observe that, by \cref{GlobularBoundary}, the lifting problem actually lives in ${}^{\Sigma^{p}\varnothing/}m\sset$.
By \cref{IteratedSuspensionAdjunctionMarked}, this is equivalent to solving
the transposed lifting problem in $m\sset$
\[
\begin{tikzcd}
&&
N^{\mathrm{RS}}\bigl(\cP^{(p)}(\glboundary\widetilde{\gamma})\bigr)
\\
\Delta[s-p]
\arrow[r,"\overline\rho"]
\arrow[d]
&
\Delta[\ell+2-p]
\arrow[ru,"\widetilde\beta"]
&
\\
\Delta[s-p]_t
\arrow[rruu,dashed,bend right,"?" swap]
&&
\end{tikzcd}
\]
Such a lift exists precisely because
$\widetilde\beta\circ\overline\rho$ is marked by assumption.
Hence $\widetilde\gamma\circ\Sigma^p\overline\rho$ is marked, as desired.
\end{proof}

We can now prove \cref{theorem}.

\begin{proof}[Proof of \cref{theorem}]
The statement
is proven by induction on $\ell\geq0$. We treated the base case $\ell=0$ in \cref{InductionBase} and we treat the inductive step for general $\ell$ now. 

Given $\sigma\colon\Delta[\ell+1]\to N\cP$ an $(\ell+1)$-simplex of $N\cP$,
we say that $\sigma$ has
\emph{depth} $p$ for $0\leq p\leq \ell$ and \emph{type} $t$
for $0\leq t\leq\ell-p$ if it factors through 
\[
\sigma\colon \Sigma^p(\Delta[\ell+1-p]/\Delta[t])\to N\cP,
\]
referring to the maps from \cref{ProgressiveSuspensions},
and $(p,t)$ is maximal with this property (in lexicographic order).
For $0\leq p\leq \ell$ and $0\leq t\leq\ell-p$
let $W_{\cP,\ell+1}^{(p,t)}$ denote the marked simplicial set obtained from
$\mathrm{th}_{\ell+1}N^{RS}\cP$ by marking all $(\ell+1)$-simplices
of depth and type at least $(p,t)$.

Observe that for $0\leq p<\ell$ and $t=\ell-p$ we have
\[
  \Sigma^p(\Delta[\ell+1-p]/\Delta[t])=\Sigma^p(\Sigma\Delta[\ell-p])
  =\Sigma^{p+1}(\Delta[\ell-p]/\Delta[0]);
\]
hence if a simplex has depth and type at least $(p,\ell-p)$,
then it already has depth and type at least $(p+1,0)$.
In other words, the inclusion
\[
  W_{\cP,\ell+1}^{(p,\ell-p)}\xrightarrow{=}W_{\cP,\ell+1}^{(p+1,0)}
\]
is an equality.

The desired inclusion factors as follows:
\[
\mathrm{th}_{\ell+1}N^{RS}\cP\hookrightarrow W_{\cP,\ell+1}^{(\ell,0)}\hookrightarrow W_{\cP,\ell+1}^{(\ell-1,0)}\hookrightarrow\dots
\hookrightarrow W_{\cP,\ell+1}^{(p+1,0)}\hookrightarrow W_{\cP,\ell+1}^{(p,0)}\hookrightarrow\dots\quad\quad\quad\quad\quad\quad\]
\[\quad\quad\quad\quad\quad\quad\quad\quad\quad\quad\quad\quad\quad\quad\quad\quad\quad \dots\hookrightarrow W_{\cP,\ell+1}^{(1,0)} \hookrightarrow W_{\cP,\ell+1}^{(0,0)} =\mathrm{th}_{\ell}N^{RS}\cP.
\]

We first show that
\[\mathrm{th}_{\ell+1}N^{RS}\cP\hookrightarrow W_{\cP,\ell+1}^{(\ell,0)}\]
is an acyclic cofibration in $m\sset_{\mathrm{cmp,sat}}$.
Given any $(\ell+1)$-simplex $\sigma$ of $N\cP$ of depth $\ell$ we construct a map of simplicial sets $\widetilde\sigma\colon\Sigma^\ell\Delta[3]\to N\cP$
as follows.
An $(\ell+1)$-simplex $\sigma$ in $N\cP$ of depth $\ell$ is represented by a map of simplicial sets
\[
\sigma\colon\Sigma^\ell\Delta[1]\to N\cP.
\]
By \cref{GlobularBoundary}, this is also a map of simplicial sets under $\Sigma^{\ell}\varnothing$
\[
\begin{tikzcd}
&\Sigma^{\ell}\varnothing\arrow[ld,""swap]\arrow[rd,"\glboundary\sigma"]&\\
\Sigma^\ell\Delta[1]\arrow[rr,"\sigma"swap]&&N\cP.
\end{tikzcd}
\]
Using \cref{CvsIteratedSigma}, this corresponds through the adjunction from \cref{IteratedSuspensionAdjunctionMarked} to a map of simplicial sets
\[
\alpha\colon\Delta[1]\xrightarrow{} N(\cP^{(\ell)}(\glboundary\sigma)).
\]
By \cref{E:hom}, the $\omega$-category $\cP^{(\ell)}(\glboundary\sigma)$ is a contractible $\omega$-category.
Hence, we can perform on its $1$-simplex $\alpha$ the construction described in the proof of \cref{InductionBase},
which yields the existence of a $3$-simplex
\[
\widetilde\alpha\colon\Delta[3]\to N(\cP^{(\ell)}(\glboundary\sigma)).
\]
which fits into the following diagram of simplicial sets
\[
\begin{tikzcd}
\Delta[1]\arrow[d,"\alpha"swap]\arrow[r,"{[1,2]}"]&\Delta[3]\arrow[d,"\widetilde\alpha"]\\
N(\cP^{(\ell)}(\glboundary\sigma))\arrow[r,equal]&N(\cP^{(\ell)}(\glboundary\sigma))
\end{tikzcd}
\]
Moreover, by construction, the map $\widetilde\alpha$ of simplicial sets defines maps of marked simplicial sets
\[
\widetilde{\alpha}\colon\Delta[3]^{\mathrm{eq}}\to \mathrm{th}_{1}N^{RS}(\cP^{(\ell)}(\glboundary\sigma))\quad\text{ and }\quad\widetilde{\alpha}\colon\Delta[3]^{\sharp}\to \mathrm{th}_{0}N^{RS}(\cP^{(\ell)}(\glboundary\sigma)),
\]
which fit into the following commutative diagram of marked simplicial sets
\[
\begin{tikzcd}
\Delta[1]\arrow[d,"{[1,2]}"swap]\arrow[r]\arrow[dd,bend right=70,"\alpha"swap]&\Delta[1]_t\arrow[d,"{[1,2]}"]\arrow[dd,bend left=70,"\alpha"]\\
\Delta[3]^{\mathrm{eq}}\arrow[r]\arrow[d,"\widetilde\alpha"swap]&\Delta[3]^\sharp\arrow[d,"\widetilde\alpha"]\\
\mathrm{th}_1N^{RS}(\cP^{(\ell)}(\glboundary\sigma))\arrow[r]&\mathrm{th}_0N(\cP^{(\ell)}(\glboundary\sigma))
\end{tikzcd}
\]

The map of simplicial sets $\widetilde{\alpha}$ corresponds through the adjunction from \cref{IteratedSuspensionAdjunctionMarked} to a map of simplicial sets under $\Sigma^{\ell}\varnothing$
\[
\begin{tikzcd}
&\Sigma^{\ell}\varnothing\arrow[ld,""swap]\arrow[rd,"\glboundary\sigma"]&\\
\Sigma^\ell\Delta[3]\arrow[rr,"\widetilde\sigma"swap]&&N\cP.
\end{tikzcd}
\]
In particular, we obtained a map of simplicial sets 
\[
\widetilde\sigma\colon\Sigma^\ell\Delta[3]\to N\cP,
\]
with $\glboundary\sigma=\glboundary\widetilde\sigma$,
which fits into the following commutative diagram of simplicial sets
\[
\begin{tikzcd}
\Sigma^\ell\Delta[1]\arrow[d,"\sigma"swap]\arrow[rr,"{\Sigma^\ell[1,2    ]}"]&&\Sigma^\ell\Delta[3]\arrow[d,"\widetilde\sigma"]\\
N\cP\arrow[rr,equal]&&N\cP.
\end{tikzcd}
\]
We can show that the map $\widetilde\sigma$ of simplicial sets defines maps of marked simplicial sets
\[
\widetilde{\sigma}\colon\Sigma^\ell\Delta[3]^{\mathrm{eq}}\to \mathrm{th}_{\ell+1}N^{RS}\cP\quad\text{ and }\quad\widetilde{\sigma}\colon\Sigma^\ell\Delta[3]^{\sharp}\to W_{\cP,\ell+1}^{(\ell,0)},
\]
and it fits into the following commutative diagram of marked simplicial sets
\[
\begin{tikzcd}
\Sigma^\ell\Delta[1]\arrow[d,"{\Sigma^\ell[1,2]}"swap]\arrow[r]\arrow[dd,bend right=90,"\sigma" swap]&
\Sigma^\ell\Delta[1]_t\arrow[d,"{\Sigma^\ell[1,2]}"]\arrow[dd,bend left=80,"\sigma"]\\
\Sigma^\ell\Delta[3]^{\mathrm{eq}}\arrow[r]\arrow[d,"\widetilde\sigma"swap]&\Sigma^\ell\Delta[3]^\sharp\arrow[d,"\widetilde\sigma"]\\
\mathrm{th}_{\ell+1}N^{RS}\cP\arrow[r]&W^{(\ell,0)}_{\cP,\ell+1}.
\end{tikzcd}
\]
To see this, recall from \cref{IteratedSuspensionMarked} that all non-degenerate marked $k$-simplices of $\Sigma^\ell\Delta[3]^{\mathrm{eq}}$(resp.~$\Sigma^\ell\Delta[3]^\sharp$) are of the form $\Sigma^\ell\tau$, with $\tau$ marked non-degenerate $(k-\ell)$-simplex
in $\Delta[3]^{\mathrm{eq}}$ (resp.~$\Delta[3]^\sharp$).
We then observe the following:
\begin{itemize}[leftmargin=*]
    \item The simplices $\widetilde\sigma_{\Sigma^\ell[0,2]}$ and $\widetilde\sigma_{\Sigma^\ell[1,3]}$ are degenerate $(\ell+1)$-simplices in $N^{RS}\cP$, hence marked in $N^{RS}\cP$, because $\widetilde\alpha_{[0,2]}$ and $\widetilde\alpha_{[1,3]}$ are by construction degenerate $1$-simplices in $N\cP^{(\ell)}(\glboundary\sigma)$.
    \item For all $0\leq i< j< k\leq 3$, the simplices $\widetilde\sigma_{\Sigma^\ell[0,1,2,3]}$ and $\widetilde\sigma_{\Sigma^\ell[i,j,k]}$
    have dimension higher than $\ell+1$ in $N\cP$, because $\widetilde\alpha_{[0,1,2,3]}$ and $\widetilde\alpha_{[i,j,k]}$ have dimension higher than $1$ in $\cP(\ell)(\glboundary\sigma)$.
    \item The simplices $\widetilde\sigma\circ{\Sigma^\ell[0,1]}$, $\widetilde\sigma\circ{\Sigma^\ell[1,2]}$,  $\widetilde\sigma\circ{\Sigma^\ell[2,3]}$, and $\widetilde\sigma\circ{\Sigma^\ell[0,3]}$ have depth at least $\ell$ in $N\cP$ by construction.
\end{itemize}
Hence, all non-degenerate marked simplices of $\Sigma^\ell\Delta[3]^{\mathrm{eq}}$ (resp.~$\Sigma^\ell\Delta[3]^\sharp$) are sent to marked simplices of $\mathrm{th}_{\ell+1}N^{RS}\cP$ (resp.~$W^{(\ell,0)}_{\cP,\ell+1}$), validating the existence of the desired diagram of marked simplicial sets.

Moreover, the lower square can be used to build a pushout of marked simplicial sets
\[
\begin{tikzcd}
\coprod_{\sigma}\Sigma^\ell\Delta[3]^{\mathrm{eq}}\arrow[r]\arrow[d,"\coprod_{\sigma}\widetilde\sigma"swap]&\coprod_{\sigma}\Sigma^\ell\Delta[3]^\sharp\arrow[d,"\coprod_{\sigma}\widetilde\sigma"]\\
\mathrm{th}_{\ell+1}N^{RS}\cP\arrow[r]&W^{(\ell,0)}_{\cP,\ell+1}
\end{tikzcd}
\]
indexed over all $(\ell+1)$-simplices $\sigma$ of $N\cP$ of depth $\ell$.
Since the top map is an acyclic cofibration in $m\sset_{\mathrm{cmp,sat}}$, then so is the bottom one, as desired.

Next, we show that
\[
W_{\cP,\ell+1}^{(p+1,0)}\hookrightarrow W_{\cP,\ell+1}^{(p,0)}
\]
is an acyclic cofibration in $m\sset_{\mathrm{cmp,sat}}$ for $\ell>p$.
We factor the desired inclusion as
\[
W_{\cP,\ell+1}^{(p+1,0)}= W_{\cP,\ell+1}^{(p,\ell-p)}\hookrightarrow W_{\cP,\ell+1}^{(p,\ell-p-1)}\hookrightarrow\dots
\hookrightarrow W_{\cP,\ell+1}^{(p,t+1)}\hookrightarrow W_{\cP,\ell+1}^{(p,t)}\hookrightarrow \dots\hookrightarrow W_{\cP,\ell+1}^{(p,1)}\hookrightarrow W_{\cP,\ell+1}^{(p,0)}.
\]
For this, we show that
\[
W_{\cP,\ell+1}^{(p,t+1)}\to W_{\cP,\ell+1}^{(p,t)}
\]
is an acyclic cofibration in $m\sset_{\mathrm{cmp,sat}}$ for all $t=\ell-p-1,\dots,0$.

Given any $(\ell+1)$-simplex $\gamma$ of $N\cP$ of depth $p$ and type $t$ we construct a map
\[
\widetilde\gamma\colon\Sigma^p\Delta[\ell+2-p]\to N\cP.
\]
Given
\[\gamma\colon\Sigma^p\Delta[\ell+1-p]\to N\cP,\]
using the adjunctions from 
\cref{IteratedSuspensionAdjunctionCat,IteratedSuspensionAdjunctionMarked,NerveAdjunctionMarked}
similarly to the previous argument,
we obtain a map of simplicial sets of type $t$
\[
\beta\colon\Delta[\ell+1-p]\to N(\cP^{(p)}(\glboundary\gamma)).
\]
By \cite[Theorem 249]{VerityComplicialAMS},
the marked simplicial set $N^{\mathrm{RS}}(\cP^{(p)}(\glboundary\gamma))$ is a complicial set. Since $0\leq t\leq\ell-p-1$,
we can perform on its $(\ell+1-p)$-simplex $\beta$ the construction from \cref{RVmovingLemma}, which yields an $(\ell+2-p)$-simplex of $N(\cP^{(p)}(\glboundary\gamma))$, so a map of simplicial sets
\[
\widetilde\beta\colon\Delta[\ell+2-p]\to N(\cP^{(p)}(\glboundary\gamma)),
\]
which fits into the following diagram of simplicial sets
\[
\begin{tikzcd}
\Delta[\ell+1-p]\arrow[d,"\beta"swap]\arrow[r,"d^{t+1}"]&\Delta[\ell+2-p]\arrow[d,"\widetilde\beta"]\\
N(\cP^{(p)}(\glboundary\gamma))\arrow[r,equal]&N(\cP^{(p)}(\glboundary\gamma)).
\end{tikzcd}
\]
We claim that it fits into the following commutative diagram of marked simplicial sets
\[
\begin{tikzcd}
\Delta[\ell+1-p]\arrow[d,"d^{t+1}"swap]\arrow[r]\arrow[dd,bend right=70,"\beta"swap]&\Delta[\ell+1-p]_t\arrow[d,"d^{t+1}"]\arrow[dd,bend left=70,"\beta"]\\
\Delta^{t+1}[\ell+2-p]'\arrow[r]\arrow[d,"\widetilde\beta"swap]&\Delta^{t+1}[\ell+2-p]''\arrow[d,"\widetilde\beta"]\\
W_{\cP^{(p)}(\glboundary\gamma),\ell+1-p}^{(0,t+1)}\arrow[r]& \quad W_{\cP^{(p)}(\glboundary\gamma),\ell+1-p}^{(0,t)}
\end{tikzcd}
\]
To see this, we observe that:
\begin{itemize}[leftmargin=*]
    \item The simplex $\widetilde\beta$ defines a map of marked simplicial sets $\Delta^{t+1}[\ell+2-p]\to N^{RS}(\cP^{(p)}(\glboundary\gamma))$.
    \item The $t$-th face $d_t(\widetilde\beta)$ of $\widetilde\beta$ is marked in $N^{RS}\cP^{(p)}(\glboundary\gamma)$, because the simplex $\widetilde\beta$ defines by construction a
    map of marked simplicial sets $\Delta^{t+2}[\ell+2-p]\to N^{RS}(\cP^{(p)}(\glboundary\gamma))$.
  \item
    The $(t+2)$-nd face $d_{t+2}(\widetilde\beta)$ of $\widetilde\beta$
    is by construction at least of depth $0$ and type $t+1$ in $N^{RS}(\cP^{(p)}(\glboundary\gamma))$.
  \item
    The $(t+1)$-st face of $\widetilde\beta$
    is $d_{t+1}(\widetilde\beta)=\beta$,
    which is of type $t$ in $N^{RS}(\cP^{(p)}(\glboundary\gamma))$.
\end{itemize}
Hence, all non-degenerate marked simplices of $\Delta^{t+1}[\ell+2-p]'$ (resp.~$\Delta^{t+1}[\ell+2-p]''$)
are sent to marked simplices of $W_{\cP^{(p)}(\glboundary\gamma),\ell+1-p}^{(0,t+1)}$ (resp.~$W_{\cP^{(p)}(\glboundary\gamma),\ell+1-p}^{(0,t)}$), validating the existence of the desired diagram of marked simplicial sets.

In particular, using the adjunctions from \cref{IteratedSuspensionQuillenCat} similarly to the previous argument,
we obtain a map of simplicial sets
\[
\widetilde\gamma\colon\Sigma^p\Delta[\ell+2-p]\to N\cP,
\]
which fits into the following commutative diagram of simplicial sets
\[
\begin{tikzcd}
\Sigma^p\Delta[\ell+1-p]\arrow[d,"\gamma"swap]\arrow[r,"\Sigma^p d^{t+1}"]&\Sigma^p\Delta[\ell+2-p]\arrow[d,"\widetilde\gamma"]\\
N\cP\arrow[r,equal]&N\cP.
\end{tikzcd}
\]
We claim that the map $\widetilde\gamma$ defines maps of marked simplicial sets
\[
\widetilde\gamma\colon\Sigma^p\Delta^{t+1}[\ell+2-p]'\to W_{\cP,\ell+1}^{(p,t+1)}
\quad\text{ and }\quad \widetilde\gamma\colon\Sigma^p\Delta^{t+1}[\ell+2-p]''\to W_{\cP,\ell+1}^{(p,t)},\]
which fit into the following commutative diagram of marked simplicial sets
\[
\begin{tikzcd}
\Sigma^p\Delta[\ell+1-p]\arrow[d,"\Sigma^pd^{t+1}"swap]\arrow[r,""]\arrow[dd,bend right=90,"\gamma"swap]&\Sigma^p\Delta[\ell+1-p]_t\arrow[d,"\Sigma^pd^{t+1}"]\arrow[dd,bend left=90,"\gamma"]\\
\Sigma^p\Delta^{t+1}[\ell+2-p]'\arrow[r]\arrow[d,"\widetilde\gamma"swap]&\Sigma^p\Delta^{t+1}[\ell+2-p]''\arrow[d,"\widetilde\gamma"]\\
W_{\cP,\ell+1}^{(p,t+1)} \arrow[r]&W_{\cP,\ell+1}^{(p,t)}
\end{tikzcd}
\]
To see this, recall from \cref{IteratedSuspensionMarked} that all non-degenerate marked $k$-simplices of $\Sigma^p\Delta[\ell+1-p]'$ (resp.~$\Sigma^p\Delta[\ell+1-p]''$) are of the form $\Sigma^p\tau$,
with $\tau$ marked non-degenerate $(k-p)$-simplex
in $\Delta[\ell+1-p]'$ (resp.~$\Delta[\ell+1-p]''$).
We then observe the following:
\begin{itemize}[leftmargin=*]
\item The simplex $\widetilde\gamma$ defines a
map of marked simplicial sets $\Delta^{t+1}[\ell+2]\to N^{RS}\cP$,
which follows from the fact that
the simplex $\widetilde\beta$ defines map of marked simplicial sets $\Delta^{t+1}[\ell+2-p]\to N^{RS}(\cP^{(p)}(\glboundary\gamma))$.

To see this, we show that, for all
$\{t,t+1,t+2\}\subseteq S\subseteq\{0,\dots,\ell+2\}$, writing
$s=|S|-1$ and letting
\[
\rho\colon\Delta[s]\to\Delta[\ell+2]
\]
be the induced map, the simplex
\[
\widetilde\gamma\circ\rho\colon
\Delta[s]\to N^{\mathrm{RS}}\cP
\]
is marked in $N^{\mathrm{RS}}\cP$.
Let $\{t,t+1,t+2\}\subseteq S\subseteq\{0,\dots,\ell+2\}$
and let
$\rho\colon\Delta[s]\to\Delta[\ell+2]$ be the corresponding map. Given that
$0\leq t\leq\ell-p-1$, we have
$t+2\leq\ell-p+1<\ell+2-p$,
so we have
$\{t,t+1,t+2\}\subseteq S\cap\{0,\ldots,\ell+2-p\}$,
so $S\cap\{0,\ldots,\ell+2-p\}$ has at least $3$ elements.
\begin{itemize}
\item If $\widetilde\gamma\circ\rho$ is a degenerate $s$-simplex in $N^{\mathrm{RS}}\cP$, then it defines a marked $s$-simplex in $N^{\mathrm{RS}}\cP$.
\item If $\widetilde\gamma\circ\rho$ is not a degenerate $s$-simplex in $N^{\mathrm{RS}}\cP$, by (the contrapositive of) \cref{SurvivingNonDegenerate}, we have $s\geq p$, so \cref{lemmaSuspension1} applies. By \cref{lemmaSuspension1}, we obtain that there is a commutative diagram
\[
\begin{tikzcd}
\Delta[s]
\arrow[r,two heads]
\arrow[d,"\rho"swap]
&
\Sigma^p\Delta[s-p]
\arrow[d,"\Sigma^p\overline\rho"]
\\
\Delta[\ell+2]
\arrow[r,two heads]
&
\Sigma^p\Delta[\ell+2-p].
\end{tikzcd}
\]
for an injective map
\[\overline\rho\colon\Delta[s-p]\to\Delta[\ell+2-p].\]
We record this, with a slight abuse of notation, as $\rho=\Sigma^p\overline\rho$. In particular, we obtain that
$\widetilde\gamma\circ\rho=\widetilde\gamma\circ\Sigma^p\overline\rho$. Moreover, since $\{t,t+1,t+2\}\subseteq S\cap\{0,\ldots,\ell+2-p\}$, the image of $\overline\rho$ contains $\{t,t+1,t+2\}$. Thus $\overline\rho$ defines a marked $(s-p)$-simplex of $\Delta^{t+1}[\ell+2-p]$. Since the simplex $\widetilde\beta$ defines a map of marked simplicial sets $\Delta^{t+1}[\ell+2-p]\to N^{\mathrm{RS}}(\cP^{(p)}(\glboundary\gamma))$, it follows that $\widetilde\beta\circ\overline\rho$ defines a marked $(s-p)$-simplex in $N^{\mathrm{RS}}(\cP^{(p)}(\glboundary\gamma))$. By \cref{lemmaV2}, we obtain that $\widetilde\gamma\circ\rho=\widetilde\gamma\circ\Sigma^p\overline\rho$ defines a marked $s$-simplex in $N^{\mathrm{RS}}\cP$, as desired.
\end{itemize}

\item The $t$-th face $d_t(\widetilde\gamma)
$ of $\widetilde\gamma$ is marked in $N^{RS}\cP$, which follows from the fact that the $t$-th face $d_t(\widetilde\beta)$ is marked in $N^{RS}\cP^{(p)}(\glboundary\gamma)$.

To see this, consider the coface map
\[d^t\colon\Delta[\ell+1-p]\to\Delta[\ell+2-p].\]
Given that $t\leq \ell-p-1$, by \cref{lemmaSuspension1} there is a commutative diagram
\[
\begin{tikzcd}
\Delta[\{0,\ldots,\widehat{t},\ldots,\ell+2\}]
\arrow[r,two heads]
\arrow[d,"d^t"swap]
&
\Sigma^p\Delta[\{0,\ldots,\widehat{t},\ldots,\ell+2-p\}]
\arrow[d,"\Sigma^p d^t"]
\\
\Delta[\ell+2]
\arrow[r,two heads]
&
\Sigma^p\Delta[\ell+2-p].
\end{tikzcd}
\]
We record this, with a slight abuse of notation, as $d^t=\Sigma^p d^t$. Thus $d_t(\widetilde\gamma)=\widetilde\gamma\circ d^t=\widetilde\gamma\circ\Sigma^p d^t$. Since the $(\ell+1-p)$-simplex $d_t(\widetilde\beta)=\widetilde\beta\circ d^t$
is marked in $N^{RS}(\cP^{(p)}(\glboundary\gamma))$, by \cref{lemmaV2} we obtain that $d_t(\widetilde\gamma)=\widetilde\gamma\circ\Sigma^p d^t$ defines a marked $(\ell+1)$-simplex in $N^{RS}\cP$, as desired.

\item
  For $j\in\{t+1,t+2\}$, the $j$-th face $d_j(\widetilde\gamma)$ of $\widetilde\gamma$
  is (at least) of depth $p$ and type $j-1$ in $N^{RS}\cP$,
  because the $j$-th face $d_j(\widetilde\beta)$ of $\widetilde\beta$
  is (at least) of depth $0$ and type $j-1$ in $N^{RS}(\cP^{(p)}(\glboundary\gamma))$.

To see this, consider the coface map
\[d^j\colon\Delta[\ell+1-p]\to\Delta[\ell+2-p].\]
Given that $t\leq \ell-p-1$, by \cref{lemmaSuspension1} there is a commutative diagram
\[
\begin{tikzcd}
\Delta[\{0,\ldots,\widehat{j},\ldots,\ell+2\}]
\arrow[r,two heads]
\arrow[d,"d^j"swap]
&
\Sigma^p\Delta[\{0,\ldots,\widehat{j},\ldots,\ell+2-p\}]
\arrow[d,"\Sigma^p d^j"]
\\
\Delta[\ell+2]
\arrow[r,two heads]
&
\Sigma^p\Delta[\ell+2-p].
\end{tikzcd}
\]
We record this, with a slight abuse of notation, as $d^j=\Sigma^p d^j$.
Thus $d_j(\widetilde\gamma)=\widetilde\gamma\circ d^j=\widetilde\gamma\circ\Sigma^p d^j$.
Since the $(\ell+1-p)$-simplex $d_j(\widetilde\beta)=\widetilde\beta\circ d^j$
is (at least) of depth $0$ and type $j-1$ in $N^{RS}(\cP^{(p)}(\glboundary\gamma))$,
we obtain that $d_j(\widetilde\gamma)=\widetilde\gamma\circ\Sigma^p d^j$
defines an $(\ell+1)$-simplex which is (at least) of depth $p$ and type $j-1$ in $N^{RS}\cP$, as desired.
\end{itemize}
Hence, all non-degenerate marked simplices of $\Sigma^p\Delta^{t+1}[\ell+2-p]'$ (resp.~$\Sigma^p\Delta^{t+1}[\ell+2-p]''$) are sent to marked simplices of 
$W_{\cP,\ell+1}^{(p,t+1)}$ (resp.~$W_{\cP,\ell+1}^{(p,t)}$), validating the existence of the desired diagram of marked simplicial sets.

Moreover, the lower square can be used to build a pushout of marked simplicial sets
\[
\begin{tikzcd}
  \coprod_{\gamma}\Sigma^p\Delta^{t+1}[\ell+2-p]'\arrow[r]\arrow[d,"\coprod_{\gamma}\widetilde{\gamma}"swap]  &\coprod_{\gamma}\Sigma^p\Delta^{t+1}[\ell+2-p]''\arrow[d,"\coprod_{\gamma}\widetilde\gamma"]\\
W_{\cP,\ell+1}^{(p,t+1)} \arrow[r]&W_{\cP,\ell+1}^{(p,t)}
\end{tikzcd}
\]
indexed over all $(\ell+1)$-simplices $\gamma$ of $N\cP$ of depth $p$ and type $t$. Since the top map is an acyclic cofibration in $m\sset_{\mathrm{cmp,sat}}$ (by \cref{IteratedQuillenAdjunctionMarked}), then so is the bottom one, as desired.
\end{proof}

We can now conclude with the main results.

\begin{prop}
\label{NEacyclicfibrant}
For every contractible $\omega$-category $\cP$ the map of marked simplicial sets \[\mathrm{th}_0N^{\mathrm{RS}}\cP\to\Delta[0]\] is an acyclic fibration in $m\sset_{\mathrm{cmp,sat}}$.
\end{prop}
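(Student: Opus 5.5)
We will show directly that $\mathrm{th}_0N^{\mathrm{RS}}\cP=N^\sharp\cP\to\Delta[0]$ has the right lifting property with respect to all monomorphisms of marked simplicial sets. Since the cofibrations of $m\sset_{\mathrm{cmp,sat}}$ are exactly the monomorphisms (\cref{ModelStructuresMarked}), this identifies the map as an acyclic fibration. The class of monomorphisms is generated by two kinds of maps: the boundary inclusions $\partial\Delta[m]\hookrightarrow\Delta[m]$ for $m\geq0$, and the entire maps $\Delta[m]\to\Delta[m]_t$ for $m>0$. It therefore suffices to solve lifting problems against these two families.

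\textbf{Step 1: entire maps.} Every simplex of positive dimension in $N^\sharp\cP=\mathrm{th}_0N^{\mathrm{RS}}\cP$ is marked. Hence any map $\Delta[m]\to N^\sharp\cP$ with $m>0$ automatically extends along $\Delta[m]\to\Delta[m]_t$. So lifting against the entire maps is immediate.

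\textbf{Step 2: boundary inclusions.} Because $N^\sharp\cP$ is maximally marked, a marked-simplicial-set map $\Delta[m]\to N^\sharp\cP$ is the same as a map of underlying simplicial sets $\Delta[m]\to N\cP$. The marking on the source does not matter. So a lifting problem of $\partial\Delta[m]\hookrightarrow\Delta[m]$ against $N^\sharp\cP\to\Delta[0]$ is equivalent to the corresponding lifting problem of simplicial sets against $N\cP$. By \cref{NPcontractibleKan}, $N\cP$ is a contractible Kan complex, and these lifting problems are exactly those solved in its proof (via \cref{E:Parallel}). The case $m=0$ requires $\cP$ to be nonempty, which follows from contractibility, since $\cP$ lifts against $\varnothing=\partial\cC[0]\hookrightarrow\cC[0]$.

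The one step needing care is justifying that these two families generate all monomorphisms of $m\sset$. This is standard: every monomorphism is obtained by first attaching new simplices with the minimal marking via pushouts of boundary inclusions, then marking the additional simplices via pushouts of $\Delta[m]\to\Delta[m]_t$. Alternatively, one can avoid the generation question entirely. Right lifting against all monomorphisms follows directly, because given any mono $A\hookrightarrow B$ and a map $A\to N^\sharp\cP$, we extend the underlying simplicial map using that $N\cP\to\Delta[0]$ is a trivial Kan fibration (\cref{NPcontractibleKan}), and markings are automatically preserved since the target is maximally marked.

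Note that fibrancy needs no separate argument. A map with the right lifting property against all cofibrations is by definition an acyclic fibration in any model structure whose cofibrations are the monomorphisms.
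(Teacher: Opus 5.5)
Your proof is correct, and it takes a different route from the paper's. The paper argues homotopically. It uses \cref{NPcontractibleKan} only to know that $N\cP\to\Delta[0]$ is a Kan--Quillen weak equivalence. It then invokes \cite[Lemma~1.27]{BOR} to get that $\mathrm{th}_0N\cP\to\Delta[0]$ is a weak equivalence in $m\sset_{(\infty,0)}$. Finally it transports this along the left Quillen functor $\mathrm{th}_0\colon m\sset_{(\infty,0)}\to m\sset_{\mathrm{cmp,sat}}$ of \cref{TruncationsQuillenMarked}\ref{QPnleft}, which is legitimate because every object is cofibrant.

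You instead extract from the proof of \cref{NPcontractibleKan} the stronger fact that $N\cP\to\Delta[0]$ is a trivial Kan fibration. You then use the adjunction $U\dashv(-)^\sharp$ and the identification $\mathrm{th}_0N^{\mathrm{RS}}\cP=N^\sharp\cP$ to obtain the right lifting property against every monomorphism of marked simplicial sets.

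Your route has several advantages:
\begin{itemize}
\item It is elementary and self-contained, with no appeal to \cite{BOR} or to the Quillen pairs.
\item It genuinely establishes the fibration half of the statement. The paper's proof only concludes a weak equivalence, which is all that \cref{LnEcontractible} needs.
\item It shows at once that the map is an acyclic fibration in every model structure of \cref{ModelStructuresMarked}, since they all have the monomorphisms as cofibrations.
\end{itemize}

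The paper's route, in turn, only uses weak contractibility of the underlying simplicial set. It would therefore show that $X^\sharp\to\Delta[0]$ is a weak equivalence for any weakly contractible simplicial set $X$, Kan or not. That extra generality is not needed here, because $N\cP$ is already a contractible Kan complex.
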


\begin{proof}
By \cref{NPcontractibleKan}, the map of simplicial sets $N\cP\to \Delta[0]$ is a weak equivalence in the Kan--Quillen model structure. By \cite[Lemma~1.27]{BOR},
the map of simplicial sets $\mathrm{th_0}N\cP\to \Delta[0]$ is a weak equivalence in the model structure $m\sset_{(\infty,0)}$. By \cref{TruncationsQuillenMarked}\ref{QPnleft}, the map of marked simplicial sets  $\mathrm{th_0}N\cP\to \Delta[0]$ is a weak equivalence in the model structure $m\sset_{\mathrm{cmp,sat}}$, concluding the proof.
\end{proof}

\begin{thm}
\label{LnEcontractible}
For every contractible $\omega$-category $\cP$ and for all $n\geq0$ the map of marked simplicial sets
\[\mathrm{th}_n N^{\mathrm{RS}}\cP\to\Delta[0]\]
is a weak equivalence in $m\sset_{\mathrm{cmp,sat}}$.
\end{thm}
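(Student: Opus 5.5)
The plan is to combine \cref{theorem} with \cref{NEacyclicfibrant} by a finite induction, using two-out-of-three for weak equivalences in $m\sset_{\mathrm{cmp,sat}}$.

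For each $n\geq0$ there is a canonical chain of entire inclusions
\[
\mathrm{th}_nN^{\mathrm{RS}}\cP\hookrightarrow\mathrm{th}_{n-1}N^{\mathrm{RS}}\cP\hookrightarrow\dots\hookrightarrow\mathrm{th}_1N^{\mathrm{RS}}\cP\hookrightarrow\mathrm{th}_0N^{\mathrm{RS}}\cP,
\]
obtained by successively marking more simplices. By \cref{theorem}, applied with $\ell=n-1,\dots,0$, each step $\mathrm{th}_{\ell+1}N^{\mathrm{RS}}\cP\hookrightarrow\mathrm{th}_\ell N^{\mathrm{RS}}\cP$ is a weak equivalence in $m\sset_{\mathrm{cmp,sat}}$; the case $\ell=0$ is also \cref{InductionBase}. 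Since weak equivalences are closed under composition, the composite inclusion $\mathrm{th}_nN^{\mathrm{RS}}\cP\hookrightarrow\mathrm{th}_0N^{\mathrm{RS}}\cP$ is a weak equivalence. For $n=0$ this composite is just the identity.

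Next, the map $\mathrm{th}_nN^{\mathrm{RS}}\cP\to\Delta[0]$ factors as this composite inclusion followed by $\mathrm{th}_0N^{\mathrm{RS}}\cP\to\Delta[0]$. By \cref{NEacyclicfibrant}, the second map is an acyclic fibration, in particular a weak equivalence. Hence the composite $\mathrm{th}_nN^{\mathrm{RS}}\cP\to\Delta[0]$ is a weak equivalence, which proves the statement.

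No step here should be a real obstacle, since all the substance lies in \cref{theorem} and \cref{NEacyclicfibrant}. The only points to check are bookkeeping. First, the inclusions in the chain are the ones referred to in \cref{theorem}, namely identity maps on underlying simplicial sets. Second, $\mathrm{th}_0N^{\mathrm{RS}}\cP$ coincides with $\mathrm{th}_0N\cP=N^\sharp\cP$, the object to which \cref{NEacyclicfibrant} applies, because thinifying in degree $0$ marks every positive-dimensional simplex regardless of the original marking.
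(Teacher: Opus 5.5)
Your proposal is correct and matches the paper's proof. The paper simply cites two-out-of-three in $m\sset_{\mathrm{cmp,sat}}$ together with \cref{theorem} and \cref{NEacyclicfibrant}; you have spelled out the chain of entire inclusions $\mathrm{th}_nN^{\mathrm{RS}}\cP\hookrightarrow\dots\hookrightarrow\mathrm{th}_0N^{\mathrm{RS}}\cP=N^\sharp\cP$ that this one-line argument implicitly uses.
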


\begin{proof}
This follows from the 2-out-of-3 property for weak equivalences in $m\sset_{\textrm{cmp,sat}}$ combined with \cref{theorem,NEacyclicfibrant}.
\end{proof}

Combining \cref{NEfibrant,NEnotContractible,LnEcontractible}, we established the following:
\begin{thm}
\label{CompleteTheorem}
If $\cE$ denotes the walking coinductive equivalence from \cite{ORsurvey,HLOR}, then the following hold:
\begin{enumerate}
  \setcounter{enumi}{-1}
    \item$N^{\mathrm{RS}}\cE$ is a saturated complicial set;
    \item $N^{\mathrm{RS}}\cE$ is not contractible in the homotopy theory of saturated complicial sets;
    \item for every $n\geq0$, the thinification $\mathrm{th}_nN^{\mathrm{RS}}\cE$ is contractible in the homotopy theory of saturated complicial sets.
\end{enumerate}
\end{thm}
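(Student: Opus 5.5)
The plan is to assemble \cref{CompleteTheorem} from the three results named before it, using \cref{E:exists}. That proposition says $\cE$ is a gaunt, contractible $\omega$-category with exactly two objects. These are exactly the hypotheses of the three ingredients.

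For item (0), apply \cref{NEfibrant} with $\cG=\cE$, which is allowed since $\cE$ is gaunt. It gives that $N^{\mathrm{RS}}\cE\to\Delta[0]$ is a fibration in $m\sset_{\mathrm{cmp,sat}}$. The fibrant objects of this model structure are precisely the saturated complicial sets (\cref{ModelStructuresMarked}), so $N^{\mathrm{RS}}\cE$ is a saturated complicial set.

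For item (1), argue by contradiction. Suppose $N^{\mathrm{RS}}\cE$ were contractible, meaning $N^{\mathrm{RS}}\cE\to\Delta[0]$ is a weak equivalence in $m\sset_{\mathrm{cmp,sat}}$. Then \cref{NEnotContractible}, applied to the gaunt $\omega$-category $\cE$, would force $\cE$ to be trivial, and in particular to have a single object. This contradicts the fact that $\cE$ has two objects.

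For item (2), apply \cref{LnEcontractible} with $\cP=\cE$, using that $\cE$ is contractible. It gives that $\mathrm{th}_nN^{\mathrm{RS}}\cE\to\Delta[0]$ is a weak equivalence in $m\sset_{\mathrm{cmp,sat}}$ for every $n\geq0$, which is the claimed contractibility.

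There is no real obstacle at this stage: all the substantive work is in \cref{theorem}, \cref{NEfibrant} and \cref{NEnotContractible}. The only point to state explicitly is the meaning of "contractible in the homotopy theory of saturated complicial sets": it is read as "the map to $\Delta[0]$ is a weak equivalence in $m\sset_{\mathrm{cmp,sat}}$". This reading is independent of fibrancy, so items (1) and (2) are statements of the same type as the cited results.
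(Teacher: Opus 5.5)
Your proposal is correct and takes the same route as the paper, which obtains the theorem by combining \cref{NEfibrant,NEnotContractible,LnEcontractible} for $\cE$, using that $\cE$ is gaunt, contractible and has two objects (\cref{E:exists}). For item (1), you also spell out a step the paper leaves implicit: a two-object $\omega$-category cannot be trivial.
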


\bibliographystyle{amsalpha}
\bibliography{Ref}

\end{document}